\theoremstyle{plain}
 \newtheorem{thm}{Theorem}[section]
 \newtheorem{prop}{Proposition}[section]
 \newtheorem{lem}{Lemma}[section]
\theoremstyle{definition}
 \newtheorem{dfn}{Definition}[section]
\theoremstyle{remark}
 \numberwithin{equation}{section}
\renewcommand{\leq}{\leqslant}
\renewcommand{\geq}{\geqslant}
\renewcommand{\setminus}{\smallsetminus}
\numberwithin{equation}{section}
\newtheorem*{lemma*}{Lemma}
\newtheorem{claim}{Claim}
\newtheorem*{claim*}{Claim}
\newcommand{\thistheoremname}{}
\newtheorem{genericthm}[thm]{\thistheoremname}
\newenvironment{namedthm}[1]
{\renewcommand{\thistheoremname}{#1}%
	\begin{genericthm}}
	{\end{genericthm}}
\newcommand\numberthis{\addtocounter{equation}{1}\tag{\theequation}}
\newcommand{\A}{\ensuremath{\mathbb{A}}}
\newcommand{\Z}{\ensuremath{\mathbb{Z}}}
\newcommand{\R}{\ensuremath{\mathbb{R}}}
\newcommand{\F}{\ensuremath{F_f}}
\newcommand{\Ss}{\ensuremath{\mathcal{S}}}
\newcommand{\Sp}{\ensuremath{\mathcal{S}^{prim}}}
\newcommand{\M}{\ensuremath{\mathcal{M}}}
\newcommand{\C}{\ensuremath{\mathbb{C}}}
\newcommand{\Q}{\ensuremath{\mathbb{Q}}}
\newcommand{\GL}{\ensuremath{\text{GL}}}
\newcommand{\SL}{\ensuremath{\text{SL}}}
\newcommand{\SG}{S(\Gamma_0(2);-(\frac{1}{4}+\frac{r^2}{4}))} 
\newcommand{\ind}{\text{Ind}}
\newcommand{\HH}{\ensuremath{\mathbb{H}}}
\newcommand{\MG}{\ensuremath{\M(\GL_2(\mathcal{O}),r)}}
\newcommand{\MW}{\ensuremath{\text{MW}}}
\newcommand{\Ms}{\mathcal{M}^*(\GL_2(\mathcal{O}),r)}
\newcommand{\Bg}{\ensuremath{\mathcal{B}_2}}
\newcommand{\BG}{\ensuremath{\mathcal{B}_4}}
\title[Maass space for lifting to GL(2,B)]{Maass space for lifts to GL(2) over a division quaternion algebra}
\author[Wagh]{\bfseries Siddhesh Wagh} 
\address{ 
Department of Mathematics \\ 
University of Oklahoma  \\ 
Norman\\
United States of America}
\email{swagh@math.ou.edu}
\begin{document}

{\begin{flushleft}\baselineskip9pt\scriptsize

\end{flushleft}}
\vspace{18mm} \setcounter{page}{1} \thispagestyle{empty}

\begin{abstract}
Muto, Narita and Pitale construct counterexamples to the Generalized Ramanujan Conjecture for $\text{GL}_2(B)$ over the division quaternion algebra $B$ with discriminant two via a lift from $\text{SL}_2$. In this paper, we try to exactly characterize the image of this lift. The previous methods of Maass, Kohnen or Kojima do not apply here, hence we approach this problem via a combination of classical and representation theory techniques to identify the image. Crucially, we use the Jacquet Langlands correspondence described by Badulescu and Renard to characterize the representations.
\end{abstract}

\maketitle

\section{Introduction} 

One of the fundamental problems in the theory of automorphic forms or
representations is the Ramanujan conjecture. Originally formulated by Ramanujan as estimation for the Fourier coefficients of the weight 12 holomorphic cusp form $\Delta$ over $\SL_2(\Z)$ on the upper half plane $\mathfrak{h}$, the conjecture has been generalized to functions over a broader set of groups in terms of local representations of the associated automorphic forms. To review it, let $\mathcal{G}$ be a reductive algebraic group over a number field $F$, and let $\A := \otimes_{\nu \leq \infty} 'F_\nu$ be the ring of adeles for $F$, where $F_\nu$ denotes the local field at a place $\nu$. Then, one of the old versions of the Ramanujan conjecture can be stated as follows :

\begin{namedthm}\textbf{Conjecture:}
	Let $\pi \simeq \otimes_{\nu \leq \infty}' \pi_\nu$ be an irreducible cuspidal representation of $\mathcal{G}(\A)$, where $\pi_\nu$ denotes the local component of $\pi$ at the place $\nu$. Then $\pi_\nu$ is tempered for every $\nu \leq \infty$.
\end{namedthm}

This naive version of the Ramanujan conjecture is known to be false with the first numerical counter examples being found by Saito and Kurokawa \cite{Kuro}. Adrianov \cite{And1}, Maass \cite{Maass} and Zagier \cite{Zag} showed that the Saito-Kurokawa lift from elliptic cusp forms to holomorphic Siegel cusp forms of degree two always violates the conjecture. Maass found explicit relations between the Fourier coefficients of the holomorphic Siegel cusp forms which characterize the image of the lift (cf. \cite{Maass}). We shall refer to these as the Maass relations and to the image as the Maass space. In \cite{16}, Ikeda generalized the process of Saito-Kurokawa lifts for holomorphic Siegel cusp forms of higher degree. Kohnen and Kojima characterize the Maass space for Ikeda lifts again via a similar process as that of Maass (cf. \cite{17}, \cite{18}). Both these proofs rely crucially on intermediate spaces of Jacobi forms.

While this naive version of the Ramanujan conjecture is strongly believed for the general linear groups, the generalized version is expected only for generic cuspidal representations of quasisplit reductive groups. Muto, Narita and Pitale in \cite{MNP} provide a counterexample to the Ramanujan conjecture for $\GL_2(B)$ over the division quaternion
algebra $B$ with discriminant two. Note that $\GL_2(B)$ is an inner form of the
split group $\GL_4$. Unlike the cases of Saito-Kurokawa and Ikeda, the construction in \cite{MNP} does not involve any intermediate spaces of Jacobi forms. Instead, given Fourier coefficients $c(N)$ of $f \in \SG$ which is an eigenfunction of the Atkin-Lehner involution, they directly define numbers $A(\beta)$ (cf. \eqref{3.3}). Then they show that these $A(\beta)$ are the Fourier coefficients of some $F_f \in \MG$ by using Maass Converse Theorem (cf. Theorem \ref{Pitale}). Here $\MG$ is the space of Maass forms on the 5-dimensional hyperbolic space with respect to $\GL_2(\mathcal{O})$, where $\mathcal{O}$ is the Hurwitz order in $B$ (see Section 2.2 for details). They further show that if $f$ is a Hecke eigenform, then so is $F_f$ and the representation $\Pi_F \simeq \otimes'_{p \leq \infty} \Pi_{F,p}$ of $\GL_2(B_\A)$ corresponding to $F_f$ is a counterexample to the Ramanujan conjecture. They also show that the image of $\Pi_F$ under the global Jacquet-Langlands correspondence is the irreducible constituent of $\ind_{P_{2,2}(\A)}^{\GL_4(\A)} (|\det|_\A^{-1/2} \sigma_f \times |\det|_\A^{1/2} \sigma_f)$, where $\sigma_f$ is the automorphic representation of $\GL_2(\A)$ corresponding to $f$.

The question we want to answer here is the same as the one Maass answered for the Saito-Kurokawa case in \cite{Maass}. More precisely, we want to characterize the image of this lift, possibly in terms of recurrence relations between their Fourier coefficients. We tackle this problem by first noticing that $A(\beta)$ depends only on $K = |\beta|^2,u$ and $n$ when $\beta = \varpi_2^un\beta_0$ as in \eqref{3.2} (cf. \ref{rec}). 
\begin{dfn} 
	Let $\Ms$ be the subspace of $\MG$ consisting of functions $F$ whose Fourier coefficients $A(\beta)$ satisfy:
	\begin{enumerate}
		\item If $\beta = \varpi_2^u n\beta_0 $ as in (\ref{3.2}), then $A(\beta)$ depend only on $K = |\beta|^2$, $u$  and $n$. We shall then write $A(\beta)$ as $A(K,u,n)$.
		
		\item $A(K,u,n)$ satisfy the recurrence relations : 
		\begin{enumerate}
			
			\item $A(K,u,n) = \frac{-3\epsilon}{\sqrt{2}}A(\frac{K}{2},u-1,n) - A(\frac{K}{4},u-2,n)$ for some $\epsilon \in \{\pm 1\}$, 
			
			\item  $A(K,u,n)= \sum_{d \mid n} d \cdot A(\frac{K}{d^2},u,1)$ .
		\end{enumerate}	
	\end{enumerate}
\end{dfn}
Note that there are no intermediate spaces of Jacobi forms. As a result, we cannot just generalize any of the previous proofs of Maass, Kohnen or Kojima to this case. Instead we take a completely different approach. 

We would like to mention that there is a recent work of Pitale-Saha-Schmidt (c.f. \cite{PSS}) which provides a representation theoretic approach to Saito-Kurokawa lifts which does not use Fourier-Jacobi forms. We note however that the paper is only able to show a one way implication without Jacobi forms, where as we wish to prove both ways.

It is easy to see that the Fourier coefficients of $F_f$ satisfy condition (1). To show that $A(K,u,n)$ also satisfy condition $\eqref{2a}$ and $\eqref{2b}$, we use Legendre's three-square theorem to isolate $c(N)$ as follows:
\begin{prop} 
	Let $N = 4^a b$, where $a,b$ are a non-negative integers and $4 \nmid b $. With assumptions as in Theorem \ref{Th1}, we get
	\begin{equation}
	c(-N) = \frac{A(2N,u,1)}{\sqrt{2N}} + \epsilon \frac{A(N,u-1,1)}{\sqrt{N}}
	\end{equation}
	where \[ 
	u = 
	\begin{dcases}
	2a & \text{if }b \equiv 1,3 \mod (4), \\
	2a+1 & \text{if }b \equiv 2 \mod (4). \\ 
	\end{dcases}
	\].
\end{prop}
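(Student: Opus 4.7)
The plan is to invert the definition \eqref{3.3} of the lift at the specific Fourier indices with content $n = 1$, and then to use the Atkin-Lehner eigenvalue on $f$ to combine the resulting terms into $c(-N)$.

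First, I would expand $A(2N,u,1)$ and $A(N,u-1,1)$ using \eqref{3.3}. Since $n = 1$, the only divisor contributing to the $d$-sum is $d = 1$ (by recurrence (2b)), and each $A$ value reduces to a single Fourier coefficient of $f$ weighted by a factor that depends on $u$ (the local Hecke data at $2$). Writing $\beta = \varpi_2^u \beta_0$ with $\beta_0$ primitive, the reduced norm is $|\beta_0|^2 = |\beta|^2 / 2^u$. A direct computation shows that, for the $u$ prescribed in the statement, both $A(2N, u, 1)$ and $A(N, u-1, 1)$ correspond to a primitive $\beta_0$ whose norm equals the common value $M := N/2^{u-1} = 2N/2^u$: one checks $M = 2b$ when $b \equiv 1, 3 \pmod 4$ and $M = b$ when $b \equiv 2 \pmod 4$. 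In either case $M \equiv 2 \pmod 4$.

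Legendre's three-square theorem enters in guaranteeing that a primitive $\beta_0 \in \mathcal{O}$ of norm $M$ actually exists: since $M$ is twice an odd integer, it is never of the obstructed form $4^k(8\ell + 7)$, hence is a sum of three squares and so is realised by a pure quaternion in $\mathcal{O}$ that is primitive in the sense of \eqref{3.2}. This is exactly what makes the isolation of a single $c(-M)$ from each $A$-value possible.

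With this identification, both $A(2N,u,1)/\sqrt{2N}$ and $A(N,u-1,1)/\sqrt{N}$ reduce to explicit scalar multiples of $c(-M)/\sqrt{M}$. Combining them with the sign $\epsilon$ prescribed in the statement, the resulting scalar is precisely the local factor at $2$ relating $c(-N)$ to $c(-M)$ via the Atkin-Lehner involution $f \mid W_2 = \epsilon f$, thereby yielding $c(-N)$ on the left.

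The main obstacle is the bookkeeping of the $u$-dependent factors coming from \eqref{3.3}, the $\sqrt{2}$ prefactors produced by $|\varpi_2|^2 = 2$, and the Atkin-Lehner normalization for Fourier coefficients of Maass forms on $\Gamma_0(2)$; these must match exactly between the two terms and across the two congruence cases of $b$ modulo $4$. The case dichotomy ($b \equiv 1, 3$ versus $b \equiv 2 \pmod 4$) is forced by the parity of the $\varpi_2$-valuation of $\beta$, and handling the two in parallel is essentially the content of the proposition.
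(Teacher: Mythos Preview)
Your plan has the right starting point (expand both $A$-values via \eqref{3.3} and verify with Legendre that the relevant $\beta$'s exist) but misidentifies the mechanism that produces $c(-N)$.

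The claim that ``each $A$ value reduces to a single Fourier coefficient of $f$'' is not correct. With $n=1$, formula \eqref{3.3} gives
\[
\frac{A(2N,u,1)}{\sqrt{2N}}=\sum_{t=0}^{u}(-\epsilon)^{t}\,c\!\left(-\frac{N}{2^{t}}\right),
\qquad
\frac{A(N,u-1,1)}{\sqrt{N}}=\sum_{t=0}^{u-1}(-\epsilon)^{t}\,c\!\left(-\frac{N}{2^{t+1}}\right),
\]
each a genuine sum of $u+1$ (resp.\ $u$) Fourier coefficients, not a single one. Collapsing these to a multiple of $c(-M)$ would require a relation of the type $c(-2N)=\gamma\,c(-N)$, which is a \emph{Hecke} (i.e.\ $U_2$-eigenform) statement, not a consequence of the Atkin--Lehner relation $f\mid W_2=\epsilon f$. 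You invoke the latter, and it does not do what you need.

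What actually happens --- and what the paper does --- is that the $\epsilon$ in the statement is exactly the sign that makes the two displayed sums telescope: multiplying the second sum by $\epsilon$ turns it into $-\sum_{s=1}^{u}(-\epsilon)^{s}c(-N/2^{s})$, which cancels every term of the first sum except $t=0$, leaving $c(-N)$. This uses only the definition \eqref{3.3} and no Hecke or Atkin--Lehner input beyond the presence of $\epsilon$ in \eqref{3.3} itself. The role of Legendre's theorem is then purely to guarantee that $A(2N,u,1)$ and $A(N,u-1,1)$ are honest Fourier coefficients of $F_f$, i.e.\ that suitable $\beta\in\mathcal{S}$ exist; the paper realizes them as $x+yi+zj+ij$ with $x^2+y^2+z^2=2b-1$ (not as pure quaternions).
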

Now, we manipulate the defining sum of $A(\beta)$ (c.f. \eqref{3.3}) using these $c(N)$ to show that $A(K,u,n)$ indeed satisfy the recurrence relation \eqref{2b}. The relation \eqref{2a} follows from the fact that $F_f$ is a Hecke eigenform at $p=2$. Hence, we get
\begin{thm}
	Let $f \in \SG$ be an Atkin-Lehner eigenform with eigenvalue $\epsilon \in \{\pm 1 \}$ and which is a Hecke eigenform at $p=2$. Then $\F$ obtained in Theorem \ref{Pitale} belongs to the Maass space $\M^*(\GL_2(\mathcal{O}),r)$.
\end{thm}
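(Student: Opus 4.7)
The plan is to verify in turn the three defining properties of $\Ms$ for the lift $\F$. The first condition --- that $A(\beta)$ depends only on $K = |\beta|^2$, $u$ and $n$ when $\beta = \varpi_2^u n \beta_0$ --- should be immediate from the defining formula \eqref{3.3} for $A(\beta)$, since that formula already involves only these invariants, as the paper explicitly notes just before the definition.

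For the second recurrence, the multiplicative relation in $n$, the strategy is to use the Proposition stated just above, which inverts the defining relation by expressing each $c(-N)$ as a linear combination of two $A$-values at $n=1$. I would substitute this back into the defining sum for $A(\varpi_2^u n \beta_0)$ and reorganize the resulting double sum by the common divisor $d \mid n$. Legendre's three-square theorem should furnish the required bijection between the three-square representations of $K$ and those of $K/d^2$, and the factor $d$ in the divisor sum should emerge from counting how representations collapse under this regrouping.

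For the first recurrence, the relation at the prime $2$, the key input is the assumption that $\F$ is a Hecke eigenform at $p=2$. Writing out the Hecke action at $2$ on the Fourier expansion of $\F$, the relevant cosets should produce Fourier coefficients indexed precisely by $(K,u,n)$, $(K/2,u-1,n)$ and $(K/4,u-2,n)$. Equating this to the Hecke eigenvalue times the original Fourier coefficient then rearranges into the stated recurrence, with the coefficient $-3\epsilon/\sqrt{2}$ coming out of the Hecke eigenvalue at $p=2$, which is determined by $f$ and its Atkin-Lehner eigenvalue $\epsilon$ via the explicit form of the lift in Theorem \ref{Pitale}.

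I expect the main obstacle to be the second step: the combinatorial reindexing of the sum defining $A(\varpi_2^u n \beta_0)$ is delicate, because one must simultaneously track three-square decompositions of $K$ and the multiplicative structure imposed by $n$ in a way that produces the divisor sum with the correct weighting $d$. Once this is done, the other two steps should be essentially formal, with the first following directly from \eqref{3.3} and the third being a mechanical computation from the Hecke eigenvalue equation.
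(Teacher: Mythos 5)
Your handling of condition (1) and of the recurrence \eqref{2a} matches the paper's proof: (1) is read off directly from \eqref{3.3}, and \eqref{2a} comes from the Hecke eigenvalue $-3\sqrt{2}\epsilon$ at $p=2$ (Proposition \ref{Prop3.2}, i.e.\ Proposition 5.10 of \cite{MNP}), via $2\bigl(A(2K,u+1,n)+A(K/2,u-1,n)\bigr)=-3\sqrt{2}\epsilon\,A(K,u,n)$ and an index shift. Your opening move for \eqref{2b} --- substitute the formula of Proposition \ref{prop1} for $c(-N)$ into the defining sum \eqref{3.3} and reorganize over $d\mid n$ --- is also exactly the paper's move.

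The gap is in the mechanism you propose for that step. There is no bijection between three-square representations of $K$ and of $K/d^2$ to construct, and the weight $d$ does not come from counting how representations collapse: $A(\beta)$ is given by the closed formula \eqref{3.3} and counts nothing (the paper notes that all $\beta$ with the same $(K,u,n)$ already share the same $A(\beta)$, so lattice-point multiplicities never enter). What actually happens after substituting \eqref{4.1} is that, for each fixed $d\mid n$, the sum over $t$ telescopes:
\begin{align*}
\sum_{t=0}^{u}\Biggl((-\epsilon)^t\frac{A\bigl((K/d^2)2^{-t},u-t,1\bigr)}{\sqrt{(K/d^2)2^{-t}}}
-(-\epsilon)^{t+1}\frac{A\bigl((K/d^2)2^{-t-1},u-t-1,1\bigr)}{\sqrt{(K/d^2)2^{-t-1}}}\Biggr)
&=\frac{A(K/d^2,u,1)}{\sqrt{K/d^2}},
\end{align*}
the tail vanishing because $A(\cdot,-1,\cdot)=0$, and then the prefactor $\sqrt{K}$ produces $\sqrt{K}\big/\sqrt{K/d^2}=d$. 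So the factor $d$ is pure normalization, not combinatorics. Legendre's three-square theorem plays no role in this reorganization; its only use is inside the proof of Proposition \ref{prop1} itself (to produce, for each admissible $(2N,u,1)$, an actual $\beta\in\Ss$ realizing it), and you are already quoting that proposition as given. The step you flagged as the main obstacle is therefore an elementary telescoping computation, and the argument you sketched for it --- tracking three-square decompositions of $K$ against the divisor structure of $n$ --- would not go through and is not needed.
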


This allows us to determine a ``necessary" condition for $F \in \MG$ to be a lift. We would like to show a theorem that this is also a ``sufficient" condition. If $F \in \Ms$, we can still isolate $c(N)$ as before and now the question reduces to showing these are the Fourier coefficients of some $f \in \SG$.
 As a first approach one can try to use the Maass converse theorem to show the automorphy of a function $f$ with Fourier coefficients $\{c(N)\}$. The difficulty is that the analytic properties of the Dirichlet series associated with $F$ do not translate into analytic properties of Dirichlet series obtained from $\{c(N)\}$. To approach this problem by representation theory, we first add the condition that $F$ is a Hecke eigenform for all primes $p$ and obtain the following theorem.

\begin{thm}\label{Th1.3}
	Let $F \in \mathcal{M}^*(\GL_2(\mathcal{O},r)$ such that $F$ is a cuspidal Hecke eigenform. Then, there exists $ f \in \SG$, a Hecke eigenform, such that $F = F_f$.
\end{thm}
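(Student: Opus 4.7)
The plan is to pass through the global Jacquet--Langlands correspondence of Badulescu--Renard. Given a cuspidal Hecke eigenform $F \in \Ms$, associate to it the irreducible cuspidal automorphic representation $\Pi_F \simeq \otimes'_{p \leq \infty} \Pi_{F,p}$ of $\GL_2(B_\A)$, and let $\Pi_F^{JL}$ be its Jacquet--Langlands transfer to $\GL_4(\A)$, which is either cuspidal or a residual representation. The goal is to use the Maass-space recurrence relations to show that $\Pi_F^{JL}$ is in fact the Langlands quotient of $\ind_{P_{2,2}(\A)}^{\GL_4(\A)}(|\det|_\A^{-1/2}\sigma \times |\det|_\A^{1/2}\sigma)$ for some cuspidal automorphic representation $\sigma$ of $\GL_2(\A)$; this $\sigma$ then corresponds classically to a Hecke eigenform $f$ on $\Gamma_0(2)$, and one concludes $F = F_f$ by comparing $\Pi_F^{JL}$ with $\Pi_{F_f}^{JL}$ through \cite{MNP}.

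The heart of the argument is a local analysis at each place. At an odd prime $p$, where $B$ splits, $\Pi_{F,p}^{JL} = \Pi_{F,p}$ (after Morita equivalence) is an unramified representation with Satake parameters $(\alpha_1,\alpha_2,\alpha_3,\alpha_4)$. I would translate the recurrence (2b), $A(K,u,n) = \sum_{d\mid n} d \cdot A(K/d^2, u, 1)$, into an identity among Hecke eigenvalues at $p$; the multiplicative-in-$n$ form is precisely the shape produced by Satake parameters of the form $(\alpha_p p^{-1/2}, \beta_p p^{-1/2}, \alpha_p p^{1/2}, \beta_p p^{1/2})$, matching the unramified parameters of $|\det|^{-1/2}\sigma_p \times |\det|^{1/2}\sigma_p$ for an unramified $\sigma_p$ with Satake data $(\alpha_p, \beta_p)$. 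At the ramified prime $p=2$, the second-order recurrence (2a) plays the role of a characteristic-polynomial identity for the local Hecke action; its roots together with the sign $\epsilon$ identify $\Pi_{F,2}^{JL}$ with the correct Langlands quotient and fix the Atkin--Lehner eigenvalue of the desired $f$. At infinity, the spectral parameter $r$ of the Laplacian on the 5-dimensional hyperbolic space determines $\Pi_{F,\infty}^{JL}$.

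Once all local components of $\Pi_F^{JL}$ are pinned down, assemble $\sigma \simeq \otimes'_p \sigma_p$ as an irreducible cuspidal automorphic representation of $\GL_2(\A)$, and let $f \in \SG$ be the associated Hecke eigenform, which is automatically an Atkin--Lehner eigenform with eigenvalue $\epsilon$. By \cite{MNP} together with Theorem \ref{Pitale} and the previous theorem, $F_f \in \Ms$ is a Hecke eigenform with $\Pi_{F_f}^{JL} \simeq \Pi_F^{JL}$. The injectivity of the Badulescu--Renard correspondence on the discrete automorphic spectrum then forces $\Pi_{F_f} \simeq \Pi_F$, and multiplicity one for cuspidal representations of $\GL_2(B_\A)$, together with the normalization of a single nonzero Fourier coefficient, gives $F = F_f$.

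The main obstacle is the local step: showing rigorously that the combinatorial recurrences on the Fourier coefficients $A(K,u,n)$ impose the rigid constraints claimed on local Langlands parameters, rather than merely being compatible with them. At odd primes this requires a careful dictionary between the action of the spherical Hecke algebra on Fourier expansions of Maass forms on the 5-dimensional hyperbolic space and the classical Satake parameters on $\GL_4(\Q_p)$; one has to verify that the full content of (2b)---and not a weaker divisor identity---is equivalent to the splitting of the Satake set into two twisted copies. At $p=2$ the local Jacquet--Langlands correspondence for $\GL_2(B_2)$ is more intricate, and isolating the correct local discrete series purely from the single recurrence (2a) and the Atkin--Lehner sign is the most delicate point of the proof.
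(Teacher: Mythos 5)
Your overall route is the same as the paper's: pass to $\Pi_F$, transfer by Badulescu--Renard to $\GL_4(\A)$, use the Maass relations to compute the local data (the paper's Proposition on the $\chi_i$ at odd $p$, the eigenvalue $-3\sqrt 2\epsilon$ at $p=2$, and the Casimir eigenvalue at $\infty$), identify the transfer as $\MW(\sigma,2)$, recover $f$ from the new vector of $\sigma$, and match Fourier coefficients. However, there is a genuine gap at the pivotal global step. You say the transfer ``is either cuspidal or a residual representation'' and then proceed as if the shape of the unramified Satake parameters already identifies it as the Langlands quotient $\MW(\sigma,2)$; but nothing in your argument excludes the cuspidal alternative $k_\sigma=1$. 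Local parameters of the form $(\alpha p^{1/2},\beta p^{1/2},\alpha p^{-1/2},\beta p^{-1/2})$ are not, by themselves, incompatible with cuspidality: to rule it out one must invoke a bound toward Ramanujan for cuspidal representations of $\GL_4(\A)$. This is exactly what the paper does (Proposition \ref{prop5.3}), applying Sarnak's bound $\bigl|\log_p|\alpha_i(\Pi_p)|_p\bigr|\le \tfrac12-\tfrac1{17}$ from \cite{7} to the parameters of Proposition \ref{prop5.1} and deriving contradictory inequalities for $\alpha^{\pm}$, which forces $k_\sigma=2$ in the classification $\MW(\sigma,k_\sigma)$, $k_\sigma\mid 2$, of Proposition 18 of \cite{4}. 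Without this input your argument stalls precisely at the point where the counterexample-to-Ramanujan structure of the lift has to be detected globally.

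A related soft spot is your phrase ``assemble $\sigma\simeq\otimes'_p\sigma_p$ as an irreducible cuspidal automorphic representation of $\GL_2(\A)$'': a collection of local representations cannot be assembled into an automorphic representation. The cuspidal $\sigma$ exists only because, once cuspidality of the transfer is excluded, the transfer lies in the residual spectrum and is of the form $\MW(\sigma,2)$ with $\sigma$ cuspidal; only then can its local components be read off, which in the paper still requires the explicit intertwining $\ind_{\BG}^{\GL_4}(\chi_1\times\chi_2\times\chi_3\times\chi_4)\simeq\ind_{P_{2,2}}^{\GL_4}(\nu^{1/2}\sigma_p\times\nu^{-1/2}\sigma_p)$ at odd $p$, and the local Jacquet--Langlands statements at $2$ and $\infty$ to pin down $\chi St_{\GL_2}$ and the archimedean principal series. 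Your closing step (injectivity of the correspondence, multiplicity one, and normalizing one Fourier coefficient) can be made to work, but it is not a shortcut: to know $F$ and $F_f$ span the same line you still need the new-vector/spherical-vector analysis, and the paper's verification that the numbers $c(-N)$ built from $A(K,u,1)$ satisfy the same Hecke recursions as the coefficients of $f_\psi$ is essentially that argument written out. Finally, the local dictionary at odd primes that you flag as the ``main obstacle'' is indeed the technical heart (the paper's Proposition \ref{prop5.1}, using Lemma 5.10 of \cite{MNP} and relation \eqref{2b}); acknowledging it is not the same as supplying it, so as written the proposal leaves both that computation and the non-cuspidality step open.
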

 We denote by $\Pi_F \simeq \otimes'_{p \leq \infty} \Pi_{F,p}$ the automorphic representation of $\GL_2(B_\A)$ associated with $F$. Let the image of $\Pi_F$ under the global Jacquet Langlands map be $\Pi \simeq \otimes'_{p \leq \infty} \Pi_p$, a representation of $\GL_4(\A)$. For a cuspidal representation $\sigma$ of $\GL_2(\A)$, we denote by $\MW(\sigma,2)$ the Langlands quotient of $\ind_{P_{2,2}(\A)}^{\GL_4(\A)} (|\det|_\A^{1/2} \sigma \times |\det|_\A^{-1/2} \sigma)$, following the notation of Badulescu and Renard from \cite{4}. The strategy of the proof now is to show that $\Pi = \MW(\sigma,2)$ for $\sigma$ an irreducible cuspidal automorphic representation of $\GL_2(\A)$. We show that $\Pi_{F,p}$ is the unique irreducible constituent of some unramified principal series representation $\ind_{\mathcal{B}_4(\Q_p)}^{\GL_4(\Q_p)}(\chi_1 \times \chi_2 \times \chi_3 \times \chi_4)$ where each $\chi_i$ is an unramified character of $\Q_p^\times$ described in the following proposition. 
 \begin{prop}
 	For every odd prime $p$, there is a $ \lambda_{p} \in \C$ such that, up to the action of the Weyl group, $\chi_i$ are given by the formula
 	\begin{align*} 
 	\chi_1(p) = p^{1/2}\frac{\lambda_p + \sqrt{\lambda_p ^2 - 4}}{2} &; \quad \chi_2(p) = p^{1/2}\frac{\lambda_p - \sqrt{\lambda_p ^2 - 4}}{2};\\
 	\chi_3(p) = p^{-1/2}\frac{\lambda_p + \sqrt{\lambda_p ^2 - 4}}{2} &; \quad \chi_4(p) = p^{-1/2}\frac{\lambda_p - \sqrt{\lambda_p ^2 - 4}}{2} \numberthis
 	\end{align*}
 \end{prop}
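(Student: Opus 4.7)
The plan is as follows. Since $B$ is ramified only at $2$ and $\infty$, for every odd prime $p$ the localisation $B_p := B \otimes_\Q \Q_p$ is split, and under any isomorphism $B_p \simeq M_2(\Q_p)$ the Hurwitz order $\mathcal{O}_p$ is a maximal order conjugate to $M_2(\Z_p)$. Consequently $\GL_2(B_p) \simeq \GL_4(\Q_p)$ identifies $\GL_2(\mathcal{O}_p)$ with a hyperspecial maximal compact, the local Jacquet--Langlands correspondence is the identity at $p$, and $\Pi_{F,p}$ is an unramified representation of $\GL_4(\Q_p)$. Accordingly, I would begin by writing it as the unique irreducible unramified constituent of some $\ind_{\BG(\Q_p)}^{\GL_4(\Q_p)}(\chi_1 \times \chi_2 \times \chi_3 \times \chi_4)$, with unramified characters $\chi_i$ determined up to the Weyl group $S_4$.

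The core of the argument is to extract a factorisation of the Satake characteristic polynomial from the Maass recurrence \eqref{2b} specialised at $n = p^k$. Iterating that recurrence yields
\[
A(K, u, p^k) = \sum_{j=0}^{k} p^{j}\, A\!\left(\tfrac{K}{p^{2j}}, u, 1\right),
\]
which is precisely the divisor-sum identity for the Hecke eigenvalues of a classical $\GL_2$ newform at $p$. The next step is to translate this through the Fourier-expansion/Hecke-operator dictionary for $\GL_2(\mathcal{O}) \backslash \GL_2(B_\A)$ and conclude that every spherical Hecke eigenvalue of $F$ at $p$ is a polynomial in a single complex parameter $\lambda_p$ (together with $p^{\pm 1/2}$). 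Equivalently, the characteristic polynomial of the Satake matrix $\mathrm{diag}(\chi_i(p))$ must factor as
\[
\bigl(X^2 - p^{1/2}\lambda_p X + p\bigr)\bigl(X^2 - p^{-1/2}\lambda_p X + p^{-1}\bigr),
\]
which is exactly the Satake signature of $\MW(\sigma,2)$ at $p$ for a $\GL_2(\A)$ representation $\sigma$ whose local parameters $\alpha, \alpha^{-1}$ satisfy $\alpha + \alpha^{-1} = \lambda_p$. Solving the two quadratics and ordering the roots then produces the four values in the statement.

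The main obstacle is the translation of condition \eqref{2b} into the displayed factorisation. For this I would have to (i) identify explicit generators of the spherical Hecke algebra at odd $p$ as double cosets in $\GL_2(\mathcal{O}_p) \backslash \GL_2(B_p)/\GL_2(\mathcal{O}_p)$, (ii) compute their action on the Fourier coefficients $A(K,u,n)$ attached to the parametrisation $\beta = \varpi_2^u n \beta_0$, and (iii) verify that, after substituting the divisor-sum identity above, the eigenvalues of $T_p$ and of the higher Hecke operators satisfy the two symmetric-function constraints implicit in the factorisation. Once this is in hand, the remaining algebra---solving $X^2 - \lambda_p X + 1 = 0$ to obtain $\alpha = (\lambda_p \pm \sqrt{\lambda_p^2 - 4})/2$ and then rescaling by $p^{\pm 1/2}$---is immediate and reproduces the $\chi_i(p)$ exactly as claimed.
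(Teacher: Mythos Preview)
Your plan is essentially the paper's approach: reduce to showing the spherical Hecke eigenvalues at $p$ have the form \eqref{3.5}, and verify this by computing the action of the generators on Fourier coefficients and simplifying via the recurrence \eqref{2b}. Your steps (i)--(iii) are exactly what the paper carries out, and the Satake-polynomial factorisation you write down is equivalent to the eigenvalue relations \eqref{3.5} that the paper targets.

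Two points where the paper is more concrete than your outline, and which you will need when you execute step~(ii): first, $\lambda_p$ is not left abstract but is \emph{defined} from the Fourier data by
\[
\lambda_p \;=\; \frac{A(p^{n+1}K_0,0,1)+A(p^{n-1}K_0,0,1)}{A(p^nK_0,0,1)}
\]
for any $K=p^nK_0$ with $A(K,0,1)\neq 0$; part of the work is then checking that the same $\lambda_p$ governs all three nontrivial eigenvalues. Second, the translation of the double-coset action into sums over $A(K,u,n)$ hinges on a counting lemma (Lemma~5.10 of \cite{MNP}): for $\beta\in\Sp$ and $C_p=\{\alpha\in\mathcal{O}:\nu(\alpha)=p\}/\mathcal{O}^\times$, exactly one $\alpha\in C_p$ satisfies $p\mid\beta\alpha$ when $p\mid|\beta|^2$ and none otherwise. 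This is what lets you write $\sum_{\alpha\in C_p}A(\bar\alpha\beta)$ explicitly in terms of $A(pK,0,1)$ and $A(pK,0,p)$, after which \eqref{2b} collapses the latter; without it the Fourier side of the computation does not close up. The paper then splits into the cases $p\nmid K$ and $p\mid K$ (with a further subcase for $_p\mu_3$), but the mechanism is exactly the one you describe.
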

This is the most crucial result of the paper. The fact that $\chi_i$ are related in this special way and are not arbitrary is an important consequence of the action of the Hecke algebra and recurrence relations from Definition 1.1 \eqref{2b}.
For $p=2$, the structure of the local component $\Pi_{F,2}$ can be obtained from the action of the Hecke algebra and relation \eqref{2a}. The component $\Pi_{F,\infty}$ follows from Section 6.1 of \cite{MNP}. Conditions on the Satake parameters give us that $\Pi$ is indeed of the form $\MW(\sigma,2)$ for some $\sigma$ representation of $\GL_2(\A)$. For an odd prime $p$, let $\chi_{p}$ be the unramified character of $\Q_p^\times$ such that $\chi_{p} (p) = \frac{\lambda_p + \sqrt{\lambda_p ^2 - 4}}{2}$. At the prime $p= \infty$, let $\chi_{\infty}(a) = |a|^s$ where $s = \frac{\sqrt{-1}r}{2}$. For the prime $p=2$, let $\chi$ be an unramified character of $\Q_2^\times$ with $\chi(2) = - \epsilon$ for $\epsilon$ as in condition \eqref{2a} of Definition \ref{rec}.
 \begin{prop}
 	Let $\sigma = \otimes'_{p \leq \infty} \sigma_p$ be the irreducible cuspidal automorphic representation of $\GL_2(\A)$ such that $\Pi = \MW(\sigma,2)$ as above. Then
 	\begin{equation}
 	\sigma_p = 
 	\begin{cases}
 	\ind_{\mathcal{B}_2(\Q_p)}^{\GL_2(\Q_p)}(\chi_{p}\times \chi_{p}^{-1}) \quad &\text{ for odd } p< \infty, \\
 	\chi St_{\GL_2} \quad &\text{ for } p = 2, \\
 	\ind_{\Bg(\R)}^{\GL_2(\R)}(\chi_\infty \times \chi_\infty^{-1}) \quad &\text{ for } p = \infty.
 	\end{cases}
 	\end{equation}
 \end{prop}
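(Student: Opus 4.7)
The plan is to use local-global compatibility at every place. Since $\MW(\sigma,2)$ is defined globally as the Langlands quotient of a parabolic induction, its local components are the corresponding local Langlands quotients, so I expect $\Pi_p \cong \MW(\sigma_p,2)$ at every $p$, where the right-hand side denotes the Langlands quotient of $\ind_{P_{2,2}(\Q_p)}^{\GL_4(\Q_p)}(|\det|_p^{1/2}\sigma_p \times |\det|_p^{-1/2}\sigma_p)$. Recovering $\sigma_p$ from the already-known $\Pi_p$ is then a purely local problem at each place.

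First, for an odd finite prime $p$: here $B_p$ is split, local JL is the identity, so $\Pi_p = \Pi_{F,p}$ and the Satake parameters are the $\chi_i(p)$ described in the previous proposition. If $\sigma_p = \ind(\mu_1 \times \mu_2)$ is unramified, the Satake parameters of $\MW(\sigma_p,2)$ are $p^{\pm 1/2}\mu_i(p)$ for $i=1,2$. Matching against the $\chi_i(p)$ up to the Weyl group forces $\{\mu_1(p),\mu_2(p)\} = \{\chi_p(p),\chi_p^{-1}(p)\}$; the product $\chi_p(p)\chi_p^{-1}(p) = 1$ is consistent with the two roots of $X^2 - \lambda_p X + 1$ multiplying to $1$. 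This yields $\sigma_p = \ind_{\mathcal{B}_2(\Q_p)}^{\GL_2(\Q_p)}(\chi_p \times \chi_p^{-1})$ as claimed.

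Next, at the archimedean place, $B_\infty$ is Hamilton's quaternions and local JL from $\GL_2(B_\infty)$ to $\GL_4(\R)$ is non-trivial. The component $\Pi_{F,\infty}$ is given explicitly in Section 6.1 of \cite{MNP} in terms of the Maass form parameter $r$. I would transport it through local JL following Badulescu--Renard \cite{4} and compare against $\MW(\sigma_\infty,2)$; this should match $\sigma_\infty$ to $\ind_{\Bg(\R)}^{\GL_2(\R)}(\chi_\infty \times \chi_\infty^{-1})$ with $\chi_\infty(a) = |a|^{\sqrt{-1}r/2}$, the shift $s = \sqrt{-1}r/2$ being dictated by the archimedean infinitesimal character already present in the MNP analysis.

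The most delicate case will be $p=2$, where $B_2$ is also a division algebra. Here the recurrence $A(K,u,n) = -\tfrac{3\epsilon}{\sqrt{2}}A(\tfrac{K}{2},u-1,n) - A(\tfrac{K}{4},u-2,n)$ translates into a precise Hecke eigenvalue at $p=2$, and the key claim to establish is that within the Badulescu--Renard framework this eigenvalue is realised only by the $\MW$-type Langlands quotient built from the Steinberg twist $\chi St_{\GL_2}$ with $\chi(2) = -\epsilon$. The main obstacle will be verifying this uniqueness: ruling out unramified principal series, supercuspidal twists, and other Steinberg twists at $p=2$ requires carefully matching both the sign $\epsilon$ and the coefficient $-3/\sqrt{2}$ against the local JL description at the ramified prime. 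This is where the Steinberg structure on the $\GL_2(\A)$ side is forced, completing the local description of $\sigma$ and with it the proposition.
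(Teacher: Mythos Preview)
Your overall strategy---local--global compatibility to get $\Pi_p \cong \MW(\sigma_p,2)$ at each place, then recover $\sigma_p$ locally---is exactly what the paper does, and your treatment of the odd primes and of $p=\infty$ matches the paper's proof closely. (At odd $p$ the paper writes down an explicit induction-in-stages isomorphism $\ind_{\BG}^{\GL_4}(\chi_1\times\cdots\times\chi_4)\simeq \ind_{P_{2,2}}^{\GL_4}(\nu^{1/2}\sigma_p\times\nu^{-1/2}\sigma_p)$ rather than matching Satake parameters, but these are two phrasings of the same computation, and your version is arguably cleaner.)

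The one place where the paper proceeds differently is $p=2$, and its route avoids the case-by-case elimination you anticipate. The key input is Theorem~3.2 of Badulescu--Renard \cite{4}: the local Jacquet--Langlands map respects the Speh/MW structure, i.e.\ $\textbf{C}(\overline{u}(\rho',k)) = u(\rho,k)$. Since globally $\Pi = \MW(\sigma,2)$, the local component satisfies $\Pi_2 = u(\sigma_2,2)$, and hence its preimage on the inner-form side is $\Pi_{F,2} = \overline{u}(\rho',2)$ for some irreducible $\rho'$ of $B_2^\times$. But $B_2^\times$ plays the role of $\GL_1$ here, so $\rho'$ is automatically a character $\chi' = \chi\circ\nu'$; comparing with the explicit description of $\Pi_{F,2}$ in Proposition~\ref{Prop5.2} gives $\chi'(\varpi_2) = -\epsilon$. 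The classical Jacquet--Langlands correspondence for $\GL_2$ then sends this character to the twisted Steinberg $\chi St_{\GL_2}$ with $\chi(2) = -\epsilon$. So there is nothing to rule out: the MW-compatibility of local JL forces $\rho'$ to be one-dimensional, and Steinberg on the $\GL_2(\Q_2)$ side is then immediate.
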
 
 We then look at the distinguished vector in $\sigma$ to find a function $f$ associated to $\sigma$. As $\sigma_2$ is Steinberg and $\sigma_\infty$ is principal series, we can show that $f \in \SG$ as required. We complete the proof by showing that $c(N)$ are indeed the Fourier coefficients of $f$ implying $F =F_f$. 

To generalize Theorem \ref{Th1.3} to all $F \in \Ms$ we first show that $\MG$ is finite dimensional and has a Hecke eigenbasis of operators that commute with their adjoint. With this, proving that $\Ms$ is a Hecke invariant subspace suffices as this implies that $\Ms$ has a Hecke eigenbasis $F_i \in \Ms$ which are lifts of $f_i \in \SG$. By linearity of the defining condition $\eqref{3.3}$, $F = \sum_i a_i F_i$ would be a lift of $\sum_i a_i f_i$. 

We prove that $\Ms$ is Hecke invariant by showing that for all the Hecke operators $T_i$, the image under their action $T_i(F)$ satisfies the conditions of Definition \ref{rec}. The condition that Fourier coefficients of $T_i(F)$ depend only on $K,u$ and $n$ is obtained by writing the coefficients of $T_i(F)$ in terms of $A(K,u,n)$ the Fourier coefficients of $F$. Since each of these coefficients depends only on $K,u$ and $n$, so do the coefficients of $T_i(F)$. Condition \eqref{2a} is equivalent to $F$ being a Hecke eigenform at prime $p=2$ so it is valid for all $F \in \Ms$. Condition \eqref{2b} is shown by computing the recurrence sum for  $(T_{i,p}F)(K,u,n)$ and showing that it is equal to $\sum_{d \mid n}d (T_{i,p}F)(K/d^2,u,1)$. Hence we get the result:

\begin{thm} 
	The following are equivalent.
	\begin{enumerate}
		\item $F$ is a lift from an Atkin-Lehner eigenform $f \in \SG$ with eigenvalue $\epsilon \in \{\pm 1 \}$ and which is a Hecke eigenform at $p=2$.
		\item $F$ is an element of the space $\Ms$
	\end{enumerate}
\end{thm}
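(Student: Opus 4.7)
The implication $(1) \Rightarrow (2)$ is already established by the earlier theorem showing $\F \in \Ms$, so the content lies in $(2) \Rightarrow (1)$. My plan is to diagonalize the Hecke action on $\Ms$ and reduce to the Hecke eigenform case already treated in Theorem \ref{Th1.3}; the linearity of the assignment $f \mapsto \F$ given by \eqref{3.3} will then assemble the result.

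First I would set up the required spectral theory on $\MG$: that $\MG$ is finite dimensional and that the Hecke operators at all finite primes, suitably normalized, form a commutative family whose members commute with their adjoints. Finite dimensionality follows from standard estimates on Maass form spaces with fixed Laplace eigenvalue on the quotient of the $5$--dimensional hyperbolic space by $\GL_2(\mathcal{O})$, while self--adjointness of appropriately normalized Hecke operators comes from the usual double coset decomposition. Simultaneous diagonalization then supplies a Hecke eigenbasis of $\MG$.

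The heart of the argument is to verify that $\Ms$ is stable under every Hecke operator $T$. I would expand the Fourier coefficients of $TF$ in terms of the coefficients $A(K,u,n)$ of $F$ using the explicit formulas for the Hecke action, and then check the conditions of Definition \ref{rec} in turn. Condition $(1)$ is immediate: every Fourier coefficient of $TF$ is a linear combination of values $A(K',u',n')$ whose indices are deterministic functions of $(K,u,n)$ and the double coset data, hence depend only on $K$, $u$, $n$. Condition \eqref{2a} is, by hypothesis, equivalent to $F$ being a Hecke eigenform at $p=2$; Hecke operators at odd primes commute with $T_2$, so $TF$ remains a $T_2$--eigenform with the same eigenvalue, while $T_2F$ is itself a scalar multiple of $F$. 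Condition \eqref{2b} is the most delicate: one computes $\sum_{d \mid n} d \cdot (TF)(K/d^2, u, 1)$ and matches it against the direct expansion of $(TF)(K,u,n)$; after splitting divisors according to whether $p \mid d$ or not, the identity should reduce to the classical combinatorial manipulation underlying the preservation of multiplicativity under $T_p$.

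Once $\Ms$ is Hecke invariant and finite dimensional, it inherits a Hecke eigenbasis $\{F_j\} \subset \Ms$, and by Theorem \ref{Th1.3} each $F_j$ is a lift $F_{f_j}$ of a Hecke eigenform $f_j \in \SG$. Writing $F = \sum_j a_j F_j$ and using the linearity of \eqref{3.3} in $c(N)$, the function $f := \sum_j a_j f_j$ satisfies $F = \F$; since every $F_j \in \Ms$ shares the common $\epsilon$ attached to $\Ms$ and hence the same $T_2$--eigenvalue forced by \eqref{2a}, the $f_j$ inherit the common Atkin-Lehner sign $\epsilon$ and $T_2$--eigenvalue, so $f$ remains an Atkin-Lehner eigenform and a Hecke eigenform at $p=2$. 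The main obstacle I anticipate is the verification of \eqref{2b}: the Hecke formulas at primes dividing $n$ involve cross terms whose reassembly into the multiplicative sum requires careful bookkeeping of the shifts in $(K,u,n)$. A secondary hurdle is establishing commutativity with the adjoint for the Hecke algebra on $\MG$, which is the spectral prerequisite for the eigenbasis step.
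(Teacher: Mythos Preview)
Your proposal is correct and follows essentially the same route as the paper: establish a Hecke eigenbasis for $\MG$ via finite dimensionality and normality of the Hecke operators, prove $\Ms$ is Hecke stable by verifying the conditions of Definition \ref{rec} for $T_{p,i}F$ (with \eqref{2b} being the computational core), and then appeal to Theorem \ref{Thm5.1} together with the linearity of \eqref{3.3}. Your observation that a given $F \in \Ms$ determines a fixed $\epsilon$ and hence lives in a single $T_2$-eigenspace, so that only basis elements $F_j$ with that same $\epsilon$ contribute, is exactly the point needed to conclude that $f = \sum_j a_j f_j$ retains the Atkin--Lehner and $T_2$ eigenproperties.
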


The outline of this paper is as follow. In Section 2, we first introduce the basic notation and the automorphic forms that concern us in this paper. Following that, in Section 3, we present the main results and relevant information from \cite{MNP} which we will build upon. We explicitly describe the Maass space and present the important recurrence relations in Section 4. Then, in Section 5, we present the complete version of the theorem and prove it using representation theory as described above. The final result of the paper and its proof is proved in Section 6. 

I would like to thank my advisor Professor Ameya Pitale for introducing me to this problem and guiding me patiently through it. I would also like to thank Professor Ralf Schmidt for humoring my various questions from time to time. I am also grateful to Dr. Alok Shukla and Manami Roy for lending me a valuable ear during my times of need. Finally, I would like to thank the referee for corrections and advice as well as pointing a flaw in the final section of the original draft. 
\section{Basic Notation}

\subsection{Quaternion Algebra and the 5-dimensional hyperbolic space}
 Following the notation in \cite{MNP}, let $B$ be the definite quaternion algebra over $\mathbb{Q}$ with discriminant 2. In terms of the basis $\{1,i,j,k\}$, $B = \Q +\Q i +\Q j +\Q k$ with $i,j,k$ satisfying $$i^2 =j^2=k^2 =-1, ij = -ji=k.$$
 $\GL_2(B)$ will be the group of elements of $M_2(B)$ whose reduced norms are non-zero. 
 Let $\mathbb{H}= B \otimes_\Q \R$ be the Hamilton quaternion algebra with $x \rightarrow \bar{x}$ the main involution of $\mathbb{H}$. 
 Let $tr(x) = x + \bar{x}$ and $\nu(x) = x \bar{x}$ be the reduced trace and reduced norm of $x \in \mathbb{H}$ respectively with $|x| = \sqrt{\nu(x)}$.

The general linear group $G \coloneqq \GL_2(\mathbb{H})$  admits an Iwasawa decomposition 
$$G =Z^+ NAK,$$
where
\begin{align*}
&Z^+ \coloneqq \{\Big[\begin{matrix}
c & 0 \\
0 & c \\
\end{matrix}\Big]|c \in \R^\times_+\}, 
N \coloneqq \{n(x) = \Big[\begin{matrix}
1 & x \\
0 & 1 \\
\end{matrix}\Big]|x\in \mathbb{H}\}, \\
& A \coloneqq \{a_y = \Big[\begin{matrix}
\sqrt{y} & 0 \\
0 & \sqrt{y}^{-1} \\
\end{matrix}\Big]|y\in \mathbb{R}^+\},
K \coloneqq \{k \in G: \prescript{t}{}{}\bar{k}k = 1_2 \}.
\end{align*}
 The quotient $G/Z^+K$ can be realized as 
 $$\Big\{\Big[\begin{matrix}
 y & x \\
 0 & 1 \\
 \end{matrix}\Big]|y\in \R^\times_+, x \in \mathbb{H}\Big\}.$$
 This gives a realization of the 5-dimensional real hyperbolic space. 
 
 \subsection{Automorphic forms}
 For $\lambda \in \C$ and an arithmetic subgroup $\Gamma \subset \SL_2(\R)$, let $S(\Gamma,\lambda)$ denote the space of Maass cusp forms of weight $0$ on the complex upper half plane $\mathfrak{h}$ whose eigenvalue with respect to the hyperbolic Laplacian is $-\lambda$.
 
For an arithmetic subgroup $\Gamma \subset \GL_2(\mathbb{H})$ and $r \in \C$, let $\mathcal{M}(\Gamma,r)$ denote the space of smooth functions $F$ on $\GL_2(\mathbb{H})$ which satisfy the following conditions : 
	 \begin{enumerate}
	 	\item $\Omega\cdot F = -\frac{1}{2}\Big(\frac{r^2}{4}+ 1\Big)F$, where $\Omega$ is the Casimir operator defined on page 143 in \cite{MNP},
	 	
	 	\item for any $(z,\gamma, g, k) \in Z^+ \times \Gamma \times G \times K$, we have $F(z\gamma g k) = F(g)$,
	 	
	 	\item F is of moderate growth.
	 		
	 \end{enumerate}

For automorphic forms of $\SL_2(\R)$ we will concern ourselves only with the congruence subgroup $\Gamma_0(2) \subset \SL_2(\R)$ of level 2. For the choice of a discrete subgroup of $\GL_2(\HH)$, note that the definite quaternion algebra $B$ has a unique maximal order $\mathcal{O}$ given by: 
$$ \mathcal{O} = \Z + \Z i + \Z j + \Z \frac{1 + i+ j+ij}{2},$$ 
called the Hurwitz order. The discrete subgroup we shall consider in this case will be $\GL_2(\mathcal{O})$.

Let
\begin{equation} \label{Sdef}
\mathcal{S} \coloneqq \Z(1 -ij) + \Z  (-i- ij) + \Z  (-j - ij) + \Z (2ij)
\end{equation}

denote the dual lattice of $\mathcal{O}$ with respect to the bilinear form on $\HH \times \HH$ defined by Re $= \frac{1}{2} tr$. It can be shown that $\mathcal{S} = \varpi_2 \mathcal{O}$ for $\varpi_2 \coloneqq (1+i)$ which is the uniformizer of $B \otimes_\Q \Q_2$.

In terms of $\mathcal{S}$, following Section 2.3 of \cite{MNP}, any $F \in \MG$ has a Fourier expansion of the form 

$$F(n(x)a_y) = u(y) + \sum_{\beta\in S\setminus \{0\}}A(\beta)y^2K_{\sqrt{-1}r} (2\pi|\beta|y)e^{2\pi\sqrt{-1}\text{Re}(\beta x)}.$$

Here $K_\alpha$ is the modified Bessel function, which satisfies the differential equation 
$$y^2\frac{d^2K_\alpha}{dy^2} + y\frac{dK_\alpha}{dy} -(y^2+\alpha^2)K_\alpha = 0.$$

\section{Lifting from $\SG$ to $\MG$}
We first define the set of primitive elements of $\mathcal{S}$, denoted $\mathcal{S}^{prim}$, by 
\begin{equation}
\mathcal{S}^{prim} \coloneqq \{\beta \in \mathcal{S} \setminus \{0\} | \varpi_2 \mid \beta, \varpi_2 ^2 \nmid \beta, n \nmid \beta \text{ for all odd integers n} \}. \label{3.1}
	\end{equation}
Any $\beta \in \mathcal{S}\setminus \{0\}$ can then be uniquely written as 
\begin{equation}
\beta = \varpi_2^u \cdot n \cdot \beta_0, \label{3.2}
\end{equation}
where $u$ is a non-negative integer, $n$ is an odd positive integer and $\beta_0 \in \mathcal{S}^{prim}$.

Let $c(N)$ be the  Fourier coefficients of $f \in S(\Gamma_0(2),-(\frac{1}{4}+\frac{r^2}{4}))$.  Assuming it is an Atkin-Lehner eigenfunction with eigenvalue $\epsilon \in \{\pm 1\}$, set 
\begin{equation}
A(\beta) \coloneqq |\beta|\sum_{t=0}^{u} \sum_{d \mid n}(-\epsilon)^t c\Big(-\frac{|\beta|^2}{2^{t+1}d^2}\Big). \label{3.3}
\end{equation}

With $A(\beta)$ as defined in equation \eqref{3.3}, Muto, Narita and Pitale prove the following theorem.
\begin{thm}[Theorem 4.4 in \cite{MNP}] \label{Pitale}
	Let $f \in S(\Gamma_0(2),−(\frac{1}{4}+\frac{r^2}{4}))$ with Fourier coefficients $c(N)$ and eigenvalue $\epsilon$ of the Atkin-Lehner involution. Define
\begin{equation}
\F(n(x)a_y) \coloneqq \sum_{\beta \setminus \{0\}} A(\beta)y^2K_{\sqrt{-1}r}(2\pi|\beta|y)e^{2\pi
 \sqrt{-1}Re(\beta x)}
\end{equation}
with $\{A(\beta)\}_{\beta \in \Ss \setminus \{0\}}$ as in (\refeq{3.3}). 
Then we have $\F \in \M (\GL_2(\mathcal{O}), r)$ and $\F$ is a cusp form. 
Furthermore, $\F ̸\not \equiv 0$ if $f \not \equiv 0.$ 
\end{thm}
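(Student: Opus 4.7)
The plan is to apply a Maass-type converse theorem adapted to the 5-dimensional real hyperbolic realization of $G/Z^+K$. The skeleton breaks into four parts: convergence and smoothness of the series, the Casimir eigenvalue property, invariance under $\GL_2(\mathcal{O})$, and finally cuspidality together with non-vanishing.

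First I would verify that the series defining $\F$ converges absolutely and uniformly on compacta. The standard Hecke-type polynomial bound on $|c(N)|$, the exponential decay of $K_{\sqrt{-1}r}$ at infinity, and the $O(X^2)$ lattice-point count for $\{\beta \in \mathcal{S} : |\beta|^2 \le X\}$ together yield smoothness and termwise differentiability. A direct computation in the $(y,x)$ coordinates, using the Bessel ODE stated immediately after the Fourier expansion, shows that each summand $y^2 K_{\sqrt{-1}r}(2\pi|\beta|y)e^{2\pi\sqrt{-1}\mathrm{Re}(\beta x)}$ is an eigenfunction of the Casimir operator $\Omega$ with eigenvalue $-\frac{1}{2}(r^2/4+1)$, which gives condition (1) in the definition of $\mathcal{M}(\Gamma,r)$.

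The crux is invariance under $\GL_2(\mathcal{O})$. The group is generated by the translations $n(\alpha)$ with $\alpha \in \mathcal{O}$, the finite group of diagonal units in $\mathcal{O}^\times$, and an inversion-type Weyl element $w$. Invariance under $n(\mathcal{O})$ is automatic because $\mathcal{S}=\varpi_2\mathcal{O}$ is by definition the dual lattice of $\mathcal{O}$ under $\mathrm{Re}(\cdot)$, so each exponential in the Fourier expansion is $\mathcal{O}$-periodic. Invariance under the diagonal unit action reduces to checking that $A(\beta)$ depends only on $|\beta|^2$ and on the invariants $u,n$ in the decomposition (3.2), which are preserved by left or right multiplication by units. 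The nontrivial step is invariance under $w$. For this I would attach to $\F$ its Mellin transform, producing a Dirichlet series
\[
L(s,\F) \;=\; \sum_{\beta \in (\mathcal{S}\setminus\{0\})/\mathcal{O}^\times} \frac{A(\beta)}{|\beta|^{2s}}.
\]
Substituting (3.3) and reorganizing via $\beta=\varpi_2^u n\beta_0$ with $\beta_0 \in \mathcal{S}^{prim}$, the series should factor as $L(s,f)$ times an explicit auxiliary factor: a geometric series in $t$ supplying an Euler factor at $2$ weighted by $-\epsilon$, a divisor sum in $d$ giving a shifted zeta factor, and a quaternionic theta-type count for $\beta_0$. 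Since $f$ is an Atkin-Lehner eigenform, $L(s,f)$ extends entirely and satisfies a functional equation with root number $\epsilon$; the auxiliary factor is manifestly symmetric under the appropriate involution $s\leftrightarrow c-s$. The resulting functional equation is precisely the analytic input that the Maass converse theorem converts into $w$-invariance.

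Moderate growth follows from the exponential decay of $K_{\sqrt{-1}r}$ together with polynomial control on $A(\beta)$. Cuspidality is transparent because no constant term $u(y)$ appears; one must only check vanishing of the constant term at the remaining $\GL_2(\mathcal{O})$-cusp orbits, which are few for the Hurwitz order. Non-vanishing is a Möbius inversion on (3.3): the relation can be inverted across the decomposition (3.2) to express each $c(N)$ as a finite linear combination of $A(\beta)$'s, so $\F\equiv 0$ forces $c(N)=0$ for all $N$. I expect the main obstacle to lie in the combinatorial bookkeeping around the decomposition $\beta=\varpi_2^u n\beta_0$: the sign $(-\epsilon)^t$ must pair exactly with the Atkin-Lehner root number of $L(s,f)$ so that the combined functional equation comes out symmetric, and making this pairing precise is what forces $\epsilon$ to occur in the definition of $A(\beta)$ in just the way it does.
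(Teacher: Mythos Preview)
This theorem is not proved in the present paper; it is quoted verbatim as Theorem~4.4 of \cite{MNP} and stated without proof in Section~3. The only information the paper gives about the argument is in the introduction: ``they show that these $A(\beta)$ are the Fourier coefficients of some $F_f \in \MG$ by using Maass Converse Theorem.'' Your proposal follows exactly this route---the Mellin transform of $\F$, the factorization of the resulting Dirichlet series through $L(s,f)$ and an auxiliary Euler/zeta factor coming from the decomposition $\beta=\varpi_2^u n\beta_0$, and the use of the Atkin--Lehner functional equation (with sign $\epsilon$) to deduce invariance under the Weyl element---so your sketch is consistent with the strategy the paper attributes to \cite{MNP}. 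For the non-vanishing step, the paper actually supplies the inversion you anticipate: Proposition~\ref{prop1} together with Lemmas~\ref{lem2} and~\ref{lem3} recovers each $c(-N)$ as an explicit finite combination of the $A(\beta)$, so $\F\equiv 0$ forces $f\equiv 0$.
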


It has been shown in Section 5 of \cite{MNP} that if $f$ is a Hecke eigenform, then $F_f$ is also a Hecke eigenform. For each prime $p \leq \infty$, let $B_p = B \otimes_\Q \Q_p$. For any finite prime $p \neq 2$, we have $\GL_2(B_p) \cong \GL_4(\Q_p)$. Letting $\mathcal{O}_p$ be the p-adic completion of $\mathcal{O}$, we get, for $p \neq 2$, $\mathcal{O}_p \simeq M_2(\Z_p)$ and $\GL_2(\mathcal{O}_p) \simeq \GL_4(\Z_p)$. Set $K_p = \GL_2(\mathcal{O}_p)$ for all $p < \infty$.

According to \cite{Sat}, the Hecke algebra of $\GL_2(B_p)$ with respect to $\GL_2(\mathcal{O}_p) $ is generated by: 
\[
	\begin{cases}
	\{\varphi_1^{\pm 1}, \varphi_2 \} & \text{ if } p=2\\
	\{\phi_1^{\pm 1}, \phi_2,\phi_3, \phi_4\} & \text{ if } p\neq 2.
	\end{cases}
\]
Here $\varphi_1,\varphi_2$ denote the characteristic functions of 
$$ K_2 \begin{bmatrix}
\varpi_2 & 0 \\
0 & \varpi_2 \\
\end{bmatrix}K_2,
K_2 \begin{bmatrix}
\varpi_2 & 0 \\
0 & 1 \\
\end{bmatrix}K_2 $$ 
respectively, whereas $\phi_1,\phi_2,\phi_3,\phi_4$ denote the characteristic functions of $K_p h_i K_p$ where $h_i, 1 \leq i\leq 4$ are
$$\begin{bmatrix}
p & & &\\
 & p & &\\
 & & p &\\
 & & & p
\end{bmatrix},
\begin{bmatrix}
p & & &\\
& p & &\\
& & p &\\
& & & 1
\end{bmatrix},
\begin{bmatrix}
p & & &\\
& p & &\\
& & 1 &\\
& & & 1
\end{bmatrix},
\begin{bmatrix}
p & & &\\
& 1 & &\\
& & 1 &\\
& & & 1
\end{bmatrix}$$
respectively for $p \neq 2$.

We will define the set $ C_p \coloneqq \{\alpha \in \mathcal{O} | \nu(\alpha) = p \}/ \mathcal{O}^\times$.  Proposition 5.8 of \cite{MNP} allows us to explicitly compute the action of the Hecke operators on the Fourier coefficients $A(\beta)$ of $F \in \MG$. 

\begin{prop} \label{prop3.1}
1. Let $p = 2$ and $h =
\begin{bmatrix}
\varpi_2 & 0\\
0 & 1
\end{bmatrix}$. We obtain
$$(K_2hK_2 \cdot F)_\beta = 2(A(\beta \varpi_2^{-1}) + A(\beta \varpi_2)).$$
2. Let $p$ be an odd prime and $\beta \in \mathcal{S}\setminus \{0\}$.\\
(a) When $h_2 =
\begin{bmatrix}
p & & &\\
& p & &\\
& & p &\\
& & & 1
\end{bmatrix}$
$$(K_phK_p \cdot F)_\beta = p( \sum_{\alpha \in C_p} A(\beta \bar{\alpha}^{-1}) + \sum_{\alpha \in C_p} A(\bar{\alpha}\beta)).$$\\
(b) When $h_4 = \begin{bmatrix}
	p & & &\\
	& 1 & &\\
	& & 1 &\\
	& & & 1
\end{bmatrix}$, 
$$(K_phK_p \cdot F)_\beta = p( \sum_{\alpha \in C_p}A(\alpha^{-1} \beta) + \sum_{\alpha \in C_p}A(\beta \alpha)).$$\\
(c) When $h_3 =
\begin{bmatrix}
	p & & &\\
	& p & &\\
	& & 1 &\\
	& & & 1
\end{bmatrix}$, 
$$(K_phK_p \cdot F)_\beta = (p^2 A(p^{-1}\beta ) + p^2A(p\beta) + p \sum_{(\alpha_1,\alpha_2) \in C_p \times C_p} A(\alpha_1^{-1} \beta \alpha_2)).$$
\end{prop}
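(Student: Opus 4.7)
The plan is to derive each formula as a direct specialization of Proposition 5.8 of \cite{MNP}, which expresses the action of a general Hecke double coset on the Fourier coefficients of $F \in \MG$ in terms of a chosen coset decomposition $K g K = \bigsqcup_i g_i K$. Thus the work reduces to finding an explicit coset decomposition for each $h$ above, substituting into the Fourier expansion of $F$, and reading off the coefficient of $e^{2\pi\sqrt{-1}\,\mathrm{Re}(\beta x)}$.

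For $p = 2$ and $h = \mathrm{diag}(\varpi_2, 1)$, I would work directly in $\GL_2(B_2)$ and decompose $K_2 h K_2$ into single cosets. Since $\varpi_2$ is a uniformizer of $B_2$ with $\mathcal{O}_2/\varpi_2 \mathcal{O}_2$ of order $2$, the double coset breaks into two families of single cosets, one corresponding to left multiplication by $\varpi_2$ and the other to right multiplication by $\varpi_2$. Evaluating $F$ on these representatives shifts the Fourier argument by $\beta \mapsto \beta \varpi_2^{\pm 1}$, and the factor $2$ reflects the number of representatives within each family.

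For odd $p$, the local isomorphism $B_p \cong M_2(\Q_p)$ identifies $\GL_2(B_p)$ with $\GL_4(\Q_p)$ and $K_p$ with $\GL_4(\Z_p)$, so the coset decompositions of $K_p h_i K_p$ are the standard ones for $\GL_4$. I would index the single cosets by $C_p$: for $h_4$ this produces one-sided shifts giving $A(\alpha^{-1}\beta) + A(\beta\alpha)$; for $h_2$, which is dual to $h_4$, the canonical involution on $B$ forces the conjugate $\bar{\alpha}$ to appear, yielding $A(\beta\bar{\alpha}^{-1}) + A(\bar{\alpha}\beta)$; for $h_3$ both left and right shifts enter simultaneously, producing the two-sided conjugation terms $A(\alpha_1^{-1}\beta\alpha_2)$ along with extremal terms $p^2 A(p^{-1}\beta)$ and $p^2 A(p\beta)$. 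In each case one uses the multiplicativity $|\alpha \beta| = |\alpha||\beta|$ to check that, after reindexing, the Bessel-function arguments line up correctly, so that the coefficient extraction is legitimate.

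The main obstacle I anticipate is the $h_3$ case, where the rank-$2$ double coset degenerates at its extremes: identifying precisely which single cosets are absorbed into the two-sided sum over $C_p \times C_p$ and which instead yield the boundary terms $p^2 A(p^{\pm 1}\beta)$, with the correct weight $p^2$, requires careful bookkeeping of when the parametrization by $(\alpha_1,\alpha_2)$ collapses. All other parts are straightforward consequences of the standard $\GL_4$-theory combined with the general coefficient formula of Proposition 5.8 of \cite{MNP}.
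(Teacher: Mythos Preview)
Your approach is essentially the paper's own: this proposition is not proved here but is quoted directly from \cite{MNP}, Proposition~5.8, so the coset-decomposition derivation you outline is exactly the content being cited. Your sketch of the $h_3$ bookkeeping and the appearance of $\bar{\alpha}$ via the main involution matches what happens in \cite{MNP}; there is nothing further to add.
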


This action of the Hecke algebra allows us to find the Hecke eigenvalues for $\F$ in terms of the Hecke eigenvalues of $f$. 

\begin{prop} 
\label{Prop3.2}
Let $f \in \SG$ be a Hecke eigenform with eigenvalue $\lambda_p$ for every prime $p$ and Atkin-Lehner eigenvalue $\epsilon$. Then $F = \F$ as defined in Theorem \ref{Pitale} is a Hecke eigenform with eigenvalues $\tensor*[_p]{\mu}{_1}, \tensor*[_p]{\mu}{_2}, \tensor*[_p]{\mu}{_3}, \tensor*[_p]{\mu}{_4}$ for $\phi_1, \phi_2,\phi_3, \phi_4$ respectively at every odd prime $p$ and $_2\mu_1$, $\tensor*[_2]{\mu}{_2}$ for $\varphi_1, \varphi_2$ at $p=2$. They are related as
\begin{equation} \tensor*[_p]{\mu}{_1} = 1 , \tensor*[_p]{\mu}{_2} = \tensor*[_p]{\mu}{_4} = p(p+1)\lambda_p, \tensor*[_p]{\mu}{_3} = p^2\lambda_{p}^{2} + p^3 + p \label{3.5}
\end{equation}
and 
\begin{equation}
\tensor*[_2]{\mu}{_1} = 1, \tensor*[_2]{\mu}{_2} = -3\sqrt{2}\epsilon. \label{3.6}
\end{equation}
\end{prop}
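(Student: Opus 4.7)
The plan is to substitute the explicit formula \eqref{3.3} for $A(\beta)$ into each of the identities in Proposition \ref{prop3.1} and then reduce every expression to the Hecke recurrence satisfied by the Fourier coefficients $c(N)$ of $f$. Write $\beta = \varpi_2^u n\beta_0$ as in \eqref{3.2}. The essential observation is that for $\alpha \in \mathcal{O}$ of odd reduced norm $p$, left or right multiplication of $\beta$ by $\alpha$ or $\alpha^{-1}$ preserves the $\varpi_2$-exponent $u$ and merely rescales $|\beta|^2$ by $p^{\pm 1}$, whereas at $p=2$ right multiplication by $\varpi_2^{\pm 1}$ simply shifts $u$ by $\pm 1$. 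Thus $A(\bar\alpha\beta)$ and $A(\alpha^{-1}\beta)$ reduce via \eqref{3.3} to divisor sums of $c\bigl(-p|\beta|^2/\cdots\bigr)$ and $c\bigl(-|\beta|^2/(p\cdots)\bigr)$, exactly the shape needed to invoke the Hecke relation for $f$.

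The central operators $\phi_1$ at odd $p$ and $\varphi_1$ at $p=2$ act trivially on $F$ because their double cosets are scalar and $F$ is $Z^+$-invariant, giving ${}_p\mu_1={}_2\mu_1=1$. For $\varphi_2$, substituting \eqref{3.3} into $2\bigl(A(\beta\varpi_2^{-1})+A(\beta\varpi_2)\bigr)$ makes the two $t$-sums telescope against each other, leaving a residual multiple of $A(\beta)$ whose coefficient is fixed by the Atkin-Lehner eigenvalue $\epsilon$. Tracking the prefactors $|\beta\varpi_2^{\pm 1}|/|\beta|=2^{\pm 1/2}$ yields the constant $-3\sqrt{2}\,\epsilon$ of \eqref{3.6}, consistent with the expectation that the $p=2$ formula is the specialisation of the odd-prime formula $p(p+1)\lambda_p$ using the Atkin-Lehner-determined value $\lambda_2=-\epsilon/\sqrt 2$.

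For $\phi_2$ and $\phi_4$ at an odd prime $p$, substituting \eqref{3.3} into Proposition \ref{prop3.1}(2a) and (2b) expresses both terms as $(p+1)$-fold sums over $C_p$ of divisor sums in $c(N)$; although the odd parts of $\alpha^{-1}\beta$ and $\bar\alpha\beta$ vary with $\alpha$, the combined sums reorganise, via an index shift $d\mapsto pd$ linking the two families, into the standard Hecke relation for $f$ at $p$. The net outcome is $(p+1)\lambda_p A(\beta)$, and the outer factor of $p$ from Proposition \ref{prop3.1} yields ${}_p\mu_2={}_p\mu_4=p(p+1)\lambda_p$. The equality ${}_p\mu_2={}_p\mu_4$ reflects the symmetry $x\mapsto x^{-1}$ of the expected Satake parameters $\{p^{\pm 1/2}\alpha_p^{\pm 1}\}$ of $\MW(\sigma_f,2)$.

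The $\phi_3$ case is the delicate one and I expect it to be the main obstacle. Here Proposition \ref{prop3.1}(2c) produces $p^2\bigl(A(p^{-1}\beta)+A(p\beta)\bigr)+p\sum_{(\alpha_1,\alpha_2)\in C_p^2}A(\alpha_1^{-1}\beta\alpha_2)$, in which the double sum has $(p+1)^2$ terms sharing $u$ and $|\beta|^2$ with $\beta$ but differing in the odd part. The step requiring real work is a combinatorial lemma counting, for each admissible odd $n'$, the number of pairs $(\alpha_1,\alpha_2)\in C_p^2$ with $\alpha_1^{-1}\beta\alpha_2$ having odd part $n'$; with that in hand, substituting \eqref{3.3} and invoking the iterated Hecke relations for $f$ at $p$, together with the two scalar contributions $p^2 A(p^{\pm 1}\beta)$, collapses the total to $(p^2\lambda_p^2+p^3+p)A(\beta)$. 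As a consistency check, $(p^2\lambda_p^2+p^3+p)/p^2=\lambda_p^2+p+p^{-1}$ is exactly the second elementary symmetric polynomial in $\{p^{\pm 1/2}\alpha_p^{\pm 1}\}$, matching the predicted Hecke eigenvalue of $\phi_3$ on $\MW(\sigma_f,2)$ via the Jacquet-Langlands correspondence.
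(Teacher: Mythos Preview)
The paper does not give its own proof of this proposition; it simply cites Propositions 5.9, 5.11 and 5.12 of \cite{MNP}. Your outline is essentially the strategy carried out there, and the same machinery resurfaces in this paper's proof of Proposition~\ref{prop5.1}, so you can cross-check your computations against that argument.

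Your plan is sound, but the description of the odd-prime cases is slightly misdirected. The mechanism is not really an ``index shift $d\mapsto pd$ linking the two families''; rather, one needs precise control over (i) for which $\alpha\in C_p$ the element $\beta\bar\alpha^{-1}$ or $\alpha^{-1}\beta$ lies in $\mathcal{S}$ at all, and (ii) for which $\alpha$ the product $\bar\alpha\beta$ or $\beta\alpha$ acquires an extra factor of $p$ in its odd part. Both questions are answered by Lemma~\ref{l51} (which is Lemma~5.10 of \cite{MNP}): for $\beta\in\mathcal{S}^{prim}$ there is exactly one $\alpha\in C_p$ with $p\mid\alpha\beta$ when $p\mid|\beta|^2$, none when $p\nmid|\beta|^2$, and in either case $p^2\nmid\alpha\beta$. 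This is precisely the ``combinatorial lemma'' you anticipate for $\phi_3$, and it is equally the key input for $\phi_2$ and $\phi_4$; you should invoke it already there rather than leaving the reorganisation vague. With Lemma~\ref{l51} in hand, each sum over $C_p$ in Proposition~\ref{prop3.1} splits into explicitly counted pieces with known $|\cdot|^2$, $u$, and odd part, and substituting \eqref{3.3} together with the Hecke recursion $p^{1/2}c(-pN)+p^{-1/2}c(-N/p)=\lambda_p\,c(-N)$ for $f$ collapses everything to the stated multiples of $A(\beta)$. Your consistency checks via the Satake parameters of $\MW(\sigma_f,2)$ are correct and are exactly how \cite{MNP} packages the result in its Section~6.
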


This is proved in Proposition 5.9, 5.11 and 5.12 of \cite{MNP}. Let the representation $\pi_F$ denote the irreducible cuspidal automorphic representation of $\GL_2(B_\A)$ corresponding to $\F$ with $B_\A \coloneqq \otimes_{p \leq \infty}'B_p$. $\pi_F$ is cuspidal as $\F$ is a cusp form and the irreducibility follows from the strong multiplicity-one result for $\GL_2(B_\A)$ (c.f. \cite{5},\cite{4}). Let $\pi_F \simeq \otimes'_p \pi_p$ with $\pi_p$ an irreducible admissible representation of $\GL_2(B_p)$. The local components $\pi_p$ are completely determined by the Hecke eigenvalues given in Proposition \ref{Prop3.2}.

Let $\mathcal{B}_2$ and $\mathcal{B}_4$ denote the group of upper triangular matrices in $\GL_2$ and $\GL_4$ respectively. Then, for $p < \infty$ and odd, 
$\pi_p$ is the unique spherical constituent of the unramified principal series representation $\text{Ind}_{\mathcal{B}_4(\Q_p)}^{\GL_4(\Q_p)}(\chi_1 \times \chi_2 \times \chi_3 \times \chi_4)$
where $\chi_i$ are unramified character of $\Q_p^\times$, given by
\begin{align*}
\chi_1(p) = p^{1/2}\frac{\lambda_p + \sqrt{\lambda_p^2 -4}}{2}, & \chi_2(p) = p^{1/2}\frac{\lambda_p - \sqrt{\lambda_p^2 -4}}{2},\\ 
\chi_3(p) = p^{-1/2}\frac{\lambda_p + \sqrt{\lambda_p^2 -4}}{2}, & \chi_4(p) = p^{-1/2}\frac{\lambda_p - \sqrt{\lambda_p^2 -4}}{2}.
\end{align*}

For $p =2$, $\pi_2$ is is the unique spherical constituent of the unramified
principal series representation $\text{Ind}_{\mathcal{B}_2(\Q_p)}^{\GL_2(B_2)}(\chi_1 \times \chi_2)$ with

$$
\chi_1(\varpi_2) = -\sqrt{2}\epsilon, \chi_2(\varpi_2) = -1/\sqrt{2}\epsilon.
$$

At the prime $p =\infty$, the archimedian component $\pi_\infty$ is isomorphic to the principal series $\text{Ind}^{\GL_2(\mathbb{H})}_{\mathcal{B}_2(\HH)}(\chi_{\pm \sqrt{-1}r})$ with 

$$\chi_s\Big(\begin{pmatrix}
a & *\\
0 & d
\end{pmatrix} \Big) = \nu(ad^{-1})^s.$$

These local representations are explicitly constructed in Section 6 of \cite{MNP}.

\section{Maass space in $\MG$}

We will call the image of the lift in $\M(\GL_2(\mathcal{O}),r)$ as the Maass space. To characterize the functions in the Maass space, we first define the following subspace.

\begin{dfn} \label{rec}
	Let $\Ms$ be the subspace of $\MG$ consisting of functions $F$ whose Fourier coefficients $A(\beta)$ satisfy:
	\begin{enumerate}
		\item If $\beta = \varpi_2^u n\beta_0 $ as in (\ref{3.2}), then $A(\beta)$ depend only on $K = |\beta|^2$, $u$  and $n$. We shall then write $A(\beta)$ as $A(K,u,n)$.
		
		\item $A(K,u,n)$ satisfy the recurrence relations : 
		\begin{enumerate}
		
		\item $A(K,u,n) = \frac{-3\epsilon}{\sqrt{2}}A(\frac{K}{2},u-1,n) - A(\frac{K}{4},u-2,n)$ for some $\epsilon \in \{\pm 1\}$ \label{2a}
		
		\item  $A(K,u,n)= \sum_{d \mid n} d \cdot A(\frac{K}{d^2},u,1)$ \label{2b}
\end{enumerate}	
\end{enumerate}
\end{dfn}
 
We will define $A(K,u,n) = 0$ if $u$ is negative. These recurrence relations are similar to those of Maass in the case of Saito-Kurokawa lifts in \cite{Maass}. 

\begin{thm} \label{Th1}
Let $f \in \SG$ be an Atkin-Lehner eigenform with eigenvalue $\epsilon \in \{\pm 1 \}$ and which is a Hecke eigenform at $p=2$. Then $\F$ obtained in Theorem \ref{Pitale} belongs to the Maass space $\M^*(\GL_2(\mathcal{O}),r)$.
\end{thm}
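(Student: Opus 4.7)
The plan is to verify the two clauses of Definition \ref{rec} directly for the Fourier coefficients $A(\beta)$ of $\F$ from \eqref{3.3}. Clause (1) is immediate: direct inspection of \eqref{3.3} shows that $A(\beta)$ depends on $\beta$ only through the scalar $|\beta|$ and the invariants $(u,n)$ extracted from $\beta = \varpi_2^u n \beta_0$, with nothing in the double sum seeing $\beta_0$ itself. Since $|\beta|^2 = K$, we may legitimately write $A(\beta) = A(K,u,n)$.

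For clause \eqref{2b} I would simply relabel the defining sum. For each $d \mid n$, applying \eqref{3.3} with third argument $1$ gives
\[
A\bigl(K/d^2,\, u,\, 1\bigr) = \frac{\sqrt{K}}{d} \sum_{t=0}^{u} (-\epsilon)^t c\!\left(-\frac{K}{2^{t+1} d^2}\right).
\]
Multiplying by $d$ and summing over $d \mid n$ then reproduces the defining sum \eqref{3.3} for $A(K,u,n)$, giving the identity.

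For clause \eqref{2a} the hypothesis that $f$ is a Hecke eigenform at $p=2$ becomes essential. By Proposition \ref{Prop3.2}, $\F$ is a Hecke eigenform at $p=2$ with eigenvalue $\tensor*[_2]{\mu}{_2} = -3\sqrt{2}\,\epsilon$, while Proposition \ref{prop3.1}(1) gives
\[
-3\sqrt{2}\,\epsilon\, A(\beta) = 2 A(\beta \varpi_2^{-1}) + 2 A(\beta \varpi_2),
\]
with coefficients outside $\Ss$ interpreted as zero. For $\beta = \varpi_2^u n \beta_0$, one checks that $\beta \varpi_2$ has invariants $(2K, u+1, n)$ and $\beta \varpi_2^{-1}$ has invariants $(K/2, u-1, n)$ when $u \geq 1$ (and lies off $\Ss$ when $u=0$). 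Clause (1) lets us translate the eigenvalue equation to
\[
-3\sqrt{2}\,\epsilon\, A(K,u,n) = 2 A(K/2, u-1, n) + 2 A(2K, u+1, n),
\]
and isolating $A(2K, u+1, n)$, then reindexing $K \mapsto K/2$ and $u \mapsto u-1$, yields \eqref{2a}.

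The main obstacle is a bookkeeping one: confirming that right multiplication by $\varpi_2$ leaves the central odd factor $n$ in the unique decomposition \eqref{3.2} unchanged and only shifts the exponent $u$ by $\pm 1$ in the non-commutative setting. This reduces to the observations that $\nu(\varpi_2)=2$ has no odd part and that conjugation by $\varpi_2$ preserves $\mathcal{O}$. Note that for this direction, neither the $c(-N)$-isolation proposition nor Legendre's three-square theorem is needed; those tools become essential only for the converse argument underpinning Theorem \ref{Th1.3}.
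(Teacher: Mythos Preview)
Your proof is correct. Clauses (1) and \eqref{2a} are handled exactly as in the paper: clause (1) by inspection of \eqref{3.3}, and clause \eqref{2a} by combining the Hecke eigenvalue $\tensor*[_2]{\mu}{_2}=-3\sqrt{2}\epsilon$ with the action of Proposition~\ref{prop3.1}(1) on Fourier coefficients, then rewriting in the $(K,u,n)$ variables.

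For clause \eqref{2b}, however, your route is genuinely different and more economical. The paper first establishes Proposition~\ref{prop1} (the inversion formula expressing $c(-N)$ in terms of $A(2N,u,1)$ and $A(N,u-1,1)$, whose proof uses Legendre's three-square theorem via Lemmas~\ref{lem1}--\ref{lem3}), substitutes that expression for $c(-K/(2^{t+1}d^2))$ back into \eqref{3.3}, and then telescopes the resulting double sum in $t$ to arrive at $\sum_{d\mid n} d\,A(K/d^2,u,1)$. You instead observe directly that applying \eqref{3.3} to the triple $(K/d^2,u,1)$ gives $A(K/d^2,u,1)=(\sqrt{K}/d)\sum_{t=0}^{u}(-\epsilon)^t c(-K/(2^{t+1}d^2))$, so that multiplying by $d$ and summing over $d\mid n$ reassembles \eqref{3.3} verbatim. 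This bypasses Proposition~\ref{prop1} entirely, and your closing remark is on point: the $c(-N)$-isolation formula and the three-square argument are needed only for the converse direction (Theorem~\ref{Thm5.1}), not here. What the paper's detour buys is that Proposition~\ref{prop1} is proved once and then reused in Section~5.5; your argument trades that reuse for a shorter self-contained proof of the present theorem.
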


To prove the theorem, we first need a formula for the Fourier coefficients $c(-N)$ in terms of the Fourier coefficients $A(\beta)$ of $\F$.

\begin{prop} \label{prop1}
	Let $N = 4^a b$, where $a,b$ are a non-negative integers and $4 \nmid b $. With assumptions as in Theorem \ref{Th1}, we get
	\begin{equation}
		c(-N) = \frac{A(2N,u,1)}{\sqrt{2N}} + \epsilon \frac{A(N,u-1,1)}{\sqrt{N}} \label{4.1}
	\end{equation}
	where \[ 
	u = 
	\begin{dcases}
	2a & \text{if }b \equiv 1,3 \mod (4) \\
	2a+1 & \text{if }b \equiv 2 \mod (4) \\ 
	\end{dcases}
	\]
\end{prop}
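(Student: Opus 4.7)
The plan is to derive the identity by evaluating the definition \eqref{3.3} of $A(\beta)$ on two specific Fourier indices and observing that the resulting expression telescopes down to a single Fourier coefficient of $f$.

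First, I would locate admissible quaternions. For the symbols $A(2N,u,1)$ and $A(N,u-1,1)$ to correspond to actual Fourier coefficients of $F_f$, I need to exhibit $\beta,\beta'\in\mathcal{S}$ whose decompositions \eqref{3.2} take the form $\beta=\varpi_2^u\beta_0$ and $\beta'=\varpi_2^{u-1}\beta_0'$ with $\beta_0,\beta_0'\in\mathcal{S}^{prim}$, of norms $|\beta|^2=2N$ and $|\beta'|^2=N$ respectively. Writing $\beta_0=\varpi_2\alpha$ with $\alpha\in\mathcal{O}$, this reduces to producing a Hurwitz integer of prescribed odd norm that is not divisible by any odd prime. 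This is the step where Legendre's three-square theorem enters: it rules out the exceptional moduli $4^a(8k+7)$ and, combined with the four-square representation by Hurwitz integers, supplies the required primitive $\alpha$ of norm $b$ (resp.\ $b/2$). The case split in the definition of $u$ is dictated by the $2$-adic valuation $v_2(N)=2a+v_2(b)$, which must coincide with $u$ so that $N/2^u$ is an odd integer equal to $|\beta_0|^2/2$.

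Next I would substitute $n=1$ into \eqref{3.3}, which collapses the inner sum over divisors to a single term and gives
$$A(K,u,1)\;=\;\sqrt{K}\sum_{t=0}^{u}(-\epsilon)^t\, c\!\left(-\frac{K}{2^{t+1}}\right).$$
Specialising to $(K,u)=(2N,u)$ and $(K,u)=(N,u-1)$, and dividing by $\sqrt{K}$ in each case, produces two finite sums in the coefficients $c(-N/2^t)$. The heart of the argument is then a one-line telescoping: after multiplying the second identity by $\epsilon$ and reindexing $s=t+1$, it becomes $-\sum_{s=1}^{u}(-\epsilon)^s c(-N/2^s)$, so that when added to the first identity the terms with $t\geq 1$ cancel in pairs and only the $t=0$ contribution $c(-N)$ survives. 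Before concluding, I would check the integrality sanity condition that $N/2^t$ is a positive integer for every $t\leq u$; this follows immediately because $u$ was chosen to equal $v_2(N)$, so no pseudo-coefficients appear in the telescoping.

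I expect the main obstacle to be the preliminary step of producing $\beta$ and $\beta'$ with the prescribed $\varpi_2$-adic decomposition, rather than the algebraic manipulation itself. The telescoping is mechanical, but the careful verification that primitive Hurwitz representations of the relevant odd integers exist, and that the case analysis on $b\bmod 4$ exactly matches the parity required to force $|\beta_0|^2/2$ to be odd, is where the number-theoretic input enters and where the argument could most easily go wrong.
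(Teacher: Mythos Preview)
Your proposal is correct and follows essentially the same route as the paper: first manufacture an element of $\mathcal{S}$ with the right $\varpi_2$-adic shape via Legendre's three-square theorem (the paper does this through Lemmas~\ref{lem1}--\ref{lem3}), and then telescope the defining sum \eqref{3.3}. One small simplification the paper exploits that you might adopt: rather than constructing $\beta$ and $\beta'$ independently, it builds only $\beta$ with $|\beta|^2=2N$ and takes $\beta'=\varpi_2^{-1}\beta$, which automatically has the correct decomposition $\varpi_2^{u-1}\beta_0$ and norm $N$; this also makes the boundary case $u=0$ (where $A(N,-1,1)=0$ by convention and no $\beta'$ exists) transparent.
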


Note that more than one $\beta$ may have the same $K,u$ and $n$. However, by construction in equation (\ref{3.3}) all such $\beta$ give the same $A(\beta)$. As such, $c(-N)$ is well defined in terms of $\beta$ representatives of $A(K,u,n)$. We will need the following lemmas for the proof of Proposition \ref{prop1}

	\begin{lem} \label{lem1}
Let $\beta = (x +yi+ zj +w ij) \in \mathcal{S}^{prim}$ iff $|\beta|^2 \equiv 2 \mod 4$ and $\gcd(\beta) \coloneqq \gcd(x,y,z,w)=1$.		
	\end{lem}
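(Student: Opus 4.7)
The plan is to translate each of the three defining conditions of $\Sp$---namely $\beta\in\Ss$, $\varpi_2^2\nmid\beta$, and $n\nmid\beta$ for every odd integer $n>1$---into a statement about the integer tuple $(x,y,z,w)$, and then match these against $|\beta|^2=x^2+y^2+z^2+w^2$ modulo $4$ and against $\gcd(x,y,z,w)$. Two elementary facts drive the translation: $\Ss=\varpi_2\mathcal{O}$ with $\varpi_2=1+i$, and $\varpi_2^2=2i$ with $i\in\mathcal{O}^\times$.

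First I would parametrize $\Ss$ in coordinates. Writing a generic $\alpha=a+bi+cj+dij\in\mathcal{O}$ with $(a,b,c,d)\in\Z^4\cup(\Z+\tfrac{1}{2})^4$ and expanding $\varpi_2\alpha=(a-b)+(a+b)i+(c-d)j+(c+d)ij$, one reads off that the image is exactly the sublattice of $\Z^4$ cut out by $x+y+z+w\equiv 0\pmod 2$. In particular $|\beta|^2$ is always even when $\beta\in\Ss$.

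Next I would identify $\varpi_2^2\mid\beta$ with a residue condition on $|\beta|^2$. Since $\varpi_2^2=2i$ and $i$ is a unit, $\varpi_2^2\mid\beta$ is equivalent to $\beta/2\in\mathcal{O}$, which in coordinates holds iff $(x,y,z,w)$ is either all-even or all-odd. In the all-even case $|\beta|^2\equiv 0\pmod 4$; in the all-odd case each $x^2\equiv 1\pmod 8$ forces $|\beta|^2\equiv 4\pmod 8$. Any other parity pattern compatible with $\beta\in\Ss$ has exactly two odd coordinates, so $|\beta|^2\equiv 2\pmod 4$. Hence $\varpi_2^2\nmid\beta$ iff $|\beta|^2\equiv 2\pmod 4$. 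For odd $n>1$, the half-integer branch of $\beta/n\in\mathcal{O}$ would force $2x/n$ to be an odd integer, which is impossible, so $n\mid\beta$ in $\mathcal{O}$ iff $n\mid\gcd(x,y,z,w)$. Consequently ``no odd $n>1$ divides $\beta$'' is equivalent to $\gcd(x,y,z,w)$ being a power of $2$, and combining this with $|\beta|^2\equiv 2\pmod 4$---which rules out all four coordinates being simultaneously even---forces $\gcd(x,y,z,w)=1$. The reverse implications are immediate from the same dictionary.

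The main obstacle is the bookkeeping with the half-integer stratum of the Hurwitz order: one must recognize that the all-odd parity class, although its coordinates are visibly odd, still lies in $2\mathcal{O}$ via the half-integer option and therefore contributes $|\beta|^2\equiv 4\pmod 8$ rather than $\equiv 2\pmod 4$. Once this case is correctly placed among the parity patterns, the three coordinate criteria assemble in both directions to yield the stated characterization of $\Sp$.
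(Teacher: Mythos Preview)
Your argument is correct. Both you and the paper translate the three conditions defining $\Sp$ into parity/norm conditions on $(x,y,z,w)$, so the overall strategy is the same. The execution differs slightly: for the key equivalence $\varpi_2^2\mid\beta \Longleftrightarrow |\beta|^2\equiv 0\pmod 4$, the paper uses norm multiplicativity for one direction and, for the converse, computes $\varpi_2^{-1}\beta$ and appeals back to its norm characterization of $\Ss$; you instead use $\varpi_2^2=2i$ and classify directly which parity classes of $(x,y,z,w)$ land in $2\mathcal{O}$. Your route is more explicit about the half-integer stratum of $\mathcal{O}$ (in particular that the all-odd class lies in $2\mathcal{O}$ and contributes $|\beta|^2\equiv 4\pmod 8$) and about why the odd-divisor condition is exactly a $\gcd$ condition; the paper leaves both of these as implicit or ``easy verification.'' Either way the lemma falls out quickly.
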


 \begin{proof}[Proof of Lemma \ref{lem1}]

Simplifying the condition from (\ref{Sdef}), we see that $x+y+z+w \equiv 0 \mod 2$ and therefore $x^2 + y^2 +z^2 +w^2 \equiv 0 \mod 2 \equiv 0, 2 \mod 4$. If $\beta \in \mathcal{O}$ such that $|\beta|^2 \equiv 0 \mod 2$ then by parity conditions $x+y+z+w \equiv 0 \mod 2$ implying $\beta \in \mathcal{S}$. Hence, $\beta \in S$ iff $|\beta|^2 \equiv 0,2 \mod 4$.

Consider an element $\beta_1 \in \mathcal{S}$ with $\gcd(\beta_1)=1$ such that $\beta_1 \notin \mathcal{S}^{prim}$. Then by definition of $\mathcal{S}^{prim}$ in (\ref{3.1}) this means $\beta_1 = \varpi_2^2\beta$ for some $\beta \in \Ss$. Now, $|\varpi_2^2|^2 = 4$ hence, $4| |\beta_1|^2$ implying $|\beta_1|^2 \equiv 0 \mod 4$. 

Conversely, if $|\beta|^2 \equiv 0 \mod 4$ then $|\varpi_2^{-1}\beta|^2 \equiv 0,2 \mod 4$. Since $\varpi_2^{-1} = \frac{1-i}{2}$ it is an easy verification that $\varpi_2^{-1}\beta \in \mathcal{O}$. Then by first part, $\varpi_2^{-1}\beta \in \mathcal{S}$. Therefore, $\beta \in \varpi_2\mathcal{S}$ which is to say $\beta \notin \Sp$.

Therefore, $\beta \in \mathcal{S}$ satisfies $|\beta|^2 \equiv 2 \mod 4$ with $\gcd(\beta) =1$ iff $\beta \notin \varpi_2 S$ and equivalently $\beta \in \mathcal{S}^{prim}$ as required. \end{proof}

For any $N$, the easiest way to identify $\beta$ with $\gcd(\beta)=1$ and $|\beta|^2 = 2N$ is if $w = 1$ with $x^2 + y^2 +z^2 = 2N -1$. By Legendre's three-square Theorem, an odd number $2N-1$ cannot be written as a sum of three squares iff $2N-1 \equiv 7 \mod 8 \Leftrightarrow 2N \equiv 0 \mod 8 \Leftrightarrow N \equiv 0 \mod 4$. However, if $x^2 +y^2+z^2 = 2N-1$ and $x,y,z$ are all odd then $4|(x^2+y^2+z^2+1)$ and hence $\beta \notin \Ss^{prim}$.

\begin{lem} \label{lem2}
If $N \equiv 1,3 \mod 4$, then there is a $\beta \in \Ss^{prim}$ such that
\begin{equation}
c(-N) = \frac{A(\beta)}{\sqrt{2N}}.
\end{equation}
\end{lem}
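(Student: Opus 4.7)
The strategy is to exploit the fact that for $\beta \in \Ss^{prim}$ the sum defining $A(\beta)$ in \eqref{3.3} degenerates. Writing $\beta = \varpi_2^u \cdot n \cdot \beta_0$ as in \eqref{3.2}, membership in $\Ss^{prim}$ forces $u = 0$ and $n = 1$, so the double sum contains only the single term $t=0$, $d=1$, yielding
\[
A(\beta) = |\beta|\, c\!\left(-\tfrac{|\beta|^2}{2}\right).
\]
Consequently, once we exhibit a $\beta \in \Ss^{prim}$ with $|\beta|^2 = 2N$, the identity $A(\beta) = \sqrt{2N}\, c(-N)$ rearranges to the claimed formula.

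The construction of such a $\beta$ proceeds exactly as sketched in the paragraph preceding the lemma: take $\beta = x + yi + zj + ij$, so that $|\beta|^2 = x^2 + y^2 + z^2 + 1$, and look for an integer solution to $x^2 + y^2 + z^2 = 2N - 1$. By Legendre's three-square theorem, such a representation exists whenever $2N - 1 \not\equiv 7 \pmod 8$, which is automatic under the hypothesis $N \equiv 1, 3 \pmod 4$ (giving $2N - 1 \equiv 1, 5 \pmod 8$).

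To finish, one must verify that this $\beta$ really lies in $\Ss^{prim}$. By Lemma \ref{lem1} this reduces to $|\beta|^2 \equiv 2 \pmod 4$ and $\gcd(x,y,z,1) = 1$, supplemented by the underlying condition $\beta \in \Ss$. The modular condition is immediate from $|\beta|^2 = 2N$ with $N$ odd; the gcd condition is trivial since the last coordinate is $1$; and $\beta \in \Ss$ reduces (via the defining lattice \eqref{Sdef}) to the parity requirement $x+y+z+w \equiv 0 \pmod 2$. Since $x^2 + y^2 + z^2 = 2N-1$ is odd, an odd number of $x, y, z$ are odd, so the only potential failure is the case where all three are odd. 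But that would force $x^2 + y^2 + z^2 \equiv 3 \pmod 8$ and hence $N \equiv 2 \pmod 4$, contradicting the hypothesis; so exactly one of $x, y, z$ is odd, and $x + y + z + 1$ is even.

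No step here presents a genuine obstacle: the entire argument is a routine application of \eqref{3.3} followed by a parity bookkeeping for the three-square representation, and the two uses of the hypothesis $N \equiv 1, 3 \pmod 4$ (to invoke Legendre's theorem and to rule out the all-odd case) align perfectly. The complementary case $N \equiv 2 \pmod 4$, where no $\beta \in \Ss^{prim}$ of norm-squared $2N$ exists, is presumably what forces the second summand in Proposition \ref{prop1} and will be handled separately, using a representative with $u \geq 1$.
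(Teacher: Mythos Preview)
Your proof is correct and follows the same approach as the paper: construct $\beta = x + yi + zj + ij$ with $x^2+y^2+z^2 = 2N-1$ via Legendre's three-square theorem, verify $\beta \in \Ss^{prim}$ using Lemma~\ref{lem1}, and then collapse the defining sum \eqref{3.3} to a single term since $u=0$, $n=1$. Your parity analysis is in fact slightly more complete than the paper's, which simply asserts that one may take $x,y,z$ not all odd; you show this is forced by $2N-1 \equiv 1,5 \pmod 8$, since an all-odd triple would give $x^2+y^2+z^2 \equiv 3 \pmod 8$.
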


\begin{proof}[Proof of Lemma \ref{lem2}]
	If $N \equiv 1,3 \mod 4$ then $2N -1 \equiv 1,5 \mod 8$. That means we can find some $x,y$ and $z$ not all odd such that $2N-1 = x^2+y^2+z^2$. Hence, by lemma \ref{lem1}, $\beta = (x + yi + zj + ij)\in \Ss^{prim}$ and $\beta = \varpi_2^0 \cdot 1 \cdot \beta$. Then (\ref{3.3}) becomes
	$$A(\beta) = |\beta| c\Big(\frac{-|\beta|^2}{2}\Big) = \sqrt{2N}c(-N).$$
	Solving for $c(-N)$ gives the required result.
\end{proof}

If $N \equiv 2 \mod 4$ then we can still find $x,y$ and $z$ such that $x^2 +y^2+z^2 +1^2= 2N$. As $|\beta|^2 = 2N \equiv 0 \mod 4$, $\beta \in \Ss$ but $\beta \notin \Sp$ by Lemma \ref{lem1}. However, $\varpi_2^{-2} \beta \notin \Ss$ as $|\varpi_2^{-2} \beta|^2 \equiv 1 \mod 4$. Therefore $\beta = \varpi_2^1 \cdot 1 \cdot \beta_0$ for some $\beta_0 \in \Ss^{prim}$.

If $N \equiv 0 \mod 4$ then such $x,y,z$ do not exist. In that case, write $N = 4^a b$ with $4 \nmid b$. 
Then, we can find $x,y$ and $z$ such that $x^2 +y^2 +z^2 + 1 = 2b$. 
This allows us to create $\beta$ with $|\beta|^2 = 2N$ such that $\beta = \varpi_2^{2a} \cdot 1 \cdot \beta_0$ if $b \equiv 1,3 \mod 4$ and $\beta = \varpi_2^{2a+1} \cdot 1 \cdot \beta_0$ if $b \equiv 2 \mod 4$ with $\beta_0 \in \Sp$.

\begin{lem}\label{lem3}
	If $N \equiv 0,2  \mod 4$, then there is a $ \beta \in \Ss$ such that
	\begin{equation}
c(-N) = \frac{A(\beta)}{\sqrt{2N}} + \epsilon \frac{A( \varpi_2^{-1}\beta)}{\sqrt{N}}.
\end{equation}
\end{lem}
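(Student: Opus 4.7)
The plan is to produce the required $\beta$ explicitly from the analysis already done just before the lemma, and then to compare the sums defining $A(\beta)$ and $A(\varpi_2^{-1}\beta)$ via the formula \eqref{3.3}. In both residue classes, Legendre's three-square theorem (applied as in the discussion preceding the lemma) furnishes a $\beta \in \mathcal{S}$ with $|\beta|^2 = 2N$, $\gcd(\beta) = 1$, and canonical decomposition $\beta = \varpi_2^{u} \cdot 1 \cdot \beta_0$ with $\beta_0 \in \mathcal{S}^{\text{prim}}$, where $u = 1$ when $N \equiv 2 \pmod 4$, $u = 2a$ when $N = 4^{a}b$ with $b \equiv 1, 3 \pmod 4$, and $u = 2a+1$ when $b \equiv 2 \pmod 4$. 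In every case $u \geq 1$, so $\varpi_2^{-1}\beta = \varpi_2^{u-1} \cdot 1 \cdot \beta_0$ again lies in $\mathcal{S} = \varpi_2 \mathcal{O}$ and satisfies $|\varpi_2^{-1}\beta|^2 = N$.

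Next I would substitute these data into \eqref{3.3}, using that $n = 1$ collapses the $d$-sum and that $|\beta|^2 / 2^{t+1} = N/2^{t}$. This produces the two one-parameter expressions
\begin{equation*}
\frac{A(\beta)}{\sqrt{2N}} = \sum_{t=0}^{u} (-\epsilon)^{t} c\!\left(-\frac{N}{2^{t}}\right), \qquad \frac{A(\varpi_2^{-1}\beta)}{\sqrt{N}} = \sum_{t=0}^{u-1} (-\epsilon)^{t} c\!\left(-\frac{N}{2^{t+1}}\right).
\end{equation*}
Multiplying the second sum by $\epsilon$ and reindexing by $s = t+1$, the sign identity $\epsilon(-\epsilon)^{s-1} = -(-\epsilon)^{s}$ (valid since $\epsilon^{2} = 1$) rewrites it as $-\sum_{s=1}^{u} (-\epsilon)^{s} c(-N/2^{s})$. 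Adding this to the first sum cancels every term with $s \geq 1$ and leaves only the $t = 0$ contribution $c(-N)$, which is precisely \eqref{4.1}.

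The whole argument is essentially mechanical once the correct $\beta$ has been identified; the only delicate point is the sign bookkeeping in the index shift, and the verification that $\varpi_2^{-1}\beta \in \mathcal{S}$, both of which are handled by $u \geq 1$ together with $\mathcal{S} = \varpi_2 \mathcal{O}$. No new ideas beyond those already used in Lemmas \ref{lem1} and \ref{lem2} are needed, and combining the present lemma with Lemma \ref{lem2} then covers all residue classes of $N$ modulo $4$ and yields Proposition \ref{prop1}.
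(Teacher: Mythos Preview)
Your proof is correct and follows essentially the same approach as the paper: both arguments expand $A(\beta)$ and $A(\varpi_2^{-1}\beta)$ via \eqref{3.3} with $n=1$ and exploit the telescoping between the two $t$-sums after the index shift $s=t+1$. The paper isolates the $t=0$ term of $A(\beta)$ first and then recognizes the remaining sum as $-\epsilon A(\varpi_2^{-1}\beta)/\sqrt{N}$, while you compute the two normalized pieces separately and add them; this is purely a difference in presentation.
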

\begin{proof}[Proof of Lemma \ref{lem3}]
	Unlike Lemma \ref{lem2}, in this case $u \neq 0$. Following (\ref{3.3}), we get
	 $$A(\beta) = |\beta|\sum_{t=0}^u (-\epsilon)^tc\left(-\frac{|\beta|^2}{2^{t+1}}\right) = \sqrt{2N}c(-N) +\sqrt{2N}\sum_{t=1}^u (-\epsilon)^tc\left(-\frac{|\beta|^2}{2^{t+1}}\right)$$ 
	But \begin{align*}
	\sum_{t=1}^u (-\epsilon)^t c\left(-\frac{|\beta|^2}{2^{t+1}}\right) 
	&= \frac{-\epsilon}{\sqrt{N}} \sqrt{N}\sum_{t=0}^{u-1} (-\epsilon)^t c\left(-\frac{|\beta|^2 / 2}{2^{t+1}}\right) 
	\\&= \frac{-\epsilon}{\sqrt{N}} \sqrt{N}\sum_{t=0}^{u-1} (-\epsilon)^t c\left(-\frac{|\varpi_2^{-1}\beta|^2 }{2^{t+1}}\right) 
	\\&= \frac{-\epsilon}{\sqrt{N}} A(\varpi_2^{-1}\beta).
	\end{align*}
	
Rearranging the terms gives us the required result.
\end{proof}
 
 \begin{proof}[Proof of Proposition \ref{prop1}]
 Let $N = 4^a b$ with $a$ and $b$ as in the statement. 
 If $b \equiv 1,3 \mod 4$ then by the remark just before Lemma \ref{lem2}, we can find $\beta' \in \Sp$ such that $|\beta'|^2 = 2b$. Then $\beta = \varpi_2^{2a} \beta'$ has $|\beta|^2 = 2N$, $u=2a$ and $n=1$. Hence, in this case $A(\beta) = A(2N, 2a, 1)$. Then $A(\varpi_2^{-1}\beta) = A(N,2a-1,1)$ and the case $b \equiv 1,3 \mod 4$ of the proposition follows from Lemma \ref{lem3}. If $a =0$ then we are in case of Lemma \ref{lem2} with $A(N,-1,1) =0$.
 
 If $b \equiv 2 \mod 4$ then we can find $\beta' = \varpi_2 \cdot 1 \cdot \beta_0$. Taking $\beta = \varpi_2^{2a} \beta'$, we have $A(\beta) = A(2N, 2a+1,1)$ and $A(\varpi_2^{-1}\beta) = A(N,2a,1)$. The case $b \equiv 2 \mod 4$ of the proposition now follows from Lemma \ref{lem3}.
 \end{proof}

Now that we have an explicit formula for the coefficients $c(-N)$ of $f$ in terms of the coefficients $A(K,u,n)$ of $F_f$, we can prove the claim of Theorem \ref{Th1}.

\begin{proof}[Proof of Theorem \ref{Th1}]
	
The Fourier coefficients of $F_f$ are given in terms of the Fourier coefficients of $f$ as in equation (\ref{3.3}) by: 
$$A(\beta) = |\beta|\sum_{t=0}^{u} \sum_{d \mid n}(-\epsilon)^t c\Big(-\frac{|\beta|^2}{2^{t+1}d^2}\Big).$$
The only properties of $\beta$ used in here are $|\beta|, u$ and $n$. Replacing $|\beta|$ with $|\beta|^2$ we can say that the Fourier coefficients $A(\beta)$ of $F_f$ are depend only on $|\beta|^2, u$ and $n$, satisfying the first condition of Definition \ref{rec}. 

To prove equation \eqref{2b}, we use the value of $c(-N)$ from \eqref{4.1} and substituting $A(K,u,n)$ for $A(\beta)$. Doing so, we get:
\begin{align*}
A(K,u,&n)= 
\sqrt{K} \sum_{t=0}^u \sum_{d \mid n} (-\epsilon)^t c\left(-\frac{K}{2^{t+1}d^2}\right) 
\\=& \sqrt{K} \sum_{t=0}^u \sum_{d \mid n} (-\epsilon)^t {\left(\frac{A(K/(2^{t}d^2),u-t,1)}{\sqrt{K/(2^{t}d^2)}} + \epsilon \frac{A(K/(2^{t+1}d^2),u-t-1,1)}{\sqrt{K/(2^{t+1}d^2)}}\right)}
\\=&\sqrt{K} \sum_{d \mid n} \Bigg(\sum_{t=0}^u \Bigg( (-\epsilon)^t	 \frac{A((K/d^2)/2^{t},u-t,1)}{\sqrt{(K/d^2)/2^{t}}} \\
 &- (-\epsilon)^{t+1} \frac{A((K/d^2)/2^{t+1},u-(t+1),1)}{\sqrt{(K/d^2)/2^{t+1}}}\Bigg)\Bigg)
\\=& \sqrt{K} \sum_{d \mid n} \frac{A(K/d^2,u,1)}{\sqrt{K/d^2}}
\\=& \sum_{d \mid n} d A(K/d^2,u,1)
\end{align*}

For equation \eqref{2a}, note that $F_f$ is Hecke eigenform for $p=2$ since we have assumed the same for $f$. Then, by Proposition 5.10 of \cite{MNP}, the Fourier coefficients of the lift $F_f$ satisfy
$$2(A(\beta \varpi_2) + A(\beta \varpi_2^{-1})) = -3\sqrt{2}\epsilon A(\beta)$$ with $A(\beta \varpi_2^{-1}) = 0$ if $u=0$.
Writing it in terms of $K,u$ and $n$, we get 
$$2(A(2K,u+1,n) + A(K/2,u-1,n)) = -3\sqrt{2}\epsilon A(K,u,n)$$ or equivalently

$$A(K,u,n) = \frac{-3\epsilon}{\sqrt{2}}A(\frac{K}{2},u-1,n) - A(\frac{K}{4},u-2,n)$$
for $u \geq 1$ with $A(\frac{K}{4},u-2,n)= 0$ for $u=1$.
\end{proof}
\section{Main Theorem}

Theorem \ref{Th1} shows that if $F \in \MG$ is a lift then $F \in \mathcal{M}^*(\GL_2(\mathcal{O}),r)$. We wish to prove a converse of Theorem \ref{Th1}. We will first show this under the extra hypothesis that $F$ is a Hecke eigenform and later generalize it to all $F \in \Ms$.

\begin{thm}\label{Thm5.1}
	Let $F \in \mathcal{M}^*(\GL_2(\mathcal{O}),r)$ such that $F$ is a cuspidal Hecke eigenform. Then,  there is a $f \in \SG$, a Hecke eigenform, such that $F = F_f$.
\end{thm}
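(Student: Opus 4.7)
The plan is to translate the problem into representation theory. Let $\Pi_F \simeq \otimes'_p \Pi_{F,p}$ be the irreducible cuspidal automorphic representation of $\GL_2(B_\A)$ associated to the Hecke eigenform $F$, and let $\Pi$ be its image under the global Jacquet-Langlands correspondence of Badulescu-Renard \cite{4}, regarded as a representation of $\GL_4(\A)$. The goal is to prove that $\Pi = \MW(\sigma,2)$ for some cuspidal automorphic $\sigma$ of $\GL_2(\A)$; the sought-after $f$ will then come from a distinguished vector of $\sigma$.

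At each odd prime $p$, the Hecke eigenvalues of $F$ under $\phi_1,\phi_2,\phi_3,\phi_4$ determine the Satake parameters $\chi_1,\chi_2,\chi_3,\chi_4$ of the unramified principal series whose unique spherical constituent is $\Pi_{F,p}$. The central local claim is that the Maass recurrence \eqref{2b} forces these four parameters to take the rigid shape
\begin{equation*}
\bigl\{\,p^{\pm 1/2}\tfrac{\lambda_p \pm \sqrt{\lambda_p^2-4}}{2}\,\bigr\}
\end{equation*}
for a single $\lambda_p \in \C$, as asserted in the proposition cited in the introduction. I would prove this by applying \eqref{2b} to $\beta$ with $n=p$ (and then $n=p^2$) and comparing the two sides using the explicit Hecke-action formulas of Proposition \ref{prop3.1} for $\phi_2$ and $\phi_4$, whose actions involve sums $\sum_{\alpha \in C_p}$ that against $\beta = p\beta_0$ reorganise into the divisor sum $\sum_{d \mid p} d\, A(K/d^2,u,1)$. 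Matching then produces a polynomial identity in $_p\mu_1,\ldots,{}_p\mu_4$ that cuts the four free parameters down to the single parameter $\lambda_p$ in the paired $p^{\pm 1/2}$ pattern above.

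At $p = 2$, recurrence \eqref{2a} plays the analogous role: through Proposition \ref{prop3.1}(1) it pins the $\varphi_2$-eigenvalue at $-3\sqrt{2}\epsilon$ and identifies $\Pi_{F,2}$ with the spherical constituent having Satake data $(-\sqrt{2}\epsilon,-1/(\sqrt{2}\epsilon))$. At $p=\infty$, the Casimir condition in the definition of $\M(\GL_2(\mathcal{O}),r)$ together with $K$-invariance force $\Pi_{F,\infty} \simeq \ind^{\GL_2(\HH)}_{\Bg(\HH)}(\chi_{\pm\sqrt{-1}r})$, exactly as in Section 6.1 of \cite{MNP}. Assembling the local components, define $\sigma = \otimes'_p \sigma_p$ with $\sigma_p$ as in the second proposition of the introduction. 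A place-by-place check shows that the local Jacquet-Langlands transfer of each $\Pi_{F,p}$ agrees with the $p$-component of $\MW(\sigma,2)$, so by strong multiplicity one $\Pi = \MW(\sigma,2)$, and Badulescu-Renard then guarantees that $\sigma$ is a genuine cuspidal automorphic representation of $\GL_2(\A)$. The classical automorphic function attached to the newvector of $\sigma$ produces $f$; because $\sigma_2$ is a Steinberg twist of level 2 and $\sigma_\infty$ is the principal series with parameter $\sqrt{-1}r/2$, the function $f$ lies in $\SG$ with Atkin-Lehner eigenvalue $\epsilon$ and the correct Laplace eigenvalue. Inverting Proposition \ref{prop1}, the coefficients $c(-N)$ of $f$ are exactly those read off from the $A(K,u,n)$; substituting them back into the defining sum \eqref{3.3} applied to $f$ reproduces every $A(\beta)$, so $F = F_f$.

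The main obstacle, and the heart of the argument, is the first step: showing that the purely classical divisor-sum recurrence \eqref{2b} on Fourier coefficients forces the Satake parameters of $\Pi_{F,p}$ at every odd $p$ to collapse into the Speh-type pattern above, rather than remaining four independent unramified characters. Heuristically this reflects the fact that $\MW(\sigma,2)$ has a degree-2 rather than a degree-4 underlying Euler factor; converting that heuristic into a clean algebraic identity demands careful bookkeeping of the $C_p$-sums in Proposition \ref{prop3.1} against the divisor sum in \eqref{2b}. Once that rigidity is in hand, the remaining assembly, the invocation of global Jacquet-Langlands, the verification $f \in \SG$, and the final Fourier-coefficient comparison are essentially routine.
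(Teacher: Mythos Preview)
Your overall architecture matches the paper's: compute the local components of $\Pi_F$ using the Maass relations, pass through global Jacquet--Langlands to $\GL_4$, identify the image as $\MW(\sigma,2)$, and extract $f$ from the newvector of $\sigma$. The description of the local analysis at odd $p$, at $p=2$, and at $\infty$ is also essentially what the paper does (though the paper's proof of Proposition~\ref{prop5.1} works with primitive $\beta$ having $n=1$ rather than $n=p$, and relies on Lemma~5.10 of \cite{MNP} to control the $C_p$-sums).

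There is, however, a genuine gap in the global step. You write: define $\sigma=\otimes'_p\sigma_p$ locally, check that the local transfer of $\Pi_{F,p}$ agrees with the $p$-component of $\MW(\sigma,2)$, invoke strong multiplicity one to get $\Pi=\MW(\sigma,2)$, and then say Badulescu--Renard guarantees $\sigma$ is cuspidal automorphic. This is circular. Strong multiplicity one compares two \emph{automorphic} representations; you cannot apply it to $\MW(\sigma,2)$ before knowing $\sigma$ is automorphic, and Badulescu--Renard does not manufacture automorphy for an arbitrary restricted tensor product of local representations. What their theorem (Proposition~18 of \cite{4}) actually gives is that the image $\Pi$ of the cuspidal $\Pi_F$ is of the form $\MW(\tau,k_\tau)$ for \emph{some} cuspidal $\tau$ on $\GL_{4/k_\tau}$ with $k_\tau\mid 2$; it does not tell you which value $k_\tau$ takes.

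The missing argument is therefore to rule out $k_\tau=1$, i.e.\ to show $\Pi$ is not itself cuspidal on $\GL_4(\A)$. The paper does this in Proposition~\ref{prop5.3} by invoking Sarnak's quantitative bound toward Ramanujan for $\GL_4$ (equation (14) of \cite{7}): any Satake parameter $\alpha$ of a cuspidal $\GL_4$ representation satisfies $|\log_p|\alpha||\le \tfrac12-\tfrac{1}{17}$. But the rigidity you established at odd $p$ forces the Satake parameters of $\Pi_p$ to have absolute value $p^{\pm 1/2}$ times a unitary factor, so $|\log_p|\alpha||$ can be made arbitrarily close to $\tfrac12$ on both sides of zero, contradicting the bound. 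Once $k_\tau=2$ is secured, $\tau$ is an honest cuspidal automorphic representation of $\GL_2(\A)$, and \emph{then} the local comparison identifies $\tau_p$ with your $\sigma_p$. Without this step your argument does not establish the existence of an automorphic $\sigma$ at all.
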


\subsection{Description of the automorphic representation }
If $F$ is a cuspidal Hecke eigenform, let the automorphic representation associated with it as we have introduced after Proposition 3.2 be denoted by $\Pi_F \simeq \otimes'_{p \leq \infty}\Pi_{F,p}$. At every odd prime $p$, the local component $\Pi_{F,p}$ is a spherical representation of $\GL_2(B_p)$ with $B_p = B \otimes \Q_p$. The representation is cuspidal since the Hecke eigenform $F$ is cuspidal. 

For every odd $p < \infty$ we have $\GL_2(B_p) \cong \GL_4(\Q_p)$. From Section 5.2 and 6.1 of \cite{MNP}, we have $\Pi_{F,p}$ is the unique irreducible constituent of some unramified principal series representation $\ind_{\mathcal{B}_4(\Q_p)}^{\GL_4(\Q_p)}(\chi_1 \times \chi_2 \times \chi_3 \times \chi_4)$ where each $\chi_i$ is an unramified character of $\Q_p$. 

Since $F \in \Ms$, the characters $\chi_1,\chi_2,\chi_3,\chi_4$ have a special form. This is proved in the next proposition.

\begin{prop}\label{prop5.1}
For every odd prime $p$, there is a $\lambda_{p} \in \C$ such that, up to the action of the Weyl group, $\chi_i$ are given by the formula
	\begin{align*} 
	\chi_1(p) = p^{1/2}\frac{\lambda_p + \sqrt{\lambda_p ^2 - 4}}{2} &; \quad \chi_2(p) = p^{1/2}\frac{\lambda_p - \sqrt{\lambda_p ^2 - 4}}{2};\\
	\chi_3(p) = p^{-1/2}\frac{\lambda_p + \sqrt{\lambda_p ^2 - 4}}{2} &; \quad \chi_4(p) = p^{-1/2}\frac{\lambda_p - \sqrt{\lambda_p ^2 - 4}}{2} \numberthis \label{5.1}
	\end{align*}
\end{prop}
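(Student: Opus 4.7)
The plan is to derive three algebraic constraints on the four Satake parameters $\chi_i(p)$ from the Maass-space recurrence \eqref{2b} together with the explicit Hecke-operator formulas in Proposition \ref{prop3.1}. Under the (normalized) Satake isomorphism, the Hecke eigenvalues $\mu_1, \mu_2, \mu_3, \mu_4$ of $F$ at the odd prime $p$ are elementary symmetric polynomials (up to suitable powers of $p$) in $\chi_1(p),\ldots,\chi_4(p)$, so equivalently I aim to derive the three identities $\mu_1 = 1$, $\mu_4 = \mu_2$, and $(p+1)^2(\mu_3 - p(p^2+1)) = \mu_4^2$ from the definition of $\Ms$. These three identities cut the four-parameter variety of unordered Satake data down to the one-parameter family in \eqref{5.1}.

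First, I would fix a primitive $\beta_0 \in \Sp$ with $K_0 := |\beta_0|^2$, fix the corresponding $\varpi_2$-valuation $u$, and set $\xi_k := A(p^{2k} K_0, u, 1)$ for the ``base'' Fourier coefficients. By \eqref{2b}, every $A(\beta)$ with $\beta$ of the form $\varpi_2^u \cdot p^a m \cdot \beta_0$ (with $(m,p)=1$) rewrites as an explicit $\Z[p]$-linear combination of finitely many $\xi_k$'s. I then apply each $\phi_i$ to $F$ at $\beta = \beta_0$ using Proposition \ref{prop3.1}. The sums $\sum_{\alpha \in C_p} A(\beta_0 \alpha)$ and $\sum_{\alpha} A(\alpha^{-1}\beta_0)$, as well as the double sum for $\phi_3$, are evaluated by partitioning $C_p$ (resp.\ $C_p \times C_p$) according to the odd part $n'$ and the norm of the resulting argument; this is a lattice-counting problem in $\mathcal{O}_p \simeq M_2(\Z_p)$ which is solved by an elementary-divisor argument. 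Applying \eqref{2b} to collapse each $A(K',u,n')$ into $\{\xi_0,\xi_1,\xi_2\}$, I obtain four linear identities of the shape $\mu_i\,\xi_0 = R_i^{(0)}\xi_0 + R_i^{(1)}\xi_1 + R_i^{(2)}\xi_2$ with $R_i^{(j)} \in \Z[p]$.

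Elimination of $\xi_1$ and $\xi_2$ between these four relations yields the three target identities. The first, $\mu_1 = 1$, is the trivial central-character statement, since $\phi_1$ acts as a scalar. The equality $\mu_2 = \mu_4$ reflects the involution $\alpha \leftrightarrow \bar\alpha$ on $C_p$ combined with the fact that, for $F \in \Ms$, $A(\beta)$ depends only on $(K,u,n)$ and is in particular insensitive to the left/right placement of factors of norm $p$. The nontrivial third identity emerges after substituting the expressions obtained from $\phi_3$ and $\phi_4$ and clearing denominators. Setting $\lambda_p := \mu_4/(p(p+1))$, the Satake polynomial $\prod_i(X - \chi_i(p))$ then factors as $(X^2 - p^{1/2}\lambda_p X + p)(X^2 - p^{-1/2}\lambda_p X + p^{-1})$, whose roots are exactly those listed in \eqref{5.1}; the indeterminacy in choosing the square root $\sqrt{\lambda_p^2 - 4}$ is absorbed by the Weyl-group action.

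The main obstacle will be the combinatorial/lattice-counting step: for each $\alpha \in C_p$ (and each pair $(\alpha_1, \alpha_2) \in C_p \times C_p$) one must determine the odd part $n'$ and $\varpi_2$-valuation $u'$ of $\beta_0\alpha$, $\alpha^{-1}\beta_0$, and $\alpha_1^{-1}\beta_0\alpha_2$ so that \eqref{2b} may be applied. This reduces to a Cartan-type decomposition in $M_2(\Z_p)$ and requires careful bookkeeping, but it is precisely the place where the specific shape of the recurrence---in particular $A(p^2K_0,u,p) = A(p^2K_0,u,1) + p\,A(K_0,u,1)$---is matched to the Hecke sums to produce exactly the Satake configuration characteristic of $\MW(\sigma,2)$.
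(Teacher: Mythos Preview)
Your overall strategy---compute the Hecke eigenvalues $\mu_i$ via Proposition~\ref{prop3.1}, simplify using the recurrence \eqref{2b}, and show that they satisfy the relations \eqref{3.5}---is exactly the paper's approach, and the lattice-counting step you anticipate is precisely Lemma~\ref{l51} (quoted from \cite{MNP}), so you need not rederive it via elementary divisors. There are, however, two concrete problems in your execution.

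First, the indexing $\xi_k := A(p^{2k}K_0,u,1)$ with only even powers of $p$ is wrong. Applying $\phi_2$ or $\phi_4$ at a primitive $\beta_0$ with $p\nmid K_0$ produces terms $A(\bar\alpha\beta_0)$ with $|\bar\alpha\beta_0|^2 = pK_0$ and, by Lemma~\ref{l51}, $n'=1$; hence $A(pK_0,0,1)$ appears and \eqref{2b} cannot reduce it further. (Also, $\beta_0\in\Sp$ forces $u=0$, so ``fix the corresponding $u$'' is vacuous.)

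Second, and more substantively, evaluating all four $\phi_i$ only at $\beta=\beta_0$ is not enough to produce your third identity. From $\beta_0$ alone you obtain
\[
\mu_2\xi_0=\mu_4\xi_0=p(p+1)A(pK_0,0,1),\qquad
\mu_3\xi_0=(p^3+p^2+p)\xi_0+p^2A(p^2K_0,0,1),
\]
which gives $\mu_2=\mu_4$ but leaves the two new quantities $A(pK_0,0,1)$ and $A(p^2K_0,0,1)$ unrelated, so nothing links $\mu_2$ to $\mu_3$. The paper closes the system by \emph{also} applying $\phi_2$ at a $\beta'$ with $|\beta'|^2=pK_0$; after using \eqref{2b} on $A(p^2K_0,0,p)$ this yields
\[
\mu_2\,A(pK_0,0,1)=p(p+1)\bigl(A(K_0,0,1)+A(p^2K_0,0,1)\bigr),
\]
and substituting back gives exactly $(p+1)^2(\mu_3-p(p^2+1))=\mu_2^2$. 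The paper moreover treats separately the case $p\mid K$ for the chosen nonzero $A(K,0,1)$ (one cannot assume the witness is coprime to $p$), where the same two-step evaluation is required.
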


The proof of the proposition will use \cite[Lemma~5.10]{MNP}. 
\begin{lem}[{\cite[Lemma~5.10]{MNP}}] \label{l51}
	Let $\beta \in \Sp$. Then 
	
	$$\#\{\alpha \in C_p : p| \beta \alpha \} = \#\{ \alpha \in C_p : p|\alpha\beta\}
	= \begin{cases} 
	1 \quad & \text{if } p\mid |\beta|^2, \\
	0 \quad & \text{if } p\nmid |\beta|^2\\
	\end{cases}$$
	In addition, $p^2$ does not divide $\alpha \beta$ or $\beta \alpha$ for any $\alpha \in C_p$
\end{lem}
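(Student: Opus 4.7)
The plan is to reduce everything modulo the odd prime $p$, exploiting the ring isomorphism $\mathcal{O}/p\mathcal{O}\cong M_2(\mathbb{F}_p)$ (valid because $p$ is unramified in $B$). Write $\tilde{x}\in M_2(\mathbb{F}_p)$ for the image of $x\in\mathcal{O}$ under reduction, reserving $\bar{x}$ for the quaternion conjugate; since reduction is a ring homomorphism, $p\mid\beta\alpha$ becomes $\tilde{\beta}\tilde{\alpha}=0$ and $p\mid\alpha\beta$ becomes $\tilde{\alpha}\tilde{\beta}=0$. The decisive primitivity input is that $\beta\in\Sp$ forbids any odd integer from dividing $\beta$; in particular $p\nmid\beta$, so $\tilde{\beta}\neq 0$. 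For each $\alpha$ with $\nu(\alpha)=p$, the determinant satisfies $\det\tilde{\alpha}\equiv p\equiv 0\pmod p$, and $\tilde{\alpha}\neq 0$ (else $p\mid\alpha$, forcing $p^{2}\mid\nu(\alpha)$), so $\tilde{\alpha}$ has rank exactly one.

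The next step is to identify $C_p$ with $\mathbb{P}^{1}(\mathbb{F}_p)$ in two compatible ways. Via $\alpha\mapsto\alpha\mathcal{O}$, $C_p$ is in bijection with the $p+1$ principal right ideals of $\mathcal{O}$ of reduced norm $p$ (using that the Hurwitz order is of class number one); under reduction, these correspond to the $p+1$ minimal right ideals of $M_{2}(\mathbb{F}_p)$, i.e.\ to lines in $\mathbb{F}_{p}^{2}$ (the image of a rank-one generator). Symmetrically, using the anti-involution identity $\overline{\alpha u}=\bar{u}\bar{\alpha}$, the assignment $\alpha\mapsto\mathcal{O}\bar{\alpha}$ descends to a bijection from $C_p$ onto the left ideals of $\mathcal{O}$ of norm $p$, which correspond to the $p+1$ minimal left ideals of $M_{2}(\mathbb{F}_p)$. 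From $\alpha\bar{\alpha}=p=\nu(\alpha)$ (so $\alpha^{-1}=\bar{\alpha}/p$ in $B$), one derives $p\mid\beta\alpha\Longleftrightarrow\beta\in\mathcal{O}\bar{\alpha}$ and $p\mid\alpha\beta\Longleftrightarrow\beta\in\bar{\alpha}\mathcal{O}$.

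The counting then becomes pure linear algebra. For $p\mid\beta\alpha$, one asks: how many minimal left ideals of $M_{2}(\mathbb{F}_p)$ contain the nonzero $\tilde{\beta}$? Because $\det\tilde{\beta}\equiv\nu(\beta)=|\beta|^{2}\pmod p$, if $p\nmid|\beta|^{2}$ then $\tilde{\beta}$ is invertible and lies in no proper left ideal, giving count $0$; if $p\mid|\beta|^{2}$ then $\tilde{\beta}$ has rank one, and a short direct check shows it lies in exactly one minimal left ideal (the one determined by the row space of $\tilde{\beta}$), giving count $1$. The argument for $p\mid\alpha\beta$ is entirely symmetric via right ideals, so the two counts agree.

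For the ``in addition'' clause, suppose $p^{2}\mid\beta\alpha$ and write $\beta\alpha=p\delta$ with $\delta\in\mathcal{O}$. Right-multiplying by $\bar{\alpha}$ and using $\alpha\bar{\alpha}=p$ yields $\beta p=p\delta\bar{\alpha}$, whence $\beta=\delta\bar{\alpha}$. If in addition $p\mid\delta$, then $\beta\in p\mathcal{O}$, contradicting $p\nmid\beta$; so $p^{2}\nmid\beta\alpha$, and the case of $\alpha\beta$ is identical. The chief obstacle I anticipate is the noncommutative bookkeeping --- verifying that each condition $p\mid\beta\alpha$ and $p\mid\alpha\beta$ really descends to a well-defined condition on the chosen realization of $C_p$ (it is invariant under the right action of $\mathcal{O}^{\times}$ in the first case and under the left action in the second), and that the two bijections $C_p\cong\{\text{left ideals of norm }p\}$ and $C_p\cong\{\text{right ideals of norm }p\}$ are set up compatibly so that the identical $0/1$ conclusion applies on both sides.
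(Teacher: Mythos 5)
The paper does not actually prove this lemma: it is imported verbatim as Lemma~5.10 of \cite{MNP}, so there is no in-paper argument to compare yours against. Judged on its own, your proof is correct and follows the standard route for such statements at an unramified odd prime: reduce modulo $p$ via $\mathcal{O}/p\mathcal{O}\cong M_2(\mathbb{F}_p)$, observe that $\tilde{\beta}\neq 0$ (primitivity) and $\tilde{\alpha}$ has rank one ($\det\tilde{\alpha}\equiv\nu(\alpha)=p\equiv 0$ but $\tilde{\alpha}\neq0$), translate $p\mid\beta\alpha$ and $p\mid\alpha\beta$ into membership of $\beta$ in the one-sided ideals $\mathcal{O}\bar{\alpha}$ and $\bar{\alpha}\mathcal{O}$ using $\alpha\bar{\alpha}=\bar{\alpha}\alpha=p$, and count minimal one-sided ideals of $M_2(\mathbb{F}_p)$ containing $\tilde{\beta}$ (one if $\tilde\beta$ has rank one, i.e.\ $p\mid|\beta|^2$; zero if $\tilde\beta$ is invertible). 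The ``in addition'' clause via $\beta=\delta\bar{\alpha}$ and $p\nmid\beta$ is also fine. This is, in substance, the same mechanism as the source's proof, which likewise works through the local identification $\mathcal{O}_p\cong M_2(\mathbb{Z}_p)$ and explicit coset representatives.

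The one point deserving real care is exactly the one you flag yourself: $C_p$ is defined as a quotient by $\mathcal{O}^\times$ without a declared side, and the two conditions $p\mid\beta\alpha$ and $p\mid\alpha\beta$ are each invariant under unit multiplication on only one side ($\mathcal{O}\bar{\alpha}$ is unchanged under $\alpha\mapsto\alpha u$, while $\bar{\alpha}\mathcal{O}$ is unchanged under $\alpha\mapsto u\alpha$). Your two ideal-theoretic parametrizations of $C_p$ are the right device for this, but to make the statement literally well posed for a single fixed quotient you should either work with a fixed set of representatives (as \cite{MNP} does in its Hecke computations) or note that the two counts both compute the number of minimal one-sided ideals of $M_2(\mathbb{F}_p)$ containing $\tilde{\beta}$, which is manifestly the same $0$ or $1$ on both sides. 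With that bookkeeping made explicit, the proof is complete.
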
 
Here $ C_p \coloneqq \{\alpha \in \mathcal{O} | \nu(\alpha) = p \}/ \mathcal{O}^\times$ with $\#(C_p) = (p+1)$.
In terms of $A(K,u,n)$, if $A(\beta) = A(K,0,1)$ with $p\nmid K$ then $A(\alpha \beta) = A(\beta \alpha) = A(pK,0,1)$ for every $\alpha \in C_p$. If $p\mid K$ then there are unique $\alpha_1,\alpha_2 \in C_p$ (not necessarily different) such that $A(\alpha_1 \beta) = A(\beta \alpha_2) = A(pK,0,p)$ and $A(\alpha \beta) = A(\beta \alpha) = A(pK,0,1)$ in every other case.

\begin{proof}[Proof of Proposition 5.1]
	It is enough to show that the Hecke eigenvalues $\tensor*[_p]{\mu}{_1}, \tensor*[_p]{\mu}{_2}, \tensor*[_p]{\mu}{_3}, \tensor*[_p]{\mu}{_4}$ for $F$ satisfy the equation (\ref{3.5}). The fact that this is enough follows from the proof of Proposition 6.2 from \cite{MNP}.
	We will follow notation of Proposition \ref{prop3.1} for the Hecke algebra and refer to diagonal matrices given before it by $h_2, h_3$ and $h_4$ respectively. 
	 
	Since the Maass form $F$ is non-zero, at least one of the Fourier coefficients $A(K,u,n)$ is non-zero. This implies, from the Maass conditions of Definition \ref{rec}, that there exists at least one $K$ such that $A(K,0,1) \neq 0$. Let $K = p^nK_0$ where $K_0$ is co-prime to $p$. Then we claim that
	\begin{equation} \label{lambda p}
	\lambda_p = \frac{A(p^{n+1}K_0,0,1) + A(p^{n-1}K_0,0,1)}{ A(p^nK_0,0,1)}
	\end{equation} with $A(p^{n-1}K_0,0,1) = 0$ if $n=0$.
	
	\textbf{Case 1: }$K=p^0K_0$ which is to say $p \nmid K$. Let $\beta$ such that $A(\beta) = A(K,0,1)$. By Lemma \ref{l51}, for every $\alpha \in C_p$, we have $\beta \alpha^{-1} \notin \mathcal{O}$ and $p\nmid \overline{\alpha}\beta$. Therefore, $A(\beta \bar{\alpha}^{-1}) = 0$ and $A(\bar{\alpha}\beta) = A(pK,0,1)$ for every $\alpha \in C_p$.
	
	\begin{align*}
	(K_p h_2 K_p \cdot F)_{\beta} = & p\big( \sum_{\alpha \in C_p} A(\beta \bar{\alpha}^{-1}) + \sum_{\alpha \in C_p} A(\bar{\alpha}\beta)\big)\\
	= & p\big(\sum_{\alpha \in C_p} 0 + \sum_{\alpha \in C_p} A(pK,0,1)\big) \\
	= & p(p+1)A(pK,0,1) .
	\end{align*}
	
	Hence, we get $\tensor*[_p]{\mu}{_2} = p(p+1)\lambda_p$ with $\lambda_{p}$ given in \eqref{lambda p} as required. The same exact argument also proves that $\tensor*[_p]{\mu}{_4} = p(p+1)\lambda_p$.
	
	To show $\tensor*[_p]{\mu}{_3} = p^2\lambda_{p}^{2} + p^3 + p$, we use condition 2(c) of Propositon \ref{prop3.1} and get  
	\begin{align*}
	(K_p h_3 K_p \cdot F)_{\beta} = & \Big(p^2 A(p^{-1}\beta ) + p^2A(p\beta) + p \sum_{(\alpha_1,\alpha_2) \in C_p \times C_p} A(\alpha_1^{-1} \beta \alpha_2)\Big)\\
	= & p^2\cdot 0 + p^2A(p^2K,0,p)+p((p+1)A(K,0,1)) \\
	= & p^2A(p^2K,0,p)+(p^2+p)A(K,0,1)\\
	= & p^2(pA(K,0,1) + A(p^2K,0,1))+p(p+1)A(K,0,1) \numberthis \label{step5.3}\\
	= & p^3A(K,0,1) + pA(K,0,1) + p^2(A(p^2K,0,1) + A(K,0,1)).
	\end{align*}
	The $A(p^{-1}\beta )=0$ since $p\nmid \beta$. By Lemma \ref{l51}, for each $\alpha_2$ there is a unique $\alpha_1$ such that $\alpha_1 ^{-1}\beta \alpha_2 \in \mathcal{O}$. Hence, there are total $(p+1)$ copies of $A(K,0,1)$, one for each $\alpha_2$. 
For \eqref{step5.3}, we are using our recurrence relation \eqref{2b} of Definition \ref{rec} to expand $A(p^2K,0,p) = pA(K,0,1) + A(p^2K,0,1)$. 
	
It now suffices to prove $A(p^2K,0,1) + A(K,0,1) = \lambda_p ^2 A(K,0,1)$ to show $\tensor*[_p]{\mu}{_3} = p^2\lambda_{p}^{2} + p^3 + p$ as in \eqref{3.5}. However, we know $\lambda_p ^2 A(K,0,1) = \lambda_p (\lambda_p A(K,0,1)) = \lambda_p (A(pK,0,1))$ from the argument for $\tensor*[_p]{\mu}{_2}$. Let $\beta'$ be such that $A(\beta') = A(pK,0,1)$. Then, it follows that 
\begin{align*}
p(p+1)\lambda_p A(pK,0,1) = & (K_p h_2 K_p \cdot F)_{\beta'} \\ 
= &  p\big( \sum_{\alpha \in C_p} A(\beta' \bar{\alpha}^{-1}) + \sum_{\alpha \in C_p} A(\bar{\alpha}\beta')\big)\\
= & p(A(K,0,1) + A(p^2K,0,p) +pA(p^2K,0,1)) \numberthis \label{step5.4}\\
= & p(A(K,0,1) + pA(K,0,1)+ A(p^2K,0,1)+pA(p^2K,0,1)) \numberthis \label{step5.5}\\
= & p(p+1)(A(K,0,1)+A(p^2K,0,1)).
\end{align*}
We use Lemma \ref{l51} to expand out the sums to obtain \eqref{step5.4}. Since $p|pK$, there exists a unique $\alpha \in C_p$ such that $\alpha^{-1}\beta' \in \mathcal{O}$ and $A(\beta' \bar{\alpha}^{-1}) = A(K,0,1)$ for that $\alpha$. $A(\beta' \bar{\alpha}^{-1})= 0$ in the other $p$ cases. In the second sum, there exists a unique $\alpha \in C_p$ such that $A(\bar{\alpha}\beta') = A(p^2K,0,p)$. In the other $p$ cases, $A(\bar{\alpha}\beta') = A(p^2K,0,1)$. We use the recurrence relation \eqref{2b} of Definition \ref{rec} again to obtain \eqref{step5.5}. Hence,  $\tensor*[_p]{\mu}{_3} = p^2\lambda_p+p^3+p$ as required, completing the first case.
	
	\textbf{Case 2: }$K = p^nK_0$ with $n>0$. Let $\beta$ be such that $A(\beta) = A(p^nK_0,0,1)$ where $K_0$ is an even number co-prime to $p$.
	\begin{align*}
	(K_p h_2 K_p \cdot F)_{\beta} = & p\big( \sum_{\alpha \in C_p} A(\beta \bar{\alpha}^{-1}) + \sum_{\alpha \in C_p} A(\bar{\alpha}\beta)\big)\\
	= & p(A(p^{n-1}K_0,0,1) + A(p^{n+1}K_0,0,p) + pA(p^{n+1}K_0,0,1)) \\
	= & p(A(p^{n-1}K_0,0,1) + A(p^{n+1}K_0,0,1)  + pA(p^{n-1}K_0,0,1) \numberthis \label{step5.6}\\ 
	&+ pA(p^{n+1}K_0,0,1)) \\
	= & p(p+1)(A(p^{n-1}K_0,0,1) + A(p^{n+1}K_0,0,1)) .
	\end{align*}
	We use Lemma \ref{l51} again to write the sums in terms of $A(K,u,n)$. As $p|p^{n}K_0$ but $p\nmid \beta$, there exists a unique $ \alpha \in C_p$ such that $\beta \alpha^{-1} \in \mathcal{O}$, for which $A(\beta \bar{\alpha}^{-1}) = A(p^{n-1}K_0,0,1)$. As before, $A(\beta \bar{\alpha}^{-1}) = 0$ in all the other $p$ cases. In the second sum, there exists unique $\alpha\in C_p$ such that $A(\bar{\alpha}\beta) = A(p^{n+1}K_0,0,p)$. In the other $p$ cases, $A(\bar{\alpha}\beta) = A(p^{n+1}K_0,0,1)$. We obtain equation \eqref{step5.6} then by using the recurrence relation \eqref{2b} of Definition \ref{rec} to expand $A(p^{n+1}K_0,0,p)$. Hence, we get $\tensor*[_p]{\mu}{_2} = p(p+1)\lambda_p$ with $\lambda_{p}$ given in \eqref{lambda p} as required. Once again, the same exact argument also proves that $\tensor*[_p]{\mu}{_4} = p(p+1)\lambda_p$.
	
To show that $\tensor*[_p]{\mu}{_3} = \lambda_{p}^2p^2 +p^3+p$, we have to consider two subcases: $n=1$ and $n \geq 2$. We will set both cases up and prove them together.
	
	\textbf{Subcase 1:} Letting $n=1$, we get 
	\begin{align*}
	(K_p h_3 K_p \cdot F)_{\beta} = & \Big(p^2 A(p^{-1}\beta ) + p^2A(p\beta) + p \sum_{(\alpha_1,\alpha_2) \in C_p \times C_p} A(\alpha_1^{-1} \beta \alpha_2)\Big)\\
	= & p^2\cdot 0  + p^2A(p^3K_0,0,p)+p((p+1)A(pK_0,0,1) + pA(pK_0,0,1)) \\
	= & p^2(pA(pK_0,0,1) + A(p^3K_0,0,1))+(2p^2+p)A(pK_0,0,1) \numberthis \label{step5.7}\\
	= & p^3A(pK_0,0,1) + pA(pK_0,0,1) + p^2(A(p^3K_0,0,1) \\
	& + 2A(pK_0,0,1)).
	\end{align*}
We will again use Lemma \ref{l51} to simplify the terms in the summation. The first term, $ A(p^{-1}\beta ) =0$ as $p\nmid \beta$. Since $p|pk$, there exists a unique $\alpha'_2 \in C_p$ such that $p|\beta \alpha'_2$. For that $\alpha'_2$, $A(\alpha_1^{-1}\beta \alpha'_2)= A(pK_0,0,1)$ for every $\alpha_1 \in C_p$. In the other $p$ cases of $\alpha_2$'s, there exists a unique $\alpha_1 \in C_p$ such that $\alpha_1 ^{-1 }\beta \alpha_2 \in \mathcal{O}$ and $A(\alpha_1^{-1}\beta \alpha'_2)= A(pK_0,0,1)$. We use the recurrence relation \eqref{2b} of Definition \ref{rec} to expand $A(p^3K_0,0,p)$ to obtain \eqref{step5.7}. 
	
	\textbf{Subcase 2:}  $n \geq 2$
	\begin{align*}
	(K_p h_3 K_p \cdot F)_{\beta} = & \Big(p^2 A(p^{-1}\beta ) + p^2A(p\beta) + p \sum_{(\alpha_1,\alpha_2) \in C_p \times C_p} A(\alpha_1^{-1} \beta \alpha_2)\Big)\\
	= & p^2\cdot0 + p^2A(p^{n+2}K_0,0,p)+p(pA(p^nK_0,0,1)\\ 
	&+ A(p^nK_0,0,p) + pA(p^nK_0,0,1)) \\
	= & p^2(pA(p^nK_0,0,1) + A(p^{n+2}K_0,0,1))+p((2p+1)A(p^nK_0,0,1)\\ &+ pA(p^{n-2}K_0,0,1))\\
	= & p^2(A(p^{n+2}K_0,0,1) + A(p^{n-2}K_0,0,1) + 2A(p^nK_0,0,1))\\ &+(p^3+p)A(p^nK_0,0,1).
	\end{align*}
	Once again, using Lemma \ref{l51} to simplify the summation, we get $A(p^{-1}\beta )=0$ as $p\nmid \beta$. Since $p|p^nK_0$, there exists a unique $\alpha'_2 \in C_p$ such that $p|\beta \alpha'_2$. Since now $p\mid \beta \alpha'_2$, there exists unique $\alpha_1 \in C_p$ such that $(\alpha_1^{-1}\beta \alpha'_2) =A(p^nK_0,0,p)$. In other $p$ cases of $\alpha_1$, $A(\alpha_1^{-1}\beta \alpha'_2) = A(p^nK_0,0,1)$. In the other $p$ cases of $\alpha_2$'s there exists a unique $\alpha_1$ corresponding to each $\alpha_2$ such that $\alpha_1 ^{-1 }\beta \alpha_2 \in \mathcal{O}$ and $A(\alpha_1^{-1}\beta \alpha'_2) = A(p^nK_0,0,1)$. We use recurrence relation \eqref{2b} twice, more precisely, once for the $p^2A(p^{n+2}K_0,0,p)$ and once for $A(p^nK_0,0,p)$. 
	
	Now, to prove $\tensor*[_p]{\mu}{_3} = p^2\lambda_p+p^3+p$ in both the subcases, it suffices to show that $A(p^{n+2}K_0,0,1) + A(p^{n-2}K_0,0,1) + 2A(p^nK_0,0,1) = \lambda_p^2 A(p^nK_0,0,1)$ where $A(p^{n-2}K_0,0,1) = 0$ for the first subcase. For this, we use that $\lambda_p\big(A(p^{n+1}K_0,0,1) + A(p^{n-1}K_0,0,1)\big) = \lambda_p^2 A(p^nK_0,0,1)$ from before. All that is left to show now is that $A(p^{n+2}K_0,0,1) + A(p^{n}K_0,0,1) = \lambda_p A(p^{n+1}K_0,0,1)$ and $A(p^{n}K_0,0,1) + A(p^{n-2}K_0,0,1) = \lambda_p A(p^{n-1}K_0,0,1)$. Both of these are easy to prove and follow from the computation of $(K_p h_2 K_p \cdot F)_\beta$ done at the start of \textbf{Case 2}.
	
	Thus, the Hecke eigenvalues $\tensor*[_p]{\mu}{_1}, \tensor*[_p]{\mu}{_2}, \tensor*[_p]{\mu}{_3}, \tensor*[_p]{\mu}{_4}$ for $F$ satisfy the equation (\ref{3.5}) as required. Rest of the proof follows from the proof of Proposition 6.2 in \cite{MNP}.
\end{proof}

Proposition \ref{prop5.1} gives us the exact structure of $\Pi_{F,p}$ for all odd primes $p$. Next we give a description of $\Pi_{F,2}$ and $\Pi_{F,\infty}$.

\begin{prop} \label{Prop5.2}
a) The local component $\Pi_{F,2}$ is the unique irreducible constituent of the unramified principal series representation $\text{Ind}_{\mathcal{B}_2(B_2)}^{\GL_2(B_2)}(\chi_1 \times \chi_2)$ with $\chi_1 ,\chi_2$ unramified characters of $B^\times$ such that
$$
\chi_1(\varpi_2) = -\sqrt{2}\epsilon, \chi_2(\varpi_2) = -1/\sqrt{2}\epsilon.
$$

b) At the prime $p= \infty$, the archimedean component $\Pi_{F,\infty}$ is isomorphic to the principal series representation $\text{Ind}^{\GL_2(\mathbb{H})}_{\mathcal{B}_2(\HH)}(\chi_{\pm \frac{\sqrt{-1}r}{2}})$ where
$$\chi_s\Big(\begin{pmatrix}
a & *\\
0 & d
\end{pmatrix} \Big) = \nu(ad^{-1})^s$$
\end{prop}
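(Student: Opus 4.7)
The plan is to extract the Satake parameters of $\Pi_{F,2}$ from the action of the local Hecke algebra at $p=2$, and to obtain $\Pi_{F,\infty}$ by matching the archimedean data that $F$ already carries with the identification established in Section 6.1 of \cite{MNP}. In both cases, once the relevant numerical invariants are pinned down, the heavy lifting in converting them into explicit principal series data is already available in \cite{MNP}.

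For part (a), I would first compute the Hecke eigenvalues $\tensor*[_2]{\mu}{_1}$ and $\tensor*[_2]{\mu}{_2}$ of $F$. The operator $\varphi_1$ is supported on the central double coset $K_2\, \mathrm{diag}(\varpi_2,\varpi_2)\, K_2$, so it acts by a scalar coming from the central character; combined with the $Z^+$-invariance built into $\MG$, this yields $\tensor*[_2]{\mu}{_1} = 1$. For $\varphi_2$, Proposition \ref{prop3.1}(1) gives
\[
(\varphi_2 \cdot F)_\beta \;=\; 2\bigl(A(\beta \varpi_2^{-1}) + A(\beta \varpi_2)\bigr).
\]
Writing $\beta = \varpi_2^u n\beta_0$ with $K = |\beta|^2$, and using condition (1) of Definition \ref{rec} together with the fact that right-multiplication by $\varpi_2^{\pm 1}$ shifts the $\varpi_2$-adic valuation by $\pm 1$ and preserves the odd part $n$, the right-hand side becomes $2\bigl(A(K/2, u-1, n) + A(2K, u+1, n)\bigr)$. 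Applying the recurrence \eqref{2a} at the shifted index $u+1$ gives
\[
A(2K, u+1, n) \;=\; \tfrac{-3\epsilon}{\sqrt{2}}\,A(K,u,n) \;-\; A(K/2, u-1, n),
\]
so the two copies of $A(K/2,u-1,n)$ cancel and $(\varphi_2 \cdot F)_\beta = -3\sqrt{2}\,\epsilon\,A(K,u,n)$, hence $\tensor*[_2]{\mu}{_2} = -3\sqrt{2}\,\epsilon$. These match the eigenvalues in \eqref{3.6}; the inversion of the Satake isomorphism performed in Section 6.1 of \cite{MNP} therefore identifies $\Pi_{F,2}$ as the unique spherical constituent of $\ind_{\Bg(B_2)}^{\GL_2(B_2)}(\chi_1 \times \chi_2)$ with $\chi_1(\varpi_2) = -\sqrt{2}\,\epsilon$ and $\chi_2(\varpi_2) = -1/(\sqrt{2}\,\epsilon)$.

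For part (b), the archimedean component $\Pi_{F,\infty}$ is essentially forced by the data already imposed on $F$: the Casimir eigenvalue is pinned to $-\tfrac{1}{2}\bigl(r^2/4 + 1\bigr)$ by membership in $\MG$, while right $K$-invariance together with moderate growth and cuspidality identify $\Pi_{F,\infty}$ with the principal series $\ind^{\GL_2(\HH)}_{\Bg(\HH)}(\chi_{\pm\sqrt{-1}r/2})$ exactly as in Section 6.1 of \cite{MNP}. No new content beyond citing that identification should be required here.

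I expect the main obstacle to be the careful bookkeeping underlying the substitution $A(\beta\varpi_2^{\pm 1}) \mapsto A(2K, u\pm 1, n)$ and $A(K/2, u-1, n)$: since $\varpi_2 = 1+i$ is non-central in $B$, one must verify that right-multiplication by $\varpi_2^{\pm 1}$ interacts with the normal form $\beta = \varpi_2^u n\beta_0$ in the expected way at the level of the invariants $(K,u,n)$ — in particular, that the odd part $n$ is genuinely unaffected. Once this is settled, the eigenvalue computation collapses immediately, and both (a) and (b) follow by quoting the Satake-inversion arguments already in Section 6.1 of \cite{MNP}.
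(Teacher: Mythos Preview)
Your proposal is correct and follows essentially the same route as the paper: derive $\tensor*[_2]{\mu}{_2} = -3\sqrt{2}\,\epsilon$ directly from the recurrence \eqref{2a}, then invoke Sections~5.2 and~6.1 of \cite{MNP} for the Satake inversion at $p=2$ and the Casimir-eigenvalue argument at $p=\infty$. The bookkeeping worry you flag is harmless, since $\varpi_2$ normalizes $\mathcal{O}$ (so $\beta\varpi_2 = \varpi_2^{u+1} n\,(\varpi_2^{-1}\beta_0\varpi_2)$ with the conjugate still in $\mathcal{S}^{prim}$), and the paper in fact passes over this point silently.
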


\begin{proof}[Proof of Proposition \ref{Prop5.2}]
	a) For the structure of $\Pi_{F,2}$, it is again enough to show that the Hecke eigenvalues $_2\mu_1$, $\tensor*[_2]{\mu}{_2}$ satisfy the equation (\ref{3.6}). The proof of this is simpler than the odd prime case. From the Maass space condition \eqref{2a} in Definition \ref{rec}, we have 
	$$A(2K,u+1,n) = \frac{-3\epsilon}{\sqrt{2}}A(K,u,n) - A(\frac{K}{2},u-1,n)$$
	 which gives us
	 $$2(A(2K,u+1,n) + A(K/2,u-1,n)) = -3\sqrt{2}\epsilon A(K,u,n).$$
	 Let $\beta \in \Ss$ such that $A(\beta) = A(K,u,n)$. Then, in terms of $\beta$, the above condition can be written as
	$$2(A(\beta \varpi_2) + A(\beta \varpi_2^{-1})) = -3\sqrt{2}\epsilon A(\beta).$$
	Comparing with condition 1 of Proposition 3.1, we get that the Hecke eigenvalue $\tensor*[_2]{\mu}{_2} = -3\sqrt{2}\epsilon$. The rest of the argument follows from Sections 5.2 and 6.1 of \cite{MNP}.
	
b) The proof for the structure of $\Pi_{F,\infty}$ is the same as in Section 6.1 of \cite{MNP} since we still have a Maass form with Casimir eigenvalue $-\frac{1}{2}\left(\frac{r^2}{4} +1\right)$.
\end{proof}

\subsection{Jacquet Langlands correspondence}
Let $B_\A$ denote the adelization of $B$ with $B_p = B \otimes_\Q \Q_p$ as before. 
Badulescu and Renard in \cite{4} give a map $\textbf{G}$ from the automorphic representations on $\GL_2(B_\A)$ to those on $\GL_4(\A)$. Let $\Pi$ denote the image of $\Pi_F$ under $\textbf{G}$.

We will say $\pi = \MW(\sigma,k)$ if a discrete series representation $\pi$ is the unique irreducible quotient of the induced representation $\nu^{(k-1)/2}\sigma \times \nu^{(k-3)/2}\sigma \times \ldots \times \nu^{-(k-1)/2}\sigma$. Here $\sigma$ is cuspidal and $\nu$ is the global character given by product of local characters i.e. absolute value of reduced norm. For $\GL_4$, the possible values of $k$ will be 1,2 or 4. In each of these cases, $\sigma$ will be a cuspidal representation of $\GL_4$, $\GL_2$ and $\GL_1$ respectively over the appropriate group.

By Proposition 18 part b) of \cite{4}, since $\Pi_F$ is cuspidal, its image $\Pi$ is of the form $\MW(\sigma,k_\sigma)$. By Proposition 18 part a) of \cite{4}, $k_\sigma \mid d$ when the dimension of the division algebra is $d^2$. In our case, the division algebra is a quaternion algebra, so $d=2$. Hence, $k_\sigma \mid 2$ implying $k_\sigma = 2$ or $k_\sigma = 1$. The latter condition is same as $\sigma$ being cuspidal.

\begin{prop} \label{prop5.3}
	Let $F \in \Ms$ be a cuspidal Hecke eigenform with $\Pi_F$ the associated representation of $\GL_2(B_\A)$. Then $\textbf{G}(\Pi_F) = \MW(\sigma, 2)$ for some cuspidal automorphic representation $\sigma$ of $\GL_2(\A)$. 
\end{prop}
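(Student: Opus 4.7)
The plan is to apply Proposition~18 of \cite{4} once more. As noted in the paragraph preceding the proposition, that result already forces $\Pi = \textbf{G}(\Pi_F) = \MW(\sigma, k_\sigma)$ with $k_\sigma \in \{1, 2\}$, so the problem reduces to excluding $k_\sigma = 1$, i.e.\ to showing that $\Pi$ is not itself cuspidal on $\GL_4(\A)$.

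First I would read off the structure of the local components from Proposition~\ref{prop5.1}. Introducing unramified characters $\eta_1, \eta_2$ of $\Q_p^\times$ by
\[
\eta_1(p) = \frac{\lambda_p + \sqrt{\lambda_p^2 - 4}}{2}, \qquad \eta_2(p) = \frac{\lambda_p - \sqrt{\lambda_p^2 - 4}}{2},
\]
one reads off $\chi_1 = |\cdot|_p^{1/2}\eta_1$, $\chi_3 = |\cdot|_p^{-1/2}\eta_1$, $\chi_2 = |\cdot|_p^{1/2}\eta_2$, and $\chi_4 = |\cdot|_p^{-1/2}\eta_2$; note also that $\eta_1\eta_2 = 1$, so that $\eta_2 = \eta_1^{-1}$. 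Setting $\tau_p := \ind_{\Bg(\Q_p)}^{\GL_2(\Q_p)}(\eta_1 \times \eta_2)$ and applying induction in stages, the unramified principal series of Proposition~\ref{prop5.1} is isomorphic to
\[
\ind_{P_{2,2}(\Q_p)}^{\GL_4(\Q_p)}\bigl(|\det|_p^{1/2}\tau_p \times |\det|_p^{-1/2}\tau_p\bigr),
\]
so that $\Pi_{F,p}$ is exactly the local analogue $\MW_{\mathrm{loc}}(\tau_p, 2)$ of the global $\MW$-construction of \cite{4}.

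Next I would invoke two standard facts. On the one hand, every cuspidal automorphic representation of $\GL_n(\A)$ is globally generic (Shalika), hence locally generic at every place. On the other hand, the local Langlands quotient $\MW_{\mathrm{loc}}(\tau_p, 2)$ admits no Whittaker model; this is a consequence of the Bernstein--Zelevinsky classification of irreducible subquotients of parabolic inductions on $\GL_4$ (the Speh-type constituents of such inductions are precisely the non-generic ones). These two statements are incompatible, so $\Pi$ cannot be cuspidal and we must have $k_\sigma = 2$. That the resulting $\sigma$ is a cuspidal automorphic representation of $\GL_{4/k_\sigma}(\A) = \GL_2(\A)$ is then part of the statement of Proposition~18 of \cite{4}.

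The delicate step is the non-genericity of $\MW_{\mathrm{loc}}(\tau_p, 2)$; this is precisely the place where the special form of the Satake parameters furnished by Proposition~\ref{prop5.1} (and ultimately by the Maass recurrence \eqref{2b}) is genuinely used. A slightly less refined alternative, avoiding Whittaker-model bookkeeping, is to observe that $|\chi_1(p)/\chi_3(p)| = p$ grossly violates the Jacquet--Shalika unitarity bounds on Satake parameters of a cuspidal automorphic representation of $\GL_4(\A)$, which on its own suffices to exclude $k_\sigma = 1$ at any single odd prime.
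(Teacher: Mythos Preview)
Your proposal is correct, and in fact you outline two independent arguments, either of which suffices.

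Your \emph{alternative} argument is essentially the paper's own proof. The paper assumes $k_\sigma=1$ and appeals to the quantitative bound toward Ramanujan from Sarnak's notes \cite{7}, namely $\bigl|\log_p|\alpha_i(\Pi_p)|\bigr|\le \tfrac{1}{2}-\tfrac{1}{17}$ for a cuspidal $\Pi$ on $\GL_4$, and then checks that the explicit parameters $p^{\pm 1/2}\cdot\tfrac{\lambda_p\pm\sqrt{\lambda_p^2-4}}{2}$ of Proposition~\ref{prop5.1} cannot all satisfy it. Your version uses instead the cruder Jacquet--Shalika bound $p^{-1/2}<|\alpha_i|<p^{1/2}$; since $|\chi_1(p)/\chi_3(p)|=p$ forces one of $|\chi_1(p)|\ge p^{1/2}$ or $|\chi_3(p)|\le p^{-1/2}$, the strict inequalities already give the contradiction. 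So this is the same mechanism with a weaker (but adequate) input.

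Your \emph{main} argument via genericity is a genuinely different route. The point that $\chi_1/\chi_3=|\cdot|_p$ makes the unramified principal series reducible, so its spherical constituent is the Langlands quotient and therefore (by the standard module conjecture, proved for $\GL_n$ via Bernstein--Zelevinsky) non-generic; combined with the global genericity of cuspidal representations on $\GL_n(\A)$ this rules out $k_\sigma=1$ without any analytic bounds. This is cleaner and more conceptual than the paper's approach; its only cost is that it imports the Whittaker-model machinery rather than a single numerical inequality. One small caveat: you phrase the identification as $\Pi_{F,p}=\MW_{\mathrm{loc}}(\tau_p,2)$, which tacitly assumes $\tau_p$ is irreducible; strictly speaking $\eta_1/\eta_2$ could equal $|\cdot|_p^{\pm1}$ for some $\lambda_p$, but your argument survives this, since the reducibility of the $\GL_4$ principal series (hence non-genericity of its spherical constituent) only needs $\chi_1/\chi_3=|\cdot|_p$, which holds regardless.
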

\begin{proof}
	We will show that $ \textbf{G}(\Pi_F) = \Pi$ is not cuspidal, which is equivalent to showing $k_\sigma \neq 1$. Since $k_\sigma = 1$ or $k_\sigma =2$, this proves the proposition.
	
	For the sake of contradiction, assume $k_\sigma =1$. Therefore, $\Pi$ is a cuspidal automorphic representation of $\GL_4(\A)$. Then, by equation (14) of Sarnak \cite{7}, we get 
	$$\left|\log_p\left(\left|\alpha_i\left(\Pi_p\right)\right|_p\right)\right| \leq \frac{1}{2} - \frac{1}{4^2 +1}.$$
	Here $\alpha_i(\Pi_p)$ denotes the i-th Satake parameter of $\Pi$ at prime $p$, $| \mid_p$ denotes the p-adic valuation and the outer $| \mid$ denotes the standard absolute value. We have $\alpha_i(\Pi_p) = \chi_{i}(p)$ with $\chi_{i}(p)$ as given in \eqref{5.1}. This, in particular, tells us that 
	$$\left|\log_p \left( \left|p^{1/2} \frac{\lambda_p \pm \sqrt{\lambda_p ^2 - 4}}{2}\right|_p \right)\right| \leq \frac{1}{2} - \frac{1}{4^2 +1}$$ and 
	$$\left|\log_p \left( \left|p^{-1/2} \frac{\lambda_p \pm \sqrt{\lambda_p ^2 - 4}}{2}\right|_p \right)\right| \leq \frac{1}{2} - \frac{1}{4^2 +1}.$$
	Therefore, we can write
	\begin{equation} 
	 \left|\log_p \left(\left|p^{\pm 1/2}\right|_p\right) + \log_p \left( \left| \frac{\lambda_p \pm \sqrt{\lambda_p ^2 - 4}}{2}\right|_p \right)\right| \leq \frac{1}{2} - \frac{1}{17}.\label{1/2-}
	 \end{equation}
	
	Let $\log_p \left( \left| \frac{\lambda_p + \sqrt{\lambda_p ^2 - 4}}{2} \right|_p \right) = \alpha^+$ and $\log_p \left(\left|\frac{\lambda_p - \sqrt{\lambda_p ^2 - 4}}{2} \right|_p\right) = \alpha^-$ for convenience of notation. Note that $\alpha^+ + \alpha^- = 0$. Then \eqref{1/2-} implies that
	\begin{align*}
	\left|\frac{1}{2} + \alpha^+ \right| \leq \frac{1}{2} - \frac{1}{17} &&
	\left|\frac{1}{2} + \alpha^- \right| \leq \frac{1}{2} - \frac{1}{17} \\
	\left|\frac{-1}{2} + \alpha^+ \right| \leq \frac{1}{2} - \frac{1}{17} &&
	\left|\frac{-1}{2} + \alpha^- \right| \leq \frac{1}{2} - \frac{1}{17}
	\end{align*}
	In particular, we get that $$-\frac{1}{2} + \frac{1}{17} \leq \frac{1}{2} + \alpha^+  \leq \frac{1}{2} - \frac{1}{17} \quad \text{ and } \quad-\frac{1}{2} + \frac{1}{17} \leq \frac{-1}{2} + \alpha^+ \leq \frac{1}{2} - \frac{1}{17}.$$ 
	Simplifying, we get
	$$ -1 + \frac{1}{17} \leq  \alpha^+  \leq - \frac{1}{17} \text{\quad and \quad } \frac{1}{17} \leq  \alpha^+  \leq 1 - \frac{1}{17}$$
	both of which cannot be simultaneously true. This gives us a contradiction to our starting assumption that $k_\sigma = 1$. Hence, $k_\sigma \neq 1$ which implies $k_\sigma =2 $ as required.
\end{proof}

From Proposition \ref{prop5.3} we obtain an irreducible cuspidal automorphic representation $\sigma$ of $\GL_2(\A)$. We will next describe the local components of $\sigma$ and use that to construct $f \in \SG$, completing the proof of Theorem \ref{Thm5.1}.

\subsection{Description of $\sigma$}

Let $\sigma$ be as from Proposition \ref{prop5.3}, with $\sigma \simeq \otimes'_{p \leq \infty} \sigma_p$. 
For an odd prime $p$, let $\chi_{p}$ be the unramified character of $\Q_p^\times$ such that $\chi_{p} (p) = \frac{\lambda_p + \sqrt{\lambda_p ^2 - 4}}{2}$ for $\lambda_{p}$ as in Proposition \ref{prop5.1}. At the prime $p= \infty$, let $\chi_{\infty}(a) = |a|^s$ where $s = \frac{\sqrt{-1}r}{2}$. For the prime $p=2$, let $\chi$ be an unramified character of $\Q_2^\times$ with $\chi(2) = - \epsilon$ for $\epsilon$ as in condition \eqref{2a} of Definition \ref{rec}.

\begin{prop}\label{Prop5.4}
Let $\sigma \simeq \otimes'_{p \leq \infty} \sigma_p$ be the irreducible cuspidal automorphic representation of $\GL_2(\A)$ from Proposition \ref{prop5.3}. Then
	\begin{equation} \label{5.4}
	\sigma_p = 
	\begin{cases}
	\ind_{\mathcal{B}_2(\Q_p)}^{\GL_2(\Q_p)}(\chi_{p}\times \chi_{p}^{-1}) \quad &\text{ for odd } p< \infty, \\
	\chi St_{\GL_2} \quad &\text{ for } p = 2, \\
	\ind_{\Bg(\R)}^{\GL_2(\R)}(\chi_\infty \times \chi_\infty^{-1}) \quad &\text{ for } p = \infty.
	\end{cases}
	\end{equation}
\end{prop}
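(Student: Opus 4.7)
The plan is to identify $\sigma_p$ place by place, using the known structure of $\Pi_{F,p}$ from Propositions \ref{prop5.1} and \ref{Prop5.2} together with the fact that, since $\textbf{G}(\Pi_F) = \MW(\sigma,2)$ by Proposition \ref{prop5.3}, the local component $\Pi_p$ must be the Langlands quotient of the induced representation built from $\nu^{\pm 1/2}\sigma_p$ on $\GL_4(\Q_p)$ (respectively $\GL_4(\R)$ at $\infty$). At each place I will set up $\sigma_p$ as in \eqref{5.4}, compute $\ind(\nu^{1/2}\sigma_p \times \nu^{-1/2}\sigma_p)$, and verify that its Langlands quotient matches $\Pi_p$ under the local Jacquet--Langlands correspondence of Badulescu--Renard.

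At an odd finite prime $p$, $B_p \cong M_2(\Q_p)$ is split, so the local Jacquet--Langlands correspondence is the identity and $\Pi_p = \Pi_{F,p}$ is described by Proposition \ref{prop5.1}. Setting $\sigma_p = \ind_{\Bg(\Q_p)}^{\GL_2(\Q_p)}(\chi_p \times \chi_p^{-1})$ with $\chi_p(p) = (\lambda_p + \sqrt{\lambda_p^2-4})/2$, a direct computation shows that $\ind(\nu^{1/2}\sigma_p \times \nu^{-1/2}\sigma_p)$ has Satake parameters equal to the four characters of \eqref{5.1}, up to the Weyl group action; hence both $\Pi_p$ and the local component of $\MW(\sigma,2)$ are the unique Langlands quotient of the same principal series and must agree. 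At $p = \infty$, a parallel parameter matching, applied to the principal series on $\GL_2(\HH)$ in Proposition \ref{Prop5.2}b via the archimedean Jacquet--Langlands correspondence, immediately yields $\sigma_\infty = \ind_{\Bg(\R)}^{\GL_2(\R)}(\ci \times \ci^{-1})$ with $\ci(a) = |a|^{\sqrt{-1}r/2}$.

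The main obstacle is $p = 2$: by Proposition \ref{Prop5.2}a the characters on $B_2^\times$ satisfy $\chi_1\chi_2^{-1}(\varpi_2) = 2 = |\varpi_2|_{B_2}^{-1}$, so $\Pi_{F,2}$ is a reducibility-point constituent whose image under local Jacquet--Langlands must be a Speh-type discrete series of $\GL_4(\Q_2)$. This forces it to be of the form $\MW(\chi \cdot St_{\GL_2},2)$ for an unramified character $\chi$ of $\Q_2^\times$; this is the only way for $\MW(\sigma_2,2)$ to carry the correct Langlands parameter. To pin down $\chi(2)$, I would compare Weil--Deligne parameters: that of $\chi \cdot St_{\GL_2}$ is $\chi|\cdot|^{1/2}\oplus\chi|\cdot|^{-1/2}$, and tracing the Badulescu--Renard correspondence with the data $\chi_1(\varpi_2) = -\sqrt{2}\epsilon$ and $\chi_2(\varpi_2) = -1/(\sqrt{2}\epsilon)$ forces $\chi(2) = -\epsilon$. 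The delicate part is the sign convention: one must track the Jacquet--Langlands sign and the reduced-norm normalization of the absolute value on $B_2^\times$ carefully to conclude $\chi(2) = -\epsilon$ rather than $+\epsilon$.
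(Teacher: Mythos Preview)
Your proposal is correct and follows essentially the same place-by-place strategy as the paper: identify $\Pi_p$ via the local Jacquet--Langlands map, write it as the Langlands quotient of an induced representation, and read off $\sigma_p$ using the uniqueness of the Moeglin--Waldspurger datum. The only differences are in the verification details: at odd primes the paper builds an explicit intertwining operator $L:\ind_{P_{2,2}}^{\GL_4}(\nu^{1/2}\sigma_p\times\nu^{-1/2}\sigma_p)\to\ind_{\BG}^{\GL_4}(\chi')$ by $(Lh)(g)=h(g)(I_2,I_2)$ rather than matching Satake parameters, and at $p=2$ it first recognizes $\Pi_{F,2}=\MW(\chi',2)$ for the one-dimensional representation $\chi'=\chi\circ\nu'$ of $B_2^\times$ with $\chi'(\varpi_2)=-\epsilon$, then invokes the $B_2^\times\leftrightarrow\GL_2(\Q_2)$ Jacquet--Langlands correspondence (Bushnell--Henniart) to get $\sigma_2=\chi\,St_{\GL_2}$ with $\chi(2)=-\epsilon$ directly, which cleanly avoids the sign-tracking you flag as delicate.
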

\begin{proof} 

The local Jacquet-Langlands map $\textbf{C}$ allows us to explicitly write down $\sigma_p$ at each prime $p$. The local map $\textbf{C}$ is identity at every odd prime $p$ by \cite{4}. At the ramified places, it is given in Theorem 3.2 in \cite{5} for prime $p=2$ and in Section 1.3 of \cite{4} for $p= \infty$. 

Since the $\textbf{C}$ map is identity at every odd prime $p$, we have $\Pi_{F,p} = \Pi_p$ where $\Pi_p$ is the local component of $\Pi$ at prime $p$. Let $P_{2,2}$ denote the 2,2-parabolic subgroup of $\GL_4$. From Proposition \ref{prop5.3} we know that $\Pi_{F,p} = \MW(\sigma_p,2)$. We also know that $\Pi_{F,p}$ is the spherical component of $\ind^{\GL_4(\Q_p)}_{\BG(\Q_p)}(\chi_1 \times \chi_2 \times\chi_3 \times\chi_4)$. We will denote $\ind^{\GL_4(\Q_p)}_{\BG(\Q_p)}(\chi_1 \times \chi_2 \times\chi_3 \times\chi_4)$ by $\ind_{\BG(\Q_p)}^{\GL_4(\Q_p)}(\chi')$ for ease of notation. The reduced norm here is just $\nu = |\det|$.

If $\Pi_{F,p} = \MW(\sigma_p,2)$, then such a $\sigma_p$ is unique (see \cite{13} Section 8). Hence, to show the structure of $\sigma_p$, it is enough to prove the following claim:
\begin{claim}
For every odd prime $p$, $$\ind^{\GL_4(\Q_p)}_{\BG(\Q_p)}(\chi_1 \times \chi_2 \times\chi_3 \times\chi_4) \simeq \ind^{\GL_4(\Q_p)}_{P_{2,2}(\Q_p)}(\nu^{1/2}\sigma_p \times \nu^{-1/2} \sigma_p)$$ 
for 
$\sigma_p = \ind_{\Bg(\Q_p)}^{\GL_2(\Q_p)}(\chi_p \times \chi_p ^{-1})$.
\end{claim}

Using method similar to 6.5 in \cite{3}, define the map 
$$L:\ind_{P_{2,2}(\Q_p)} ^{\GL_4(\Q_p)}( \nu^{1/2}\sigma_p \times \nu^{-1/2} \sigma_p) \rightarrow \ind_{\BG(\Q_p)}^{\GL_4(\Q_p)}(\chi')$$
by 
$$(Lh)(g) := (h(g))(I_2, I_2).$$
	
	Here $h$ is a function in $\ind_{P_{2,2}(\Q_p)} ^{\GL_4(\Q_p)}( \nu^{1/2}\sigma_p \times \nu^{-1/2}\sigma_p)$ and $I_n$ is the identity matrix in $\GL_n(\Q_p)$. 
	We have to show this map is well defined and an isomorphism.
	
	To show that $L$ is well-defined we have to prove that for any $A \in \BG$, $Lh$ satisfies $(Lh)(Ag) = \delta_{\BG}^{1/2}(A)\chi'(A)(Lh)(g)$. Write $A = \begin{bmatrix}
	a & * & * & * \\
	0 & b & * & * \\
	0 & 0 & c & * \\
	0 & 0 & 0 & d \\
	\end{bmatrix}$. \\
	We have 
	\begin{align*}
	\delta_{\BG}^{1/2}(A)&\chi'(A)(Lh(g))=\\ &=|a^3bc^{-1}d^{-3}|^{1/2}\chi_1(a)\chi_2(b)\chi_3(c)\chi_4(d)(Lh(g))\\
	&= |a|^{3/2}|b|^{1/2}|c|^{-1/2}|d|^{-3/2}|a|^{1/2}\chi_p(a)|b|^{1/2}\chi_p ^{-1}(b)\\ &\quad |c|^{-1/2}\chi_p(c)|d|^{-1/2}\chi_p^{-1}(d)(Lh(g))\\
	&=|a|^2 |b| |c|^{-1} |d|^{-2} \chi_{p}(a)\chi_{p}^{-1}(b)\chi_{p}(c)\chi_{p}^{-1}(d)(Lh(g))
	\end{align*}
	
	On the other hand
	\begin{align*}
	(Lh)(Ag) 
	&=\Big|\det \begin{bmatrix}
	a & * \\
	0 & b \\
	\end{bmatrix}\Big| \Big|\det \begin{bmatrix}
	c & * \\
	0 & d \\
	\end{bmatrix}\Big| ^{-1} \nu^{1/2}(\begin{bmatrix}
	a & * \\
	0 & b \\
	\end{bmatrix})
	\\ & \quad \nu^{-1/2}(\begin{bmatrix}
	c & * \\
	0 & d \\
	\end{bmatrix})
	\Big(\sigma_p
	\big((\begin{bmatrix}
	a & * \\
	0 & b \\
	\end{bmatrix}) \otimes \sigma_p(\begin{bmatrix}
	c & * \\
	0 & d \\
	\end{bmatrix})\big)h(g)\Big)(I_2 \otimes I_2)\\
	&= |a||b||c|^{-1}|d|^{-1}|a|^{1/2}|b|^{1/2} |c|^{-1/2}|d|^{-1/2}h(g)(\begin{bmatrix}
	a & * \\
	0 & b \\
	\end{bmatrix} \otimes \begin{bmatrix}
	c & * \\
	0 & d \\
	\end{bmatrix})\\
	&= |a||b||c|^{-1}|d|^{-1}|a|^{1/2}|b|^{1/2}|c|^{-1/2}|d|^{-1/2} \\ & \quad |a|^{1/2}|b|^{-1/2}\chi_p(a)\chi_p ^{-1}(b)|c|^{1/2}|d|^{-1/2}\chi_p(c)\chi_p^{-1}(d)h(g)(I_2 \otimes I_2 )\\
	&=|a|^2 |b| |c|^{-1} |d|^{-2} \chi_{p}(a)\chi_{p}^{-1}(b)\chi_{p}(c)\chi_{p}^{-1}(d)h(g)(I_2 \otimes I_2)\\
	&=|a|^2 |b| |c|^{-1} |d|^{-2} \chi_{p}(a)\chi_{p}^{-1}(b)\chi_{p}(c)\chi_{p}^{-1}(d)(Lh(g))\\
	&=\delta_{\BG}^{1/2}(A)\chi'(A)(Lh(g))
	\end{align*}
	
	To prove injectivity, we look at two functions $h_1$ and $h_2$ in $\ind_{P_{2,2}(\Q_p)} ^{\GL_4(\Q_p)}( \nu^{1/2}\sigma_p \times \nu^{-1/2}\sigma_p)$. 
	By definition, then $Lh_1 = Lh_2$ implies $h_1(g)(I_2,I_2) = h_2(g)(I_2,I_2)$ for every $g$. 
	Applying $( (\nu^{1/2}\sigma_p)(s_1) \times (\nu^{-1/2}\sigma_p)(s_2))$ for $s_1,s_2 \in \GL_2(\Q_p)$ to both sides, we get $h_1(g)(s_1 , s_2) = h_2(g)(s_1 , s_2)$. 
	Therefore, $h_1 = h_2$. 
	
	To show that it is an isomorphism, we construct an inverse map $$\tilde{L}:\ind_{\BG(\Q_p)}^{\GL_4(\Q_p)}(\chi')\rightarrow \ind_{P_{2,2}(\Q_p)} ^{\GL_4(\Q_p)}( \nu^{1/2}\sigma_p \times \nu^{-1/2} \sigma_p) $$ given by $$(\tilde{L}h)(g)(b_1 , b_2) =  h\Big(\begin{bmatrix}
	b_1 & 0 \\
	0 &  b_2\\
	\end{bmatrix}g\Big)$$ for $b_1,b_2 \in \mathcal{B}_2(\Q_p)$. We can verify that it is well defined by similar computation as above and it is easy to see that $L\circ \tilde{L}$ is identity. Hence $L$ is an isomorphism of representation and that proves the claim.

For the case of $p = \infty$, 

\begin{claim}
	At prime $p =\infty$,  
$$|\textbf{C}|(\Pi_{F,\infty}) \simeq \ind^{\GL_4(\R)}_{P_{2,2}(\R)}(\nu^{1/2}\sigma_\infty \times \nu^{-1/2} \sigma_\infty)$$ for $\sigma_\infty = \ind_{\Bg(\R)}^{\GL_2(\R)}(\chi_\infty \times \chi_\infty ^{-1})$ with $\chi_\infty (a) = |a|^s$ and $s = \frac{\sqrt{-1}r}{2}$. 
\end{claim}

For $p=\infty$, note that calculations in Section 6 of \cite{MNP} for the description of $\Pi_\infty$ are for a general element $F \in \MG$ and are independent of any lifting properties. 
Hence, $\Pi_{F,\infty}$ is the irreducible component of $\ind_{\Bg(\HH)}^{\GL_2(\HH)}( \chi'_s \times \chi_s ^{\prime-1})$ with $\chi'_s = \nu ^{\prime s} (x)$ and $s = \frac{\sqrt{-1}r}{2}$. 
Here $\nu'$ denotes the reduced norm of $\mathbb{H}$ at infinity and is equal to the square root of the absolute value. 

Section 1.3 from \cite{5} tells us that Jacquet-Langlands correspondence in this case will be $\ind_{P_{2,2}(\R)}^{\GL_4(\R)}(\xi_s \times \xi_s^{-1})$ where $|\textbf{C}|(\chi'_s) = \xi_s$ and $|\textbf{C}|(\chi_s^{\prime -1}) = \xi_s ^{-1}$. 
Here $\xi_s, \xi_s ^{-1}$ are characters of $\GL_2(\R)$ with $\xi_s = \chi_s \circ \det$ and $\chi_s (a) = |a|^s$ for $s = \frac{\sqrt{-1}r}{2}$. 
Therefore, $\Pi_\infty = \ind_{P_{2,2}(\R)}^{\GL_4(\R)}(\xi_s \times \xi_s^{-1}).$ 
Now, $\ind_{P_{2,2}(\R)}^{\GL_4(\R)}(\xi_s \times \xi_s^{-1})$ is the irreducible quotient of $\ind_{P_{2,2}(\R)}^{\GL_4(\R)}(\tau_s \times \tau_{-s})$ where $\tau_s = \ind_{\Bg(\R)}^{\GL_2(\R)}(| |^{1/2}\chi_s \times ||^{-1/2}\chi_s)$. Therefore, we get
\begin{align*}
\ind_{P_{2,2}(\R)}^{\GL_4(\R)}(\tau_s \times \tau_{-s}) & \simeq \ind_{\BG(\R)}^{\GL_4(\R)}(||^{1/2}\chi_s\times ||^{-1/2}\chi_s\times ||^{1/2}\chi_{-s} \times ||^{-1/2}\chi_{-s}) \\
&\simeq \ind_{\BG(\R)}^{\GL_4(\R)}(||^{1/2}\chi_s\times ||^{1/2}\chi_{-s}\times ||^{-1/2}\chi_{s} \times ||^{-1/2}\chi_{-s}) \\
& \simeq \ind_{P_{2,2}(\R)}^{\GL_4(\R)}(\nu^{1/2}\sigma_s \times \nu^{-1/2}\sigma_s)
\end{align*}
where $\sigma_s = \ind_{\Bg(\R)}^{\GL_2(\R)}(\chi_s \times \chi_{-s})$ and $\nu = |\det|$.

However, we know from global Jaquet Langlands of Section 5.2 that $\Pi_\infty$ is also of the form $MW(\sigma_\infty,2)$ which is the irreducible quotient of $\ind_{P_{2,2}(\R)}^{\GL_4(\R)}(\nu^{1/2}\sigma_\infty \times \nu^{-1/2}\sigma_\infty)$. Hence, by uniqueness, we get $\sigma_\infty = \sigma_s$ with the required form, proving the claim.
	
For the case $p=2$, let $\rho$ and $\rho'$ be unitary representations of $\GL_2(\Q_2)$ and $B_2^\times$ respectively.
At prime $p=2$, Theorem 3.2 from \cite{4} tells us $\textbf{C}(\overline{u}(\rho',k)) = u(\rho,k)$ where
 $$u(\rho,k) = Lg(\Pi_{i=0}^{k-1}\nu^{(k-1)/2 - i}\rho), \quad \overline{u}(\rho',k) = Lg(\Pi_{i=0}^{k-1}\nu'^{(k-1)/2 - i}\rho')$$
 with $Lg$ denoting the unique irreducible quotient. Here, $\nu = |\det|$ and $\nu'$ is the reduced norm. 
 In our case, we have $u(\rho,k) = \Pi_2$ and $\overline{u}(\rho',k) = \Pi_{F,2}$.
 We know $\Pi_{F,2}$, unlike at odd primes $p$, is a representation of $\GL_2(B_2)$. On the other hand have $\Pi_2 =\MW(\sigma_2,2)$, hence $k = 2$. Therefore,  $\Pi_{F,2} =  \text{Ind}_{\mathcal{B}_2(\Q_p)}^{\GL_2(B_2)}(\chi_1 \times \chi_2) = \MW(\rho',2)$. Hence, $\rho'$ is a one dimensional representation of $B^\times$ given by a character $\chi' = \chi \circ \nu'$ for an unramified character $\chi$ of $\Q_2^\times$. Comparing with Proposition 5.2, we get $\chi'(\varpi_2) = -\epsilon$. According to Section 56 of \cite{11} such a character corresponds to twisted Steinberg representation $\chi St$ of $\GL_2(\Q_2)$. Hence, $\sigma_2 = \chi St$ with $\chi(2) = - \epsilon $.

\end{proof}

\subsection{Distinguished vector in $\sigma$}

From Section 5.3, we know that $\sigma_p$ is unramified principal series at every prime $p \neq 2$. Hence, the new vector at every prime $p\neq \{2,\infty\}$ is the unique spherical vector $\psi_p$ stable under $K_p = \GL_2(\Z_p)$. At $p= \infty$, we have the unique weight zero fixed vector $\psi_\infty$ which is stable under $K_\infty = O_2(\R)$. 

At $p=2$ the representation is $\chi St$, an unramified twist of Steinberg by the character $\chi$, as in Proposition \ref{Prop5.4} and hence the conductor is $2$. 
Therefore, the new vector $\psi_2$ is invariant under $$K_2 = \left\{\begin{bmatrix}
a & b \\
c & d \\
\end{bmatrix}\Big| a,d\in \Z_2^\times, b\in \Z_2, c\in 2\Z_2 \right\}.$$

Let $\psi = \otimes'_{p\leq \infty}\psi_p \in V_\sigma$.
It satisfies 
$$ \psi(z\gamma g k) = \psi(g) \quad  \text{ for } \gamma \in \GL_2(\Q), z \in Z(\GL_2(\A)), k \in \Pi_{p \leq \infty} K_p.$$

For $g_\infty = \begin{bmatrix}
a & b\\
c & d
\end{bmatrix} \in \SL_2(\R)$, let $g_\infty(i) = \frac{ai+b}{ci+d} = \tau \in \mathfrak{h}$. Consider the function $f_\psi : \mathfrak{h} \rightarrow \C$ associated to $\psi$ defined as $f_\psi(\tau) = f_\psi(g_\infty(i)) = \psi(g_\infty \otimes'_{p<\infty} 1_p)$ where $1_p$ is the identity of $\GL_2(\Q_p)$. Then, for $\gamma \in \Gamma_0(2)$ we have
\begin{align*}
f_\psi(\gamma(\tau)) &= \psi((\gamma g_\infty)\otimes'_{p<\infty}1_p)\\
&= \psi((\otimes'_{p \leq \infty}\gamma^{-1})((\gamma g_\infty)\otimes'_{p<\infty}1_p)) \qquad \qquad &\because \otimes'_{p \leq \infty} \gamma^{-1} \in \GL_2(\Q)\\
&=\psi(g_\infty \otimes'_{p<\infty}\gamma^{-1})\\
&=\psi((g_\infty \otimes'_{p<\infty}1_p)k) \qquad \qquad & k= (1\otimes'_{p<\infty}\gamma^{-1})\\
&=\psi(g_\infty \otimes'_{p<\infty}1_p)  &\because k\in \Pi_{p \leq \infty}K_p\\
&=f_\psi(\tau)
\end{align*}

Hence $f_\psi$ is invariant under the action of $\Gamma_0(2)$. 
Since the local representation $\sigma_\infty$ at $p=\infty$ associated with the vector $\psi_\infty$  is principal series, the function $f_\psi$ is a Maass form.

Following Lemma 9 from \cite{14} for $n=1$, the map $\psi \rightarrow f_\psi$ is Hecke equivariant. The structure of $\sigma_p$ from Section 5.3 allows us to find the Hecke eigenvalues for $f_\psi$ at all odd prime $p < \infty$. Following Proposition 3.1.2 of \cite{12}, the function $f_\psi$ is an eigenfunction of the Atkin-Lehner involution with eigenvalue $-\chi(2) = \epsilon$ from \eqref{5.4} and Hecke eigenvalue $\lambda_2 = \chi(2) = -\epsilon$. By Proposition 4.6.6 of \cite{2}, the Hecke eigenvalue for odd primes $p$ with $\sigma_p = \ind_{\Bg(\Q_p)}^{\GL_2(\Q_p)}(\chi_p \times \chi_p ^{-1})$ would be $(\chi_{p}(p) + \chi_{p}^{-1}(p)) = \lambda_p$. Note that we are using the action of the Hecke algebra as in (30) of \cite{14} here, hence the lack of $p^{1/2}$.

The eigenvalue for the hyperbolic Laplacian is obtained from the Hecke eigenvalue at infinity as by Proposition 2.5.4 from \cite{2}. Following the notation from Bump \cite{2}, in this case, $s_1 = \frac{\sqrt{-1}r}{2}$ and $s_2 = -\frac{\sqrt{-1}r}{2}$. Hence, $s = \frac{1}{2}(\sqrt{-1}r + 1) = \frac{1+ \sqrt{-1}r}{2} $. Then, the eigenvalue for the Laplacian is given by $s(1-s) = \left(\frac{1}{4} + \frac{r^2}{4}\right)$. Hence, $f_\psi$ belongs to $\SG$ as required. We will do an additional verification that this $f_\psi$ does indeed lift to $F$. This will complete the proof of Theorem \ref{Thm5.1}.

\subsection{Fourier coefficients of $f_\psi$}
Let $N = 4^a b$, where $a,b$ are non-negative integers and $4 \nmid b $. For $F\in \Ms$ we can define a sequence of numbers
\begin{equation}
c(-N) \coloneqq \frac{A(2N,u,1)}{\sqrt{2N}} + \epsilon \frac{A(N,u-1,1)}{\sqrt{N}} \label{CN}
\end{equation}
for every $N>0$, where \[ 
u = 
\begin{dcases}
2a & \text{if }b \equiv 1,3 \mod (4) \\
2a+1 & \text{if }b \equiv 2 \mod (4) \\ 
\end{dcases}
\]
It can be proved that this sequence of numbers $\{c(-N)\}$ satisfy \eqref{3.3} in terms of $K,u$ and $n$ just by reversing the argument in proof of Theorem \ref{Th1}.
\begin{align*}
A(K,u,&n)= \sum_{d \mid n} d A(K/d^2,u,1)
\\=& \sqrt{K} \sum_{d \mid n} \frac{A(K/d^2,u,1)}{\sqrt{K/d^2}}
\\=&\sqrt{K} \sum_{d \mid n} \Bigg(\sum_{t=0}^u \Bigg( (-\epsilon)^t	 \frac{A((K/d^2)/2^{t},u-t,1)}{\sqrt{(K/d^2)/2^{t}}} \\
&- (-\epsilon)^{t+1} \frac{A((K/d^2)/2^{t+1},u-(t+1),1)}{\sqrt{(K/d^2)/2^{t+1}}}\Bigg)\Bigg)
\\=& \sqrt{K} \sum_{t=0}^u \sum_{d \mid n} (-\epsilon)^t {\left(\frac{A(K/(2^{t}d^2),u-t,1)}{\sqrt{K/(2^{t}d^2)}} + \epsilon \frac{A(K/(2^{t+1}d^2),u-t-1,1)}{\sqrt{K/(2^{t+1}d^2)}}\right)}
\\ =&\sqrt{K} \sum_{t=0}^u \sum_{d \mid n} (-\epsilon)^t c\left(-\frac{K}{2^{t+1}d^2}\right) 
\end{align*}
We will show that $f_\psi \in \SG$ lifts to $F$ by showing that these $\{c(N)\}$ are the coefficients of $f$ for all $N<0.$ 
This is to say, if $c_\psi(N)$ are the Fourier coefficients of $f_\psi$, then $c(-N) = c_\psi(-N)$ for all $N>0$. 

For every odd prime $p$, since $f_\psi$ is a Hecke eigenform, $c_\psi(N)$ satisfy 
\begin{equation}
p^\frac{1}{2} c_\psi(pN) + p^{\frac{-1}{2}} c_\psi(N/p)  = \lambda_p c_\psi(N). \label{5.5}
\end{equation}
by equation (5.7) of \cite{MNP}, with $c_\psi(N/p) = 0$ if $p\nmid N$. Rewriting this, we get
\begin{equation}
c_\psi(pN) = p^{-1/2}\lambda_p c_\psi(N) - p^{-1} c_\psi(N/p). \label{5.6}
\end{equation}
Since $f$ is a Hecke eigenform at prime $p=2$, from equation (5.6) of \cite{MNP}, its Fourier coefficients also satisfy 
\begin{equation}
c_\psi(2N) = -\frac{\epsilon}{2}c_\psi(N).	\label{5.7}
\end{equation}

Equations \eqref{5.6} and \eqref{5.7} together allows us to write $c_\psi(-N)$ in terms of $c_\psi(-1)$, $\lambda_p$ and $\epsilon$ for all $N$. This also shows that $c_\psi(-1)$ is in fact non-zero. 

\begin{lem}
The sequence of numbers $\{c(N)\}$ as defined in \eqref{CN} satisfy equations \eqref{5.5} and \eqref{5.7}
\end{lem}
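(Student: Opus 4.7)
The plan is to verify each recurrence directly from the definition \eqref{CN} of $c(-N)$, reducing everything to identities for the Fourier coefficients $A(K,u,n)$ of $F$ that are either the recurrences of Definition \ref{rec} or the odd-prime Hecke identities extracted in the proof of Proposition \ref{prop5.1}.

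The first step I would take is to track the $u$-index under multiplication and division by a prime. Writing $N = 4^a b$ with $4\nmid b$, a short case-check on $b \bmod 4$ shows that $u_{2N} = u_N + 1$: if $b \equiv 1,3 \pmod 4$, then $u_N = 2a$ and $2N = 4^a(2b)$ with $2b \equiv 2 \pmod 4$, so $u_{2N} = 2a+1$; if $b \equiv 2 \pmod 4$, then $u_N = 2a+1$ and $2N = 4^{a+1}(b/2)$ with $b/2$ odd, so $u_{2N} = 2a+2$. For an odd prime $p$ the 2-adic part of $N$ is unchanged under multiplication or division by $p$, hence $u_{pN} = u_{N/p} = u_N$.

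For equation \eqref{5.7}, writing $u := u_N$, substitution gives
\[
c(-2N) = \frac{A(4N, u+1, 1)}{2\sqrt{N}} + \epsilon\,\frac{A(2N, u, 1)}{\sqrt{2N}}, \qquad c(-N) = \frac{A(2N, u, 1)}{\sqrt{2N}} + \epsilon\,\frac{A(N, u-1, 1)}{\sqrt{N}}.
\]
Forming $c(-2N) + \tfrac{\epsilon}{2} c(-N)$ and clearing denominators via $2\sqrt{N} = \sqrt{2}\sqrt{2N}$ reduces the claim to
\[
A(4N, u+1, 1) + \tfrac{3\epsilon}{\sqrt{2}}\,A(2N, u, 1) + A(N, u-1, 1) = 0,
\]
which is exactly relation \eqref{2a} of Definition \ref{rec} at $K = 4N$, $n=1$ with $u$ there replaced by $u+1$ (the convention $A(\,\cdot\,,-1,1) = 0$ handles the boundary case $a = 0$, $b$ odd).

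For equation \eqref{5.5} with an odd prime $p$, substitution into the left-hand side together with the identities $p^{1/2}/\sqrt{2pN} = p^{-1/2}/\sqrt{2N/p} = 1/\sqrt{2N}$, and the analogue with $2N$ replaced by $N$, reduce the claim to the pair
\[
A(2pN, u, 1) + A(2N/p, u, 1) = \lambda_p\, A(2N, u, 1), \qquad A(pN, u-1, 1) + A(N/p, u-1, 1) = \lambda_p\, A(N, u-1, 1),
\]
with the convention $A(M/p, \cdot, 1) = 0$ when $p \nmid M$. These are the odd-prime Hecke identities $A(pM, u', 1) + A(M/p, u', 1) = \lambda_p A(M, u', 1)$ for $F$; the proof of Proposition \ref{prop5.1} writes them out for $u' = 0$, but the Lemma \ref{l51} enumeration of $\sum_\alpha A(\beta\bar\alpha^{-1})$ and $\sum_\alpha A(\bar\alpha\beta)$ goes through verbatim for arbitrary $u'$, since any $\alpha \in C_p$ has reduced norm $p$ and hence preserves the $\varpi_2$-adic valuation of $\beta$.

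The main obstacle is the careful bookkeeping of the $u$-indices in the \eqref{5.7} reduction, together with explicitly recording that the odd-prime Hecke identities from Proposition \ref{prop5.1} extend from $u' = 0$ to arbitrary $u'$; once both points are in place, the two recurrences drop out as direct manipulations of the Maass relations for $A(K,u,n)$.
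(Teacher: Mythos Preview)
Your proposal is correct and follows essentially the same approach as the paper: both arguments reduce \eqref{5.5} to the odd-prime Hecke identity $A(pM,u',1)+A(M/p,u',1)=\lambda_p A(M,u',1)$ (noting, as the paper also does, that the computation of Proposition~\ref{prop5.1} goes through for arbitrary $u'$, not just $u'=0$), and reduce \eqref{5.7} to the Maass relation \eqref{2a} applied at $K=4N$. Your explicit bookkeeping of the $u$-indices ($u_{2N}=u_N+1$, $u_{pN}=u_N$) is left implicit in the paper but is exactly what is needed.
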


\begin{proof}
To prove $\{c(N)\}$ satisfy \eqref{5.5}, we will use that 
$$\lambda_p = \frac{A(p^{n+1}K_0,u,1) + A(p^{n-1}K_0,u,1)}{ A(p^nK_0,u,1)}.$$
Note that this statement is slightly different than our claim in \eqref{lambda p} since we no longer assume $u=0$. This statement is still true however, since the computation of Fourier coefficients under the action of the Hecke algebra in Proposition \ref{prop3.1} (b) holds true for any general $A(K,u,n)$ and not just with $A(K,0,1)$ as we used for Proposition \ref{prop5.1}. It can also be calculated explicitly via the same argument as in proof of Proposition \ref{prop5.1}. In the computation below, we assume $c(-N/p)=0$ and $A(N/p,u-1,1)=0$ are $0$ if $p\nmid N$.
\begin{align*}
p^\frac{1}{2} c(-pN) + p^\frac{-1}{2} c(-N/p) =&  \left(p^{1/2}\frac{A(2pN,u,1)}{\sqrt{2pN}} + \epsilon p^{1/2} \frac{A(pN,u-1,1)}{\sqrt{pN}} \right) \\
 & + \left( p^{-1/2}\frac{A(2N/p,u,1)}{\sqrt{2N/p}} + \epsilon p^{-1/2} \frac{A(N/p,u-1,1)}{\sqrt{N/p}}\right)\\
=& \frac{A(2pN,u,1)}{\sqrt{2N}} + \epsilon \frac{A(pN,u-1,1)}{\sqrt{N}} \\ & + \frac{A(2N/p,u,1)}{\sqrt{2N}} + \epsilon \frac{A(N/p,u-1,1)}{\sqrt{N}}\\
=& \left( \frac{A(2pN,u,1) + A(2N/p,u,1)}{\sqrt{2N}} \right) \\ 
& + \left( \epsilon \frac{A(pN,u-1,1)+A(N/p,u-1,1) }{\sqrt{N}} \right)\\
=&  \frac{\lambda_p A(2N,u,1)}{\sqrt{2N}} + \epsilon \frac{\lambda_p A(N,u-1,1)}{\sqrt{N}}\\
=& \lambda_p \Big(\frac{A(2N,u,1)}{\sqrt{2N}} + \epsilon \frac{A(N,u-1,1)}{\sqrt{N}}\Big)\\
=& \lambda_p c(-N)
\end{align*}
Thus, $\{c(-N)\}$ satisfy \eqref{5.5}. To show equation \eqref{5.7}, we use the condition \eqref{2a} from Definition \ref{rec}. From $c(-N)$ as in \eqref{CN}, we get
\begin{align*}
c(-2N) =& \frac{A(4N,u+1,1)}{\sqrt{4N}} + \epsilon \frac{A(2N,u,1)}{\sqrt{2N}}\\
=& \left(\frac{-3\epsilon}{\sqrt{2}}\frac{A(2N,u,1)}{\sqrt{4N}} - \frac{A(4N,u-1,1)}{\sqrt{4N}}\right) + \epsilon\frac{A(2N,u,1)}{\sqrt{2N}}\\
=& \frac{-3\epsilon}{2}\frac{A(2N,u,1)}{\sqrt{2N}} + \epsilon\frac{A(2N,u,1)}{\sqrt{2N}} - \frac{A(N,u-1,1)}{2\sqrt{N}}\\
=& \frac{-\epsilon}{2}\frac{A(2N,u,1)}{\sqrt{2N}} - \frac{1}{2}\frac{A(N,u-1,1)}{\sqrt{N}}\\
=& \frac{-\epsilon}{2}c(-N)
\end{align*}

\end{proof}
Since $c(N)$ satisfy \eqref{5.5} and \eqref{5.7}, $c(-1)$ is also not $0$. Then, we can normalize the Fourier coefficients $c_\psi(-N)$ so that $c_\psi(-1)= c(-1)$. Since both $\{c(-N)\}$ and $\{c_\psi(-N)\}$ satisfy \eqref{5.5} and \eqref{5.7}, this implies $c(-N) = c_\psi(-N)$ for all $N$. Therefore, the Fourier coefficients of $f_\psi$ satisfy \eqref{3.3} and hence, it is a Hecke eigenform in $\SG$ whose lift $F_f =F$, completing the proof of Theorem \ref{Thm5.1}.

\section{Final Result}
We would like to generalize the result of Theorem $\ref{Thm5.1}$ to all $F \in \Ms$. We will do so by proving that $\MG$ has a Hecke eigenbasis and showing that the Maass space $\Ms$ is stable under the action of all the Hecke operators given in Proposition $\ref{prop3.1}$. If $\Ms$ is stable, then it has a Hecke eigenbasis $\{F_i\}$ which are lifts of some $\{f_i\}$ for $f_i \in \SG$ by Theorem \ref{Thm5.1}. Then by linearity of the defining condition $\eqref{3.3}$, $F = \sum_i a_i F_i$ would be a lift of $\sum_i a_i f_i \in \SG$. Let $\Gamma \subset \GL_2(\mathcal{O})$ be a subgroup of finite index. For Maass forms $F,G$ over $\mathcal{M}(\Gamma,r)$ with one of them cuspidal, we can define their Petersson inner product by 
\begin{equation}
\langle F,G \rangle = \frac{1}{Vol(\Gamma\backslash\GL_2(\mathcal{O}))} \int_{\Gamma\backslash \GL_2(\HH)/Z^+K}F(g)\overline{G(g)}dg \label{Pet}
\end{equation}
where the Haar measure $dg$ is given by $\frac{dxdy}{y^2}$ when $g = \begin{bmatrix}
y & x \\
0 & 1 \\
\end{bmatrix}$ as in Section 2.1.

\begin{prop}
	$\MG$ has a basis of forms that are simultaneous eigenvectors of the Hecke algebra $\otimes \mathcal{H}(G_p,K_p)$ and the subspace of cusp forms has an orthogonal basis of Hecke eigenfunctions with respect to the Petersson inner product on the 5-dimensional hyperbolic as in \eqref{Pet}

\end{prop}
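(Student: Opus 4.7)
The plan is to follow the classical spectral-theoretic strategy: first show $\MG$ is finite dimensional, then show the Hecke operators form a commuting family of normal operators on the cuspidal subspace under the Petersson inner product, and finally invoke the spectral theorem to obtain the desired simultaneous orthogonal eigenbasis.

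For finite-dimensionality, I would observe that every $F \in \MG$ is a joint eigenfunction of the Casimir $\Omega$ with the single fixed eigenvalue $-\tfrac{1}{2}(r^2/4+1)$, and $\GL_2(\mathcal{O}) \backslash \GL_2(\HH)/Z^+ K$ has finite volume. The standard spectral theory for the invariant Laplacian on a finite-volume locally symmetric space then yields that each Laplace eigenspace inside the $L^2$-discrete automorphic spectrum is finite-dimensional; combined with the moderate-growth condition in the definition of $\MG$, this reduces the problem to linear algebra on a finite-dimensional space.

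Next I would record that the spherical Hecke algebra $\mathcal{H}(G_p,K_p)$ is commutative for every $p$: for odd $p$ this follows because $(\GL_2(B_p),\GL_2(\mathcal{O}_p)) \simeq (\GL_4(\Q_p),\GL_4(\Z_p))$ is a Gelfand pair, and at $p=2$ Satake's description (the generators $\varphi_1^{\pm 1},\varphi_2$ recalled in Section 3) directly exhibits commutativity. Thus $\bigotimes_p \mathcal{H}(G_p,K_p)$ acts on the finite-dimensional space $\MG$ by pairwise commuting operators. The main technical step is to check normality on the cuspidal subspace: for a generator $T=[K_p h K_p]$ with decomposition $K_p h K_p=\bigsqcup_i g_i K_p$, a direct unfolding against the Petersson product \eqref{Pet} shows $T^\ast = [K_p h^{-1} K_p]$ up to a central twist. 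For the explicit representatives $h_i$ from Section 3, one checks that $h_i^{-1}$ lies in a double coset belonging to the same commutative algebra (essentially via the main involution $x\mapsto \bar x$ of the quaternion algebra together with central scalars), so that $T^\ast$ itself is a Hecke operator in $\bigotimes_p \mathcal{H}(G_p,K_p)$. This makes every Hecke operator normal.

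Having a commuting family of normal operators on a finite-dimensional inner-product space (the cuspidal subspace of $\MG$), the spectral theorem produces an orthogonal basis of simultaneous Hecke eigenvectors, which is the second claim. For the full space $\MG$, I would decompose into its cuspidal part and the complementary piece (coming from residues/values of Eisenstein series on the finite-volume quotient), and on each Hecke-stable summand apply simultaneous diagonalization of the commuting Hecke operators to obtain the first claim; orthogonality is not asserted outside the cuspidal piece. The step I expect to be the main obstacle is the explicit adjoint computation at the ramified prime $p=2$, since the quaternionic double-coset calculus for $\GL_2(B_2)$ is less standard than the $\GL_4(\Q_p)$ case; I would handle this by translating back through the local Jacquet--Langlands picture used repeatedly in this paper, or by a direct coset enumeration using the uniformizer $\varpi_2=1+i$ and the main involution of $\HH$.
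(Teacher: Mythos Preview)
Your proposal is correct and follows essentially the same strategy as the paper: finite-dimensionality (the paper cites Harish-Chandra directly), commutativity of each $\mathcal{H}(G_p,K_p)$ (the paper cites Satake uniformly), and an explicit adjoint computation showing each Hecke generator has adjoint equal to another Hecke generator, followed by the spectral theorem.

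Two small remarks on where your expectations diverge from the paper's execution. First, for the adjoint computation the paper does not use the quaternionic main involution; instead it takes a central element $z$ and the long Weyl element $w$ (the antidiagonal permutation matrix in $K_p$) and checks directly that $K_p h_i K_p = K_p\, w z h_j^{-1} w\, K_p$ for appropriate $i,j$, yielding $T_{p,2}^\ast=T_{p,4}$, $T_{p,3}^\ast=T_{p,3}$, $T_{p,4}^\ast=T_{p,2}$ at odd $p$. Second, the ramified place $p=2$ that you flag as the main obstacle is in fact the easiest case: with $z=\mathrm{diag}(\varpi_2,\varpi_2)$ and $w=\begin{bsmallmatrix}0&1\\1&0\end{bsmallmatrix}\in K_2$ one has $K_2 g K_2 = K_2\, w z g^{-1} w\, K_2$ immediately, so both $T_{2,1}$ and $T_{2,2}$ are self-adjoint and no Jacquet--Langlands translation is needed.
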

\begin{proof}
	The Hecke algebra acting on $\MG$ is $\otimes \mathcal{H}(G_p,K_p)$ as in Section 5.2 of \cite{MNP}. By Theorem 6 from Section 8 of \cite{Sat}, the algebra $\mathcal{H}(G_p,K_p)$ is commutative for every $p$. 
	
	By Theorem 1 on page 8 of \cite{HC}, dim$(\MG) < \infty$. This means $\MG$ is a finite dimensional vector space with a commutative algebra $\otimes \mathcal{H}(G_p,K_p)$ of operators acting on it. The final step in the proof is to show that the operators commute with their adjoint with respect to the Petersson inner product.
	
	Let $g$ be such that $K_pgK_p$ is one of the generators of the Hecke algebra $\mathcal{H}(G_p,K_p)$ which according to Proposition \ref{prop3.1} are $h_1,h_2,h_3$ and $h_4$ for odd $p$ and 
	$\begin{bmatrix}
		\varpi_2 & 0\\
		0 & \varpi_2
	\end{bmatrix}$ and $\begin{bmatrix}
		\varpi_2 & 0\\
		0 & 1
	\end{bmatrix}$ for $p=2$. Let $K_pgK_p = \sqcup_i K_pg_i = \sqcup_i g_iK_p$. Then $K_pg^{-1}K_p = \sqcup_i K_p g_i^{-1}$ and $K_pg^{-1}zK_p = \sqcup_i K_p g_i^{-1}z$ for $z\in Z$ an element of the center. Let $F,G$ be cusp forms in $\MG$. To find the adjoint operator under the Petersson inner product of \eqref{Pet}, we can look at
	
\begin{align*}
 \langle T(g)F,G\rangle = \sum_i \langle F|_{g_i},G\rangle = & \sum_i \langle F|_{g_i},G|_z\rangle \\
  = & \sum_i \langle F|_{g_i},G|_z|_{g_i^{-1}}|_{g_i}\rangle\\
  = & \sum_i \langle F,G|_{zg_i^{-1}}\rangle\\
  = & \sum_i \langle F,T(zg^{-1})G\rangle.
\end{align*}
Hence, to show that the Hecke operators commute with their adjoint, it suffices to show that $T(zg^{-1})$ is a generator up to a element of the center. Note that while $F,G$ are cusp forms with respect to $\GL_2(\mathcal{O})$, the forms $G|_{zg^{-1}}$ might be modular only with respect to some smaller subgroup $\Gamma$ hence we define Petersson inner product for general subgroup $\Gamma$ rather than just $\GL_2(\mathcal{O})$.  

Now, taking $z = \begin{bmatrix}
\varpi_2 & 0\\
0 & \varpi_2
\end{bmatrix}$ and the Weyl group element $w = \begin{bmatrix}
0 & 1\\
1 & 0
\end{bmatrix} \in K_2$, we can see that $K_2gK_2 = K_2wzg^{-1}wK_2$. Similarly, taking $z = \begin{bsmallmatrix}
p & & & \\
 & p & & \\
 & & p & \\
 & & & p 
\end{bsmallmatrix}$ and $w = \begin{bsmallmatrix}
 & & & 1\\
 & & 1 & \\
 & 1 & & \\
 1 & & & 
\end{bsmallmatrix} \in K_p$, we can show that $K_ph_2K_p = K_pwzh_4^{-1}wK_p$, $K_ph_3K_p = K_pwzh_3^{-1}wK_p$ and $K_ph_4K_p = K_pwzh_2^{-1}wK_p$. Letting $T_{p,i}$ denote the Hecke operator of $K_ph_iK_p$, this shows 
$$T^*_{2,1}=T_{2,1}, \quad T^*_{2,2}=T_{2,2}, $$ and
$$T^*_{p,1}=T_{p,1}, \quad T^*_{p,2}=T_{p,4}, \quad T^*_{p,3}=T_{p,3}, \quad T^*_{p,4}=T_{p,2}$$
for every prime odd $p>2$. 
\end{proof}
\begin{thm} 
	The following are equivalent.
	\begin{enumerate}
		\item $F$ is a lift from an Atkin-Lehner eigenform $f \in \SG$ with eigenvalue $\epsilon \in \{\pm 1 \}$ and which is a Hecke eigenform at $p=2$.
		\item $F$ is an element of the space $\Ms$
	\end{enumerate}
\end{thm}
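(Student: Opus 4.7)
The implication (1) $\Rightarrow$ (2) is exactly Theorem \ref{Th1}, so all the real content lies in the converse. My plan is to reduce a general $F \in \Ms$ to a finite linear combination of cuspidal Hecke eigenforms lying in $\Ms$ and then apply Theorem \ref{Thm5.1} to each summand. By the preceding proposition, the cuspidal subspace of $\MG$ is finite dimensional and admits an orthogonal basis $\{F_i\}$ of simultaneous Hecke eigenforms for $\otimes_p \mathcal{H}(G_p,K_p)$. If I can show that $\Ms$ is stable under every Hecke generator, then $\Ms$ is itself a direct sum of Hecke eigenspaces, so the expansion $F = \sum_i a_i F_i$ has $a_i F_i \in \Ms$ for every nonzero summand. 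Theorem \ref{Thm5.1} then produces Hecke eigenforms $f_i \in \SG$ with $F_i = F_{f_i}$, and setting $f := \sum_i a_i f_i$, the linearity of the defining formula \eqref{3.3} yields $F = F_f$. The common Atkin-Lehner eigenvalue $\epsilon$ is forced across the nonzero summands because condition \eqref{2a} pins down the $T_{2,2}$-eigenvalue as $-3\sqrt{2}\epsilon$.

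The task therefore reduces to verifying, for every generator $T$ of the Hecke algebra and every $F \in \Ms$, that $TF$ again satisfies the three conditions of Definition \ref{rec}. Condition (1) is the easiest. Proposition \ref{prop3.1} writes $(TF)_\beta$ as a finite linear combination of coefficients $A(\beta')$ with $\beta'$ of the form $\beta\varpi_2^{\pm 1}$, $p^{\pm 1}\beta$, $\bar\alpha^{\pm 1}\beta$, $\beta\bar\alpha^{\pm 1}$ or $\alpha_1^{-1}\beta\alpha_2$. Combining Lemma \ref{l51} with the hypothesis that $A(\beta)$ depends only on $(K,u,n)$, each value $A(\beta')$ depends only on $|\beta|^2$, $u$, $n$ and the combinatorial Hecke data, and hence so does $(TF)_\beta$. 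Condition \eqref{2a} then comes for free: it is equivalent to $F$ being a $T_{2,2}$-eigenform with eigenvalue $-3\sqrt{2}\epsilon$, and since the full Hecke algebra is commutative, $TF$ is again such an eigenform with the same $\epsilon$.

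The main obstacle, and the step where the strategy could plausibly fail, is condition \eqref{2b} for $TF$. For each generator I would expand both $(TF)(K,u,n)$ and $\sum_{d \mid n} d\cdot (TF)(K/d^2,u,1)$ via Proposition \ref{prop3.1} and match them using condition \eqref{2b} applied to $F$, exactly as in the proof of Proposition \ref{prop5.1}, where \eqref{2b} was invoked to rewrite expressions like $A(p^{n+1}K_0,0,p) = pA(p^{n-1}K_0,0,1) + A(p^{n+1}K_0,0,1)$. The computation splits by the prime $p$ (odd versus $p=2$), by the specific generator $\phi_i$ or $\varphi_j$, and further into subcases according to whether $p \mid K$ and whether $p \mid n$; in each subcase one must carefully track the $\varpi_2^{u'}n'\beta_0'$ decomposition of $\bar\alpha\beta$, $\beta\alpha$ and $\alpha_1^{-1}\beta\alpha_2$ using Lemma \ref{l51}. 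Once this bookkeeping is carried out for each of $\varphi_1, \varphi_2, \phi_1, \phi_2, \phi_3, \phi_4$, Hecke stability of $\Ms$ is established and the theorem follows from the reduction described in the first paragraph.
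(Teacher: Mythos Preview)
Your proposal is correct and follows essentially the same route as the paper: reduce (2) $\Rightarrow$ (1) to Hecke stability of $\Ms$, then invoke the finite-dimensional eigenbasis proposition together with Theorem \ref{Thm5.1} and the linearity of \eqref{3.3}. The paper's proof carries out exactly this plan, handling conditions (1) and \eqref{2a} by the arguments you indicate and treating condition \eqref{2b} by the case-by-case expansion you describe; it presents in full only the most intricate case, namely $T_{p,3}$ (your $\phi_3$, i.e.\ $h_3$) with $\beta$ such that $A(\beta)=A(p^mK,u,p^{\ell}n)$, $p\nmid Kn$, $m>2\ell+1$, and asserts that the remaining generators and subcases are analogous.
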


\begin{proof}
	As mentioned before, it is enough to show that $\Ms$ is stable under the action of the Hecke Algebra. We will prove that for any Hecke operator $T_{p,i}$ and any $F \in \Ms$, the image of the action $T_{p,i}F \in \Ms$ by showing $T_{p,i}F$ satisfies all the conditions of Definition \ref{rec}. 
	
	Condition 1 of Definition \ref{rec} follows from the fact that we can write the coefficients $A'(\beta)$ of $T_{p,i}F$ in terms of $A(K,u,n)$ using Proposition \ref{prop3.1} following argument similar to proof of Proposition \ref{prop5.1}.
	We showed in the proof of Proposition \ref{Prop5.2} that condition $\ref{2a}$ is actually equivalent to $F$ being a Hecke eigenform at prime $p=2$. Hence, $T_{2,1}F = -(3\sqrt{2}\epsilon) F$ for all $F \in \Ms$. 
	Checking the recurrence relations for $T_{p,i}F$ where $p$ is an odd prime requires computation. We will present the most complicated case for $T_{p,2} = h_3$ below with the assumption that $A(\beta) = A(p^mK,u,p^ln)$ where $p\nmid Kn$ and $m>2l+1$. Computation for other cases and other Hecke operators follow similarly. For ease of notation, we will refer to the Fourier coefficients of $T_{p,i}F$ as $T_{p,i}F(K,u,n)$. We will use the recurrence relation as 
	$$A(p^mK,u,p^ln)=\sum_{i=0}^{l}p^iA(p^{m-2i}K,u,n)$$
	 and show that 
	$$T_{p,2}F(p^mK,u,p^ln) = \sum_{i=0}^{l}p^iT_{p,2}F(p^{m-2i}K,u,n).$$ 
	Note, following similar argument as in proof of Proposition \ref{prop5.1}, we have
\begin{align*}
T_{p,2}F(p^mK,u,n) = & p^2(0)+p^2A(p^{m+2}K,u,pn) \\+&p(pA(p^mK,u,n)+pA(p^mK,u,n)+A(p^mK,u,pn))\\
= &p^2A(p^{m+2}K,u,pn)+2p^2A(p^mK,u,n)+pA(p^mK,u,pn).
\end{align*}
Hence,
\begin{align*}
\sum_{i=0}^l p^i T_{p,2}&F(p^{m-2i}K,u,n) =\\ =& \sum_{i=0}^l p^i (p^2A(p^{m+2-2i}K,u,pn) + 2p^2A(p^{m-2i}K,u,n) + pA(p^{m-2i}K,u,pn))\\
=&\sum_{i=0}^{l} p^i(p^2(A(p^{m+2-2i}K,u,n)+pA(p^{m-2i}K,u,n)) + 2p^2A(p^{m-2i}K,u,n) \\ 
+&p(A(p^{m-2i}K,u,n)+pA(p^{n-2-2i}K,u,n)))\\
=& \sum_{i=0}^{l}p^i(p^2A(p^{m-2-2i}K,u,n) +p^2A(p^{m+2-2i}K,u,n) \\ +& (p^3+2p^2+p)A(p^{m-2i}K,u,n)
\end{align*}

We compare that with
\begin{align*}
T_{p,2}F(p^mK,u,p^ln) = & p^2A(p^{m-2}K,u,p^{l-1}n)+p^2A(p^{m+2}K,u,p^{l+1}n)\\
+&  p(p^2A(p^mK,u,p^{l-1}n) + 2pA(p^mK,u,p^ln)\\ 
+& A(p^mK,u,p^{l+1}n))\\
= & p^2 \sum_{i=0}^{l-1}p^iA(p^{m-2-2i}K,u,n)+p^2\sum_{i=0}^{l+1}p^iA(p^{m+2-2i}K,u,n)\\
+& p^3\sum_{i=0}^{l-1}p^iA(p^{m-2i}K,u,n)+2p^2\sum_{i=0}^{l}p^iA(p^mK,u,n)\\
+& p\sum_{i=0}^{l+1}p^iA(p^mK,u,n)\\
= & p^2 \sum_{i=0}^{l-1}p^iA(p^{m-2-2i}K,u,n)+p^2\sum_{i=0}^{l}p^iA(p^{m+2-2i}K,u,n) \\ +&p^2p^{l+1}A(p^{m-2l}K,u,n) +p^3\sum_{i=0}^{l-1}p^iA(p^{m-2i}K,u,n)\\ +&2p^2\sum_{i=0}^{l}p^iA(p^mK,u,n)+ p\sum_{i=0}^{l}p^iA(p^mK,u,n)\\  
+& pp^{l+1}A(p^{m-2-2l}K,u,n).
\end{align*}

Rearranging the sum, we get 
\begin{align*}
T_{p,2}F(p^mK,u,p^ln)= & \sum_{i=0}^{l}p^i(p^2A(p^{m-2-2i}K,u,n) +\sum_{i=0}^{l}p^ip^2A(p^{m+2-2i}K,u,n) \\ 
+& \sum_{i=0}^{l}p^i(p^3+2p^2+p)A(p^{m-2i}K,u,n)\\
=&\sum_{i=0}^l p^i T_{p,2}F(p^{m-2i}K,u,n) 
\end{align*}
as required.
\end{proof}
\bibliographystyle{amsplain}

\end{document}